\def\ge{\geqslant}
\def\le{\leqslant}
\def\a{\alpha}
\def\b{\beta}
\def\g{\gamma}
\def\s{\sigma}
\def\t{\tau}
\def\th{\theta}
\def\k{\kappa}
\def\l{\lambda}
\def\i{^{-1}}
\def\rig{\text{rig}}
\def\<{\langle}
\def\>{\rangle}
\newcommand{\sA}{\ensuremath{\mathscr{A}}\xspace}
\newcommand{\sR}{\ensuremath{\mathscr{R}}\xspace}
\newcommand{\fka}{\ensuremath{\mathfrak{a}}\xspace}
\newcommand{\fkR}{\ensuremath{\mathfrak{R}}\xspace}
\def\Tr{\mathrm{Tr}}
\newcommand{\BC}{\ensuremath{\mathbb {C}}\xspace}
\newcommand{\BF}{\ensuremath{\mathbb {F}}\xspace}
\newcommand{{\BG}}{\ensuremath{\mathbb {G}}\xspace}
\newcommand{{\BK}}{\ensuremath{\mathbb {K}}\xspace}
\newcommand{\BN}{\ensuremath{\mathbb {N}}\xspace}
\newcommand{\BR}{\ensuremath{\mathbb {R}}\xspace}
\newcommand{\BS}{\ensuremath{\mathbb {S}}\xspace}
\newcommand{\BZ}{\ensuremath{\mathbb {Z}}\xspace}
\newcommand{\CI}{\ensuremath{\mathcal {I}}\xspace}
\newcommand{\CK}{\ensuremath{\mathcal {K}}\xspace}
\newcommand{\CO}{\ensuremath{\mathcal {O}}\xspace}
\newcommand{\CP}{\ensuremath{\mathcal {P}}\xspace}
\newcommand{\CQ}{\ensuremath{\mathcal {Q}}\xspace}
\newcommand{\Ad}{{\mathrm{Ad}}}
\DeclareMathOperator{\Gal}{Gal}
\DeclareMathOperator{\Hom}{Hom}
\let\Im\relax
\DeclareMathOperator{\Im}{Im}
\DeclareMathOperator{\rank}{rank}
\def\tW{\tilde W}
\def\kk{\mathbf k}
\newtheorem{theorem}{Theorem}
\newtheorem{theoremA}{Theorem}
\newtheorem{proposition}[theorem]{Proposition}
\newtheorem{lemma}[theorem]{Lemma}
\newtheorem{corollary}[theorem]{Corollary}
\theoremstyle{definition}
\newtheorem{remark}[theorem]{Remark}
\numberwithin{equation}{section}
\numberwithin{theorem}{section}
\renewcommand{\to}{%
   \ifbool{@display}{\longrightarrow}{\rightarrow}%
   }
\let\shortmapsto\mapsto
\renewcommand{\mapsto}{%
   \ifbool{@display}{\longmapsto}{\shortmapsto}%
   }
\newlength{\olen}
\newlength{\ulen}
\newlength{\xlen}
\newcommand{\xra}[2][]{%
   \ifbool{@display}%
      {\settowidth{\olen}{$\overset{#2}{\longrightarrow}$}%
       \settowidth{\ulen}{$\underset{#1}{\longrightarrow}$}%
       \settowidth{\xlen}{$\xrightarrow[#1]{#2}$}%
       \ifdimgreater{\olen}{\xlen}%
          {\underset{#1}{\overset{#2}{\longrightarrow}}}%
          {\ifdimgreater{\ulen}{\xlen}%
             {\underset{#1}{\overset{#2}{\longrightarrow}}}
             {\xrightarrow[#1]{#2}}}}%
      {\xrightarrow[#1]{#2}}
   }
\newcommand{\xyra}[2][]{%
   \settowidth{\xlen}{$\xrightarrow[#1]{#2}$}%
   \ifbool{@display}%
      {\settowidth{\olen}{$\overset{#2}{\longrightarrow}$}%
       \settowidth{\ulen}{$\underset{#1}{\longrightarrow}$}%
       \ifdimgreater{\olen}{\xlen}%
          {\mathrel{\xymatrix@M=.12ex@C=3.2ex{\ar[r]^-{#2}_-{#1} &}}}%
          {\ifdimgreater{\ulen}{\xlen}%
             {\mathrel{\xymatrix@M=.12ex@C=3.2ex{\ar[r]^-{#2}_-{#1} &}}}
             {\mathrel{\xymatrix@M=.12ex@C=\the\xlen{\ar[r]^-{#2}_-{#1} &}}}}}%
      {\mathrel{\xymatrix@M=.12ex@C=\the\xlen{\ar[r]^-{#2}_-{#1} &}}}%
   }
\newcommand{\xla}[2][]{%
   \ifbool{@display}%
      {\settowidth{\olen}{$\overset{#2}{\longleftarrow}$}%
       \settowidth{\ulen}{$\underset{#1}{\longleftarrow}$}%
       \settowidth{\xlen}{$\xleftarrow[#1]{#2}$}%
       \ifdimgreater{\olen}{\xlen}%
          {\underset{#1}{\overset{#2}{\longleftarrow}}}%
          {\ifdimgreater{\ulen}{\xlen}%
             {\underset{#1}{\overset{#2}{\longleftarrow}}}
             {\xleftarrow[#1]{#2}}}}%
      {\xleftarrow[#1]{#2}}
   }
\newcommand{\isoarrow}{%
   \ifbool{@display}{\overset{\sim}{\longrightarrow}}{\xrightarrow\sim}%
   }
\begin{document}

\title[Newton decomposition]{Cocenters of $p$-adic groups, I: Newton decomposition}
\author[X. He]{Xuhua He}
\address{Department of Mathematics, University of Maryland, College Park, MD 20742 and Institute for Advanced Study, Princeton, NJ 08540}
\email{xuhuahe@math.umd.edu}

\thanks{X. H. was partially supported by NSF DMS-1463852.}

\keywords{Hecke algebras, $p$-adic groups, cocenters}
\subjclass[2010]{22E50, 20C08}

\date{\today}

\begin{abstract}
In this paper, we introduce the Newton decomposition on a connected reductive $p$-adic group $G$. Based on it we give a nice decomposition of the cocenter of its Hecke algebra. Here we consider both the ordinary cocenter associated to the usual conjugation action on $G$ and the twisted cocenter arising from the theory of twisted endoscopy. 

We give Iwahori-Matsumoto type generators on the Newton components of the cocenter. Based on it, we prove a generalization of Howe's conjecture on the restriction of (both ordinary and twisted) invariant distributions. Finally we give an explicit description of the structure of the rigid cocenter. 
\end{abstract}

\maketitle


\section*{Introduction}

\subsection{}\label{BDK} A basic philosophy in representation theory is that {\it characters tell all}. This is the case for finite groups. More precisely, let $G$ be a finite group. Let $H=\BC[G]$ be its group algebra and $\fkR(G)$ be the Grothendieck group of finite dimensional complex representations of $G$. Then the trace map induces an isomorphism $$\Tr: \bar H \to \fkR(G)^*,$$ where $\bar H=H/[H, H]$ is the cocenter of $H$. In other words, for finite groups, the cocenter is ``dual'' to representations. 

Now how about $p$-adic groups?

Let $\BG$ be a connected reductive group over a nonarchimedean local field $F$ of arbitrary characteristic and $G=\BG(F)$. Let $H$ be the Hecke algebra of compactly supported, locally constant, $\BC$-valued functions on $G$. Let $\fkR(G)$ be the Grothendieck group of smooth admissible complex representations of $G$. In \cite{BDK} and \cite{Kaz}, Bernstein, Deligne and Kazhdan established the duality between the cocenter and the representations of $G$ in the following sense: $$\Tr: \bar H \xrightarrow{\cong} \fkR(G)^*_{good},$$ where $\fkR(G)^*_{good}$ is the space of ``good forms'' on $\fkR(G)$. Such a relation is further studied by Dat in \cite{Dat}. 

One may also consider the twisted version arising from the theory of twisted endoscopy. The recent work of Henniart and Lemaire \cite{HL} established the duality between the cocenter and the representations in the twisted version.  

\subsection{} The main purpose of this paper is to investigate the structure of the cocenter of the Hecke algebra $H$. Here we consider both the ordinary and the twisted cocenter, and the Hecke algebra we consider is not an algebra of $\BC$-valued functions, but its integral form, i.e., the algebra of $\BZ[p \i]$-valued functions instead, where $p$ is the residual characteristic of $F$. The structure of the integral form helps us to understand not only the ordinary representations of $p$-adic groups, but the mod-$l$ representations as well. It will be used to study the relation between the cocenter and the mod-$l$ representations in a future joint work with Ciubotaru \cite{CH2}.

Note that invariant distributions on $G$ are linear function on the cocenter of $H$. Thus knowledge of the structure of $\bar H$ will also help us to understand the invariant distributions. We will discuss some application in this direction later in this paper. 

\subsection{} For the group algebra of a finite group, the structure of the cocenter is very simple: it has a standard basis indexed by the set of conjugacy classes. 

The main difference between the Hecke algebra of a connected reductive $p$-adic group and the group algebra of a finite group comes from the two conditions in the definition of Hecke algebra:

\begin{itemize}
\item The ``locally constant'' condition, which means that in the cocenter, one can not separate a single conjugacy class from the others. This suggests that we should seek for a decomposition of the $p$-adic group $G$ into open subsets, each of which is a union of (ordinary or twisted) conjugacy classes. 

\item The ``compact support'' condition, which suggests that the sought-after open subset should be of the form $G \cdot X$, where $\cdot$ is the conjugation action and $X$ is an open compact subset of $G$. 
\end{itemize}

\subsection{} In this paper, we introduce the decomposition of $G$ into Newton strata, which satisfy the desired properties mentioned above. 

\begin{theoremA}[see Theorem \ref{newton-gf}]\label{A}
We have the Newton decomposition $$G=\sqcup_\nu G(\nu).$$ Here each Newton stratum $G(\nu)$ is of the form $G \cdot X_
\nu$, where $X_\nu$ is an open compact subset of $G$. 
\end{theoremA}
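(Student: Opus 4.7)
The plan is to construct a Newton point map $\nu : G \to \mathcal{N}$ whose values parametrize the strata $G(\nu)$, and then to exhibit each stratum in the claimed form.

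First, I would define $\nu(g)$ using a Jordan-type decomposition. Write $g = g_s g_u$ (in residual characteristic $0$ the usual Jordan decomposition; in general, a topological Jordan decomposition adapted to $\BG(F)$). Choose a maximal $F$-torus $T \subset Z_G(g_s)$ containing $g_s$, and let $A_T \subseteq T$ be the maximal $F$-split subtorus. The Kottwitz/valuation homomorphism $T(F) \to X_*(T)_{\BQ}^{\Gal(\bar F / F)} = X_*(A_T)_\BQ$ sends $g_s$ to a vector $\nu_T(g) \in X_*(A_T)_\BQ$. I would verify that the $G$-conjugacy class of the pair $(A_T, \nu_T(g))$ descends to a well-defined element $\nu(g)$ in the set $\mathcal{N}$ of dominant rational cocharacters of a fixed $A_0$ modulo the relative Weyl group $W_0$. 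Conjugation invariance $\nu(hgh^{-1}) = \nu(g)$ is built into the definition, so setting $G(\nu) := \nu^{-1}(\nu)$ yields the disjoint decomposition $G = \bigsqcup_\nu G(\nu)$.

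The substantive content is then the structural description of each $G(\nu)$. To each $\nu$ is attached its centralizer Levi $M = Z_G(\nu) \supseteq Z_G(g_s)$, an $F$-Levi subgroup containing $g_s$ in its center after conjugation. I would fix a specific element $z \in A_M(F)$ mapping to $\nu$ under the valuation, and choose a compact open subgroup $K_M \subset M(F)$ (e.g., a Moy--Prasad subgroup or a parahoric). The key claim is that every $g \in G(\nu)$ is $G$-conjugate to an element of the coset $z K_M \cdot M(F)^1 \subset M(F)$, where $M(F)^1$ is the kernel of the Kottwitz map on $M$. Then taking
\[
X_\nu \; := \; z \cdot K_M
\]
(up to enlarging by finitely many translates to absorb the ambiguity of the choice of $M$ inside its $G$-conjugacy class) gives an open compact subset of $G$ with $G(\nu) = G \cdot X_\nu$. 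Openness of $G(\nu)$ then follows from openness of $X_\nu$ in $M(F)$ combined with the standard fact that the conjugation map $G \times^M X_\nu \to G$ is open on the regular locus and extends by a submersion argument transverse to centralizers.

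The main obstacle, I expect, is the interplay of three technical points: (i) making the Newton point well-defined \emph{on all of} $G$, not only on semisimple or regular semisimple elements, which in positive residual characteristic requires the topological Jordan decomposition rather than the naive one; (ii) showing that $X_\nu$ can be arranged to be both open \emph{and} compact, which amounts to uniformly bounding the ``unipotent part'' of elements with prescribed Newton point --- this is where one uses that $M^1$ is compact modulo center and that $z K_M$ is the natural preimage of $\nu$; and (iii) matching, for varying $g \in G(\nu)$, all possible choices of torus $T$ and of decomposition $g = g_s g_u$, so that the conjugation $G \cdot X_\nu$ actually exhausts $G(\nu)$. Once (i)--(iii) are handled, the disjointness of the decomposition is automatic from the conjugation invariance of $\nu$, and the openness of strata is a local computation near a chosen point of each $G(\nu)$.
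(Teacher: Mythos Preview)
Your approach diverges from the paper's and has gaps I do not see how to close. The topological Jordan decomposition exists only for \emph{compact} elements (those generating a relatively compact subgroup); for a noncompact element such as $\diag(\varpi,1)\in\GL_2(F)$ in positive characteristic there is no decomposition $g=g_sg_u$ of the kind you invoke, so your map $\nu:G\to\CN$ is simply not defined on all of $G$. Your assertion that ``$M^1$ is compact modulo center'' is also false: for $M=\GL_2$ the group $M(F)^1=\{g:\lvert\det g\rvert=1\}$ contains $\SL_2(F)$ and has noncompact image in $M/Z$, so the passage from ``conjugate into $zK_M\cdot M(F)^1$'' to ``conjugate into $zK_M$'' breaks down. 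In fact a single coset $zK_M$ is too small even in principle: already for the central Newton point in a semisimple group one needs a union over \emph{all} maximal parahorics (cf.\ Proposition~\ref{rig-g}), since an element lying in some parahoric $\CP$ need not be $G$-conjugate into a fixed one. Finally, your outline says nothing about the $(\th,\omega)$-twisted setting in which the theorem is actually stated and used.

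The paper bypasses these issues by never defining a pointwise Newton map on $G$. It \emph{starts} from the explicit open compact set $X_\nu=\bigcup_{w\in\tW_{\min},\,\pi(w)=\nu}\CI\dot w\CI$, indexed by minimal-length elements of the Iwahori--Weyl group with prescribed invariant $\pi(w)=\nu$; the union is finite by Corollary~\ref{finite-w}, so $X_\nu$ is open compact for free. The covering $G=\bigcup_\nu G\cdot_\th X_\nu$ is proved by induction on $\ell(w)$ using the reduction method of \S\ref{DL-red} together with Theorem~\ref{min}. Disjointness is the substantive step and is proved by a growth argument (\S\ref{2.5}): if $g\cdot_\th h_1=h_2$ with $h_i\in X_{\nu_i}$ and $\nu_1\ne\nu_2$, one compares the twisted powers $h_i\th(h_i)\cdots\th^{n-1}(h_i)$, which lie in Iwahori double cosets drifting apart in $\tW$ faster than conjugation by the fixed element $g$ can compensate. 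This argument is uniform in all characteristics and in the twisted setting.
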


Let us provide some background on the Newton strata. We may realize $G$ as $\BG(\breve F)^\s$, where $\breve F$ is the completion of the maximal unramified extension of $F$ and $\s$ is the morphism on $\BG(\breve F)$ induced by the Frobenius morphism of $\breve F$ over $F$. The $\s$-twisted conjugacy classes of $\BG(\breve F)$ are classified by Kottwitz in \cite{Ko1} and \cite{Ko2}, in terms of the Newton points together with the image under the Kottwitz map. For split groups, by taking the intersection of $G$ with $\s$-twisted conjugacy classes of $\BG(\breve F)$, we obtain a decomposition of $G$. The situation is more subtle if the group is not quasi-split, as the Newton map of $G$ does not coincide with the Newton map of $\BG(\breve F)$. The difficulty is overcome by the Iwahori-Matsumoto type generators which we discuss later in \S\ref{0-IM}. 

\smallskip

For any Newton point $\nu$, let $H(\nu)$ be the subspace of $H$ consisting of functions supported in $G(\nu)$ and $\bar H(\nu)$ be its image in $\bar H$. We obtain the desired decompositions for $H$ and $\bar H$. 

\begin{theoremA} [see Theorem \ref{newton-h}]\label{B}
We have the Newton decomposition for the Hecke algebra $H$ and its cocenter $\bar H$: $$H=\oplus_\nu H(\nu), \qquad \bar H=\oplus_\nu \bar H(\nu).$$
\end{theoremA}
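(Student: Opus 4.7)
The plan is to deduce both decompositions from Theorem A, exploiting the openness and conjugation-invariance of the Newton strata.

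For the first decomposition, I would first note that each $G(\nu) = G \cdot X_\nu = \bigcup_{g \in G} g X_\nu g\i$ is open, being a union of translates of the open set $X_\nu$ under conjugation homeomorphisms. Consequently $G = \bigsqcup_\nu G(\nu)$ is a disjoint decomposition into clopen subsets. Given $f \in H$, the compact set $\supp(f)$ meets only finitely many strata, and each piece $f_\nu := f \cdot \mathbf{1}_{G(\nu)}$ is again locally constant (since $G(\nu)$ is clopen) and compactly supported (its support is closed in the compact set $\supp(f)$), hence lies in $H(\nu)$. Disjointness of the strata makes the expansion $f = \sum_\nu f_\nu$ unique, giving $H = \oplus_\nu H(\nu)$.

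For the cocenter, the projection $H \twoheadrightarrow \bar H$ yields $\bar H = \sum_\nu \bar H(\nu)$ immediately, and I would establish directness by invoking the standard identification
\[
[H, H] = \text{span}\bigl\{f - {}^g f : f \in H,\ g \in G\bigr\}, \qquad {}^g f(x) := f(g\i x g);
\]
see \cite{BDK, Dat} for the $\BC$-coefficient version, which adapts to the $\BZ[p\i]$-integral setting upon choosing a Haar measure valued in $p^{\BZ} \subset \BZ[p\i]^\times$ so that the normalizing constants $\vol(K)$ attached to compact open subgroups are invertible. Since each $G(\nu)$ is a union of conjugacy classes by construction, the operation $f \mapsto f - {}^g f$ preserves $H(\nu)$, so $[H, H]$ inherits the decomposition $[H, H] = \oplus_\nu ([H, H] \cap H(\nu))$. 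Passing to quotients yields $\bar H(\nu) \cong H(\nu) / (H(\nu) \cap [H, H])$ and $\bar H = \oplus_\nu \bar H(\nu)$.

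The main substantive input, beyond Theorem A itself, is the identification of $[H, H]$ with the span of conjugation differences in the integral Hecke algebra; once this is verified, the rest is formal bookkeeping. For the twisted variant arising from twisted endoscopy, the same scheme applies verbatim: one replaces $f \mapsto {}^g f$ by its $\s$-twisted analogue and uses that, by construction, the Newton strata in the twisted setting are unions of $\s$-twisted conjugacy classes, so the stability property driving the argument is preserved.
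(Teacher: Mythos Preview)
Your argument is correct and follows essentially the same route as the paper: the clopen-ness of the strata plus compactness of $\supp(f)$ give $H=\oplus_\nu H(\nu)$, and conjugation-stability of each $G(\nu)$ together with the identification of $[H,H]$ with the span of $f-{}^g f$ yields the cocenter decomposition. The only differences are packaging: the paper records the clopen/admissibility observation separately in \S\ref{adm}, invokes Proposition~\ref{fini} for the finiteness (where you use compactness of $\supp(f)$ against the open cover directly), and proves the commutator identification you cite as Proposition~\ref{comm} in the general $(\theta,\omega)$-twisted setting rather than appealing to \cite{BDK,Dat}.
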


\subsection{} Let $\CK$ be an open compact subgroup of $G$ and $H(G, \CK)$ be the Hecke algebra of compactly supported, $\CK$-biinvariant functions on $G$. We have $H=\varinjlim\limits_{\CK} H(G, \CK)$. To understand the representations of $G$, we need to understand not only the structure of the cocenter of $H$, but the structure of the cocenter of $H(G, \CK)$ as well. Unfortunately, for any given $\CK$, $H(G, \CK)$ does not have the Newton decomposition as the Newton strata of $G$ are not stable under the left/right action of $\CK$. However, we show that 

\begin{theoremA}[see Theorem \ref{newton-hn}]\label{C}
Let $n \in \BN$ and $\CI_n$ be the $n$th congruence subgroup of a Iwahori subgroup $\CI$ (cf. \S \ref{4.2}). Then 

(1) The cocenter of $H(G, \CI_n)$ has the desired Newton decomposition $$\bar H(G, \CI_n)=\oplus_{\nu} \bar H(G, \CI_n; \nu).$$ 

(2) For each $\nu$, any element in the Newton component $\bar H(G, \CI_n; \nu)$ is represented by a function of $H(G, \CI_n)$ that is supported in the open compact subset $X_\nu$. 
\end{theoremA}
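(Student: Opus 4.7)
The plan is to deduce Theorem~C from Theorem~B by combining the cocenter decomposition with the Iwahori--Matsumoto type generators of $\bar H(\nu)$ discussed in \S\ref{0-IM}. The intuition is that those generators may be chosen to be $\CI_n$-biinvariant functions whose supports sit inside $X_\nu$; once this is granted, both parts of the theorem follow by formal arguments.

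The first technical ingredient is a conjugation identity in $\bar H$: for any $h \in G$ and any open compact $V \subset G$, one has $[\mathbbm{1}_{h V h\i}] = [\mathbbm{1}_V]$. The proof is a short convolution calculation. Choose an open compact subgroup $K$ small enough that $V$ is $K$-biinvariant, and set $\phi_1 = \vol(K)\i \mathbbm{1}_{V h\i}$, $\phi_2 = \mathbbm{1}_{h K}$. Then $\phi_1 * \phi_2 = \mathbbm{1}_V$ and $\phi_2 * \phi_1 = \mathbbm{1}_{h V h\i}$, so the two classes agree in the cocenter. Applied iteratively, this shows that any $f \in H \cap H(\nu)$ admits a representative of $[f] \in \bar H(\nu)$ supported in $X_\nu$: decompose $f = \sum_j c_j \mathbbm{1}_{V_j}$ with each $V_j$ a small open compact in $G(\nu)$, use compactness of $V_j$ to cover it by finitely many conjugates $h X_\nu h\i$, and transport each piece back to $X_\nu$ by the identity above.

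For part~(2) one must refine this to produce an $\CI_n$-biinvariant representative still supported in $X_\nu$. Here I would invoke the explicit description from \S\ref{0-IM}: $\bar H(\nu)$ is generated by classes $[\mathbbm{1}_{\CI_n w \CI_n}]$ for $w$ running over a suitable set of representatives contained in $X_\nu$. Since such generators already live in $H(G, \CI_n)$ and have support in $X_\nu$, any class in the Newton component $\bar H(G, \CI_n; \nu)$ is automatically represented by an element of $H(G, \CI_n)$ supported in $X_\nu$. This is the step where I expect the main obstacle to lie: the nontrivial content is the compatibility, at level $\CI_n$, between the affine Bruhat-type decomposition of $G$ and the Newton stratification, i.e., the assertion that the Iwahori--Matsumoto generators produced in \S\ref{0-IM} can be arranged to lie inside the prescribed sets $X_\nu$, and that the relations used to pass between different representatives can be realized by commutators within $H(G, \CI_n)$ rather than just within $H$.

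Finally, for part~(1) the decomposition $\bar H(G, \CI_n) = \oplus_\nu \bar H(G, \CI_n; \nu)$ is formal: given $[f] \in \bar H(G, \CI_n)$, decompose $f = \sum_\nu f_\nu$ in $H$ via Theorem~B, and apply part~(2) to represent each $[f_\nu]$ by an element $f'_\nu \in H(G, \CI_n)$ supported in $X_\nu$; the directness of the sum is inherited from the directness of $\bar H = \oplus_\nu \bar H(\nu)$. The reason one cannot simply restrict $f$ to each stratum is that $G(\nu)$ need not be stable under left--right multiplication by $\CI_n$ (as emphasized in the statement preceding the theorem), so a naive restriction would leave $H(G, \CI_n)$; the conjugation identity, applied inside $\bar H$, is precisely what sidesteps this obstruction.
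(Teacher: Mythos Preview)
Your proposal has two genuine gaps, and both stem from the same underlying issue: the entire content of Theorem~C is the passage from level~$\CI$ (or the full~$H$) down to level~$\CI_n$, and neither of your two moves accomplishes that passage.

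First, the conjugation identity $[\mathbbm{1}_{hVh^{-1}}]=[\mathbbm{1}_V]$ is correct in $\bar H$, but conjugating by an arbitrary $h\in G$ does not preserve $\CI_n$-biinvariance; so after transporting pieces of $f$ into $X_\nu$ you are no longer in $H(G,\CI_n)$. You then try to repair this by ``invoking'' that $\bar H(\nu)$ is generated by classes $[\mathbbm{1}_{\CI_n w\CI_n}]$ with $w$ minimal. But this statement is \emph{false} for fixed~$n$ (the full $\bar H(\nu)$ is a union over all levels), and its correct version---that $\bar H(G,\CI_n;\nu)$ is generated by the $\bar H(G,\CI_n)_w$ for minimal~$w$---is precisely Theorem~\ref{newton-hn}(2). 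Section~\ref{0-IM} only records the level-$\CI$ result of \cite{HN1} and announces Theorem~C; it does not supply the level-$\CI_n$ generators you want to cite. So this step is circular, as you partly suspect.

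Second, your deduction of (1) from (2) is also circular. Writing $f=\sum_\nu f_\nu$ via Theorem~B gives $f_\nu\in H(\nu)$, but in general $f_\nu\notin H(G,\CI_n)$ (this is exactly the point stressed in \S\ref{4.2}). Part~(2) applies only to classes already known to lie in $\bar H(G,\CI_n;\nu)$, i.e.\ to images of elements of $H(G,\CI_n;\nu)$; it says nothing about $[f_\nu]$ until you know $[f_\nu]\in\bar H(G,\CI_n;\nu)$, which is part~(1).

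What the paper actually does is prove (1) and (2) simultaneously by working entirely inside $H(G,\CI_n)$. The key technical input is the multiplication formula of Proposition~\ref{mult}: if $\ell(ww')=\ell(w)+\ell(w')$ then $\mathbbm{1}_{\CI_n g\CI_n}\,\mathbbm{1}_{\CI_n g'\CI_n}=\mu_G(\CI_n)\,\mathbbm{1}_{\CI_n gg'\CI_n}$ for $g\in\CI\dot w\CI$, $g'\in\CI\dot w'\CI$. This is proved via a counting argument over $\breve F$ (Lemmas~\ref{fix} and~\ref{qq}). From it one gets the reduction Lemma~\ref{bar-h-w}: $\bar H(G,\CI_n)_w=\bar H(G,\CI_n)_{sw\th(s)}$ when lengths agree, and $\bar H(G,\CI_n)_w\subset\bar H(G,\CI_n)_{sw\th(s)}+\bar H(G,\CI_n)_{sw}$ when $sw\th(s)<w$, with the commutators taken in $H(G,\CI_n)$ itself. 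Induction on $\ell(x)$ using Theorem~\ref{min} then yields $\bar H(G,\CI_n)=\sum_{w\in\tW_{\min}}\bar H(G,\CI_n)_w$, and since each summand lies in a single $\bar H(G,\CI_n;\pi(w))$, both assertions follow at once (directness from Theorem~B). The point you identified as the ``main obstacle'' is thus resolved not by a conjugation trick but by an honest Iwahori--Matsumoto style reduction carried out at level~$\CI_n$, and Proposition~\ref{mult} is the engine that makes this possible.
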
 

This is the main result of this paper. The two key ingredients of the proof are

\begin{itemize}
\item Newton decompositions on $G$, $H$ and $\bar H$ that we discussed above.

\item Iwahori-Matsumoto type generators that we are going to discuss in \S\ref{0-IM}. 
\end{itemize}

\smallskip

In the body of the paper, we consider a more general case by allowing twists by an automorphism $\th$ of $G$ and a character $\omega$ of $G$. Theorem \ref{A}, \ref{B} and \ref{C} are proved under this general setting. 

\subsection{}\label{0-IM} Let $\CI$ be an Iwahori subgroup of $G$ and $\tW$ be the Iwahori-Weyl group. Then we have the decomposition $G=\sqcup_{w \in \tW} \CI \dot w \CI$. It is known that the Iwahori-Hecke algebra $H(G, \CI)$ has the Iwahori-Matsumoto presentation with basis given by $\{T_w\}_{w \in \tW}$, where $T_w$ is the characteristic function on $\CI \dot w \CI$. 

In the joint work with Nie \cite{HN1}, we discovered that the cocenter $\bar H(G, \CI)$ has a standard basis $\{T_w\}$, where $w$ runs over minimal length representatives of conjugacy classes of $\tW$. This is the Iwahori-Matsumoto type basis of the cocenter of Iwahori-Hecke algebra $H(G, \CI)$. 

Now come back to our general situation $H(G, \CK)$, where $\CK$ is a congruent subgroup of $\CI$. We define $X_\nu=\cup_w \CI \dot w \CI$, where $w \in \tW$ runs over the minimal length elements in the conjugacy class associated to the given Newton point $\nu$. We show that $X_\nu$ is the sought-after open compact subset in Theorem \ref{A} and Theorem \ref{C}. The proof is based on 
\begin{itemize}
\item Some remarkable properties on the minimal length elements established in \cite{HN1}.

\item The compatibility between the reduction method on $H(G, \CK)$ in \S\ref{4} and the reduction method on $G$ introduced in \cite{He-14}.
\end{itemize}

\subsection{} Now we discuss some applications. 

In \cite{Howe}, Howe conjectured that for any open compact subgroup $\CK$ and compact subset $X$ of $G$, the restriction of invariant distributions $J(G \cdot X)$ supported in $G \cdot X$ to $H(G, \CK)$ is finite dimensional. This conjectured is proved by Clozel \cite{Cl} over $p$-adic fields and later by Barbasch and Moy \cite{BM} over any nonarchimedean local field. The twisted invariant distribution has not been much studied yet. 

Howe's conjecture plays a fundamental role in the harmonic analysis of $p$-adic groups (see e.g. \cite{HC} and \cite{De}). It will also play a crucial role in the future joint work with Ciubotaru \cite{CH2} in the proof of trace Paley-Wiener theorem for mod-$l$ representations of $p$-adic groups. 

In Section \ref{5}, we give a different proof of the Howe's conjecture, which is valid for both the ordinary and the twisted invariant distributions. Note that $G \cdot X$ is contained in a finite union of $G(\nu)$ and the ordinary/twisted invariant distributions supported in $G(\nu)$ are linear functions on $\bar H(\nu)$. It is also easy to see that for any given Newton point $\nu$, there are only finitely many minimal length elements $w$ associated to it. Now Howe's conjecture follows from the Newton decomposition of $\bar H(G, \CK)$ and the Iwahori-Matsumoto type generators of $\bar H(G, \CK; \nu)$. 

\begin{theoremA} [see Theorem \ref{howe-1}]
For any open compact subgroup $\CK$ and compact subset $X$ of $G$, $$\dim J(G \cdot X) \mid_{H(G, \CK)}<\infty.$$
\end{theoremA}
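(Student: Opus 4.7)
The plan is to follow the outline sketched right after the theorem statement: assemble Theorem B (Newton decomposition of $\bar H$), Theorem C (finite presentation of each Newton component of $\bar H(G,\CI_n)$), and the observation that only finitely many minimal length elements are associated to a given Newton point.

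First I would reduce to $\CK = \CI_n$. Since the $\CI_n$ form a neighborhood basis of $1$ in $G$, any open compact subgroup $\CK$ contains some $\CI_n$, hence $H(G,\CK) \subset H(G,\CI_n)$; further restriction gives a surjection $J(G\cdot X)\big|_{H(G,\CI_n)} \twoheadrightarrow J(G\cdot X)\big|_{H(G,\CK)}$, so it suffices to bound the dimension on the left. Next I would show that the compact set $X$ meets only finitely many Newton strata: pick any open compact $Y\supset X$, view its characteristic function as an element of $H$, and apply Theorem B to decompose it as a finite sum $f_{\nu_1}+\cdots+f_{\nu_k}$ with $f_{\nu_i}\in H(\nu_i)$. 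Since the $f_{\nu_i}$ have pairwise disjoint supports lying in the respective Newton strata, one obtains $G\cdot X \subset G(\nu_1)\cup\cdots\cup G(\nu_k)$, the union being $G$-stable by Theorem A.

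With these reductions in hand, the rest is formal. Any invariant distribution $D$ supported in $G\cdot X$ vanishes on every function whose support is disjoint from $G\cdot X$; in particular $D$ kills each $H(\nu)$ for $\nu\notin\{\nu_1,\dots,\nu_k\}$. Since $D$ factors through the cocenter, $D\big|_{H(G,\CI_n)}$ descends to a linear functional on $\bar H(G,\CI_n)$ that vanishes on $\bar H(G,\CI_n;\nu)$ for $\nu$ outside this finite set. By the direct sum decomposition in Theorem C, this functional is determined by its values on the finite direct sum $\bigoplus_{i=1}^{k}\bar H(G,\CI_n;\nu_i)$. Part (2) of Theorem C then shows that each summand is a quotient of the space of $\CI_n$-biinvariant functions supported in $X_{\nu_i}=\bigcup_w \CI\dot w\CI$, where $w$ ranges over the finitely many minimal length representatives associated to $\nu_i$. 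Since $\CI/\CI_n$ is finite, each $\CI\dot w\CI$ breaks into finitely many $\CI_n$-double cosets, so each $\bar H(G,\CI_n;\nu_i)$ is finite-dimensional and the theorem follows.

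The only real delicacy is verifying carefully that, in both the ordinary and twisted settings, an invariant distribution supported in $G\cdot X$ genuinely descends to a functional on the cocenter $\bar H(G,\CI_n)$ that vanishes on the Newton components outside $\{\nu_1,\dots,\nu_k\}$. Once this is granted the argument is uniform in the two cases and amounts to a clean packaging of Theorems A--C together with the finiteness of minimal length representatives.
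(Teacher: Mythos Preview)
Your proposal is correct and follows essentially the same route as the paper: reduce to $\CK=\CI_n$, use Proposition~\ref{fini} (which is what your Theorem~B argument amounts to) to trap $X$ in finitely many Newton strata, then invoke the Newton decomposition and Iwahori--Matsumoto generators of $\bar H(G,\CI_n)$ together with Corollary~\ref{finite-w} and the finiteness of $\CI/\CI_n$ to bound each $\bar H(G,\CI_n;\nu_i)$. The paper packages the last step as Theorem~\ref{howe}, obtaining the sharper bound $N_\nu[\CI:\CI_n]$ via Lemma~\ref{qq}, but the logical structure is identical; the ``delicacy'' you flag is immediate since an invariant distribution vanishes on all of $[H,H]_{\th,\omega}$ and on any $H(\nu)$ disjoint from its support.
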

 
In fact, in the proof we mainly use the part $\bar H(G, \CK)=\sum_{\nu} \bar H(G, \CK; \nu)$. The fact that this is a direct sum will play an important role later (see e.g. \cite{CH2}) when we study the relation between the cocenter and the representations. 
 
\subsection{} As another application, we describe the structure of the rigid cocenter in terms of generators and relations. 

As discussed in \S\ref{BDK}, the cocenter is ``dual'' to the representations. Based on the Newton decomposition, we may decompose the whole cocenter into the rigid part and non-rigid part. The rigid cocenters of various Levi subgroups form the ``building blocks'' of the whole cocenter. 

We have mentioned in \S \ref{0-IM} (see also Theorem \ref{newton-hn}) that the cocenter has the Iwahori-Matsumoto type generators. The next problem is to describe the relations between these generators. In Theorem \ref{relation}, we solve this problem for the rigid cocenter: the Iwahori-Matsumoto type generators of the rigid cocenter are given by the cocenters of Hecke algebras of parahoric subgroups $\CP$, and the relations between these generators are given by the $(\CP, \CQ, x)$-graphs (see \S\ref{pqx} for the precise definition). The proof is based on the Iwahori-Matsumoto type generators of the cocenter and Howe's conjecture. 

It is shown in \cite{CH} that the rigid cocenter of affine Hecke algebras plays an important role in the study of representations of affine Hecke algebras. We expect that the rigid cocenter of the Hecke algebra $H$ plays a similar role in the study of ordinary and mod-$l$ representations of $p$-adic groups. 
 
\subsection{Acknowledgment} The study of the cocenter of Hecke algebras of $p$-adic groups began after Dan Ciubotaru and I finished the paper \cite{CH} on the rigid cocenter of affine Hecke algebras. I would like to thank him for many enjoyable discussions on the cocenter project and for drawing my attention to Howe's conjecture. I would like to thank Thomas Haines, Ju-Lee Kim, George Lusztig and Sian Nie for useful discussions and comments. I also would like to thank the referees for the helpful comments, and for pointing out a mistake in an earlier version of the proof of Lemma \ref{qq}. 

\section{Preliminaries}

\subsection{} Let $F$ be a nonarchimedean local field of arbitrary characteristic. Let $\CO_F$ be its valuation ring and $\kk=\BF_q$ be its residue field. Let $\BG$ be a connected reductive group over $F$ and $G=\BG(F)$. Fix a maximal $F$-split torus $A$. Let $\mathscr A$ be the apartment corresponding to $A$. Fix an alcove $\fka_C\subset \sA$, and denote by $\CI$ the associated Iwahori subgroup. 

Let $Z$ be the centralizer of $A$ and $N_G A$ be the normalizer of $A$. Denote by $W_0=N_G A(F)/Z(F)$ the relative Weyl group. The {\it Iwahori-Weyl group} $\tW$ is defined to be $\tW=N_G A(F)/Z_0$, where $Z_0$ is the unique parahoric subgroup of $Z(F)$ (cf. \cite[\S 5.2.7]{BT2}). The group $\tW$ acts on $\mathscr A$ by affine transformations as described in \cite[\S 1]{Tits}. Let $G_0$ be the subgroup of $G$ generated by all parahoric subgroups, and define $N_0 A=G_0\cap N_G A(F)$. Let $\tilde \BS$ be the set of simple reflections at the walls of $\fka_C$. By Bruhat and Tits \cite[Prop. 5.2.12]{BT2}, the quadruple $(G_0,\CI,N_0 A, \tilde \BS)$ is a (double) Tits system with affine Weyl group $W_a=N_0 A/N_0 A\cap \CI$. We have a semidirect product
$$\tW=W_a \rtimes \Omega,$$ where $\Omega$ is the stabilizer of the alcove $\fka_C$ in $\tW$. Thus $\tW$ is a quasi-Coxeter system and is equipped with a Bruhat order $\leq$ and a length function $\ell$. 

For any $K \subset \tilde \BS$, let $W_K$ be the subgroup of $\tW$ generated by $s \in K$. Let ${}^K \tW$ be the set of elements $w \in \tW$ of minimal length in the cosets $W_K w$. 

Set $V=X_*(Z)_{\Gal(\bar F/F)} \otimes \BR$, where $\bar F$ is the completion of a separable closure of $F$. By choosing a special vertex of $\fka_C$, we may identify $\sA$ with the underlying affine space of $V$ and by \cite[Proposition 13]{HR}, $$\tW \cong X_*(Z)_{\Gal(\bar F/F)} \rtimes W_0=\{t^\l w; \l \in X_*(Z)_{\Gal(\bar F/F)}, w \in W_0\}.$$

\subsection{}\label{hecke}
Let $\CI'$ be the pro-$p$ Iwahori subgroup of $G$. Following \cite[Chapter I, \S 2]{Vi}, we define a $\BZ[\frac{1}{p}]$-valued Haar measure $\mu_G$ by $$\mu_G (\CK, \mu)=\frac{[\CK: \CK \cap \CI']}{[\CI': \CK \cap \CI']}, \quad \text{ for any open compact subgroup } \CK \text{ of } G.$$ Note that $[\CI': \CK \cap \CI']$ is a power of $p$. Thus $\mu_G (\CK, \mu) \in \BZ[\frac{1}{p}]$ for all $\CK$. 

Let $H=H(G)$ be the space of locally constant, compactly supported $\BZ[\frac{1}{p}]$-valued functions on $G$. For any open compact subgroup $\CK$ of $G$, let $H(G, \CK)$ be the space of compactly supported, $\CK \times \CK$-invariant $\BZ[\frac{1}{p}]$-valued functions on $G$. Then for $\CK' \subset \CK$, we have a natural embedding $H(G, \CK) \hookrightarrow H(G, \CK')$ and $$H=\varinjlim\limits_{\CK} H(G, \CK).$$ 
Note that for any open compact subgroup $\CK$, $H(G, \CK)$ has a canonical $\BZ[\frac{1}{p}]$-basis $\{\mathbbm{1}_{\CK K g \CK}; g \in \CK \backslash G/\CK\}$, where $\mathbbm{1}_{\CK g \CK}$ is the characteristic function on $\CK g \CK$. 

The space $H$ is equipped with a natural convolution product $$f f'(g)=\int_{G} f(x) f'(x \i g) d \mu, \quad \text{ for } f, f' \in H, g \in G.$$ It can be described in a more explicit way as follows. 

(a) Let $X, Y$ be open compact subsets of $G$ and $\CK$ be an open compact pro-$p$ subgroup of $G$ such that $\CK X=X$ and $Y \CK=Y$. Then $$\mathbbm{1}_X \mathbbm{1}_Y=\sum_{g \in \CK \backslash X Y/\CK} \frac{\mu_{G \times G}(p \i (\CK g \CK))}{\mu_G(\CK g \CK)} \mathbbm{1}_{\CK g \CK},$$ where $p: X \times Y \to X Y$ is the multiplication map. 

Since the volume of each double coset of $\CK$ is a power of $p$, $\frac{\mu_G(p \i (\CK g \CK))}{\mu_G(\CK g \CK)} \in \BZ[\frac{1}{p}]$. 

\subsection{} Let $R$ be a commutative $\BZ[\frac{1}{p}]$-algebra and $H_R=H \otimes_{\BZ[\frac{1}{p}]} R$. We define the twisted action of $G$ on $H_R$ as follows. Let $\th$ be an automorphism of $G$ that stabilizes $A$ and $\CI$. We denote the induced affine transformation on $V$ and the length-preserving automorphism on $\tW$ still by $\th$. We assume furthermore that the actions of $\th$ on $V$ and on $\tW$ are both of finite order. Let $\omega$ be a character of $G$, i.e., a homomorphism from $G$ to $R^\times$ whose kernel is an open subgroup of $G$. We define the $(\th, \omega)$-twisted $G$-action on $H_R$ by $${}^x f(g)=\omega(x) f(x \i g \th(x)) \qquad \text{ for } f \in H_R, x, g \in G.$$

We define the $(\th, \omega)$-twisted commutator of $H_R$ as follows. 

Notice that $H_R$ is generated by the elements $\mathbbm{1}_X$, where $X$ is an open compact subset of $G$ such that $\omega \mid_X$ is constant. Let $[H_R, H_R]_{\th, \omega}$ be the $R$-submodule of $H_R$ spanned by $$[\mathbbm{1}_X, \mathbbm{1}_{X'}]_{\th, \omega}=\mathbbm{1}_X \mathbbm{1}_{X'}-\omega(X) \i \mathbbm{1}_{X'} \mathbbm{1}_{\th(X)},$$ where $X, X'$ are open subsets of $G$ such that $\omega \mid_X$ is constant. 

\begin{proposition}\label{comm}
The $R$-submodule $[H_R, H_R]_{\th, \omega}$ of $H_R$ equals the $R$-submodule of $H_R$ spanned by $f-{}^x f$, where $f \in H_R$ and $x \in G$. 
\end{proposition}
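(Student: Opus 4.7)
The plan is to extract both inclusions from a single explicit commutator identity. I claim that for any $x \in G$, open compact $Y \subseteq G$, and any sufficiently small compact open subgroup $\CK \subseteq \ker \omega$ that is pro-$p$ and satisfies $\theta(\CK) = \CK$, one has
\[
[\mathbbm{1}_{x^{-1}\CK}, \mathbbm{1}_{xY}]_{\theta,\omega} \;=\; \mu_G(\CK)\bigl(\mathbbm{1}_Y - {}^x\mathbbm{1}_Y\bigr).
\]
Here ``sufficiently small'' means that $xY$ is left $\CK$-invariant and $xY\theta(x)^{-1}$ is right $\CK$-invariant. Such $\CK$ form a neighborhood basis of $1_G$: start with any pro-$p$ open compact subgroup of $\ker \omega$ (these form a basis since $\omega$ has open kernel) and intersect with finitely many of its $\theta$-conjugates. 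Note that $\mu_G(\CK)$ is then a power of $p$, hence invertible in the $\BZ[\tfrac{1}{p}]$-algebra $R$.

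To verify the identity I would compute the two convolutions directly. The substitution $y = x^{-1}k$ with $k \in \CK$ in $\mathbbm{1}_{x^{-1}\CK} \ast \mathbbm{1}_{xY}$, combined with the left $\CK$-invariance of $xY$, gives $\mu_G(\CK)\mathbbm{1}_Y$. A parallel calculation---changing variables first to $u = y^{-1}z$ (using the unimodularity of $G$) and then to $u = \theta(x)^{-1}k$, and using the right $\CK$-invariance of $xY\theta(x)^{-1}$---yields $\mathbbm{1}_{xY}\ast \mathbbm{1}_{\theta(x)^{-1}\CK} = \mu_G(\CK)\mathbbm{1}_{xY\theta(x)^{-1}}$. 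The identity then follows by observing that $\theta(x^{-1}\CK) = \theta(x)^{-1}\CK$, that $\omega(x^{-1}\CK)^{-1} = \omega(x)$, and that ${}^x\mathbbm{1}_Y = \omega(x)\mathbbm{1}_{xY\theta(x)^{-1}}$.

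Both inclusions now follow. For the span of $f - {}^xf$ being contained in $[H_R, H_R]_{\theta,\omega}$: it is spanned $R$-linearly by elements $\mathbbm{1}_Y - {}^x\mathbbm{1}_Y$ over open compact $Y$ and $x \in G$, and each such element equals $\mu_G(\CK)^{-1}$ times the explicit commutator on the left-hand side of the identity. For the reverse inclusion: given a generator $[\mathbbm{1}_X, \mathbbm{1}_{X'}]_{\theta,\omega}$ with $\omega|_X$ constant, choose $\CK$ small enough and write $X = \bigsqcup_i x_i\CK$. Then $\theta(X) = \bigsqcup_i \theta(x_i)\CK$ (using $\theta(\CK) = \CK$) and $\omega(x_i) = \omega(X)$ for every $i$, so
\[
[\mathbbm{1}_X, \mathbbm{1}_{X'}]_{\theta,\omega} = \sum_i [\mathbbm{1}_{x_i\CK}, \mathbbm{1}_{X'}]_{\theta,\omega}.
\]
Applying the identity to each summand with $x := x_i^{-1}$ and $Y := x_i X'$ rewrites the right-hand side as $\mu_G(\CK)\sum_i (\mathbbm{1}_{x_i X'} - {}^{x_i^{-1}}\mathbbm{1}_{x_i X'})$, visibly of the form $f - {}^y f$.

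The main obstacle is choosing a single $\CK$ satisfying every constraint simultaneously---pro-$p$, $\theta$-stable, contained in $\ker\omega$, \emph{and} small enough that the bi-invariance hypotheses hold for each of the (finitely many) coset representatives $x_i$ produced in the decomposition of $X$. One arranges this by first selecting a compact open $\CK_0$ under which both $X$ and $X'$ are bi-invariant, and then shrinking $\CK$ to a $\theta$-stable pro-$p$ subgroup of $\ker\omega$ contained in $\bigcap_i \theta(x_i)^{-1}\CK_0 \theta(x_i)$, which remains open compact. This bookkeeping is routine; once the choice of $\CK$ is settled, everything else is a formal consequence of the single identity above.
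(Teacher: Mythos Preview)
Your approach is correct and somewhat more economical than the paper's. The paper treats the two inclusions by separate computations: for one direction it shows that ${}^x\mathbbm{1}_{\CK g}\equiv \frac{\mu_G(\CK)}{\mu_G(\CK g\CK)}\mathbbm{1}_{\CK g\CK}$ modulo commutators (independent of $x$), and for the other it decomposes $X$ into left $\CK$-cosets, $X'$ into right $\CK$-cosets, and expands $[\mathbbm{1}_{\CK g},\mathbbm{1}_{g'\CK}]_{\th,\omega}$ explicitly as a sum of terms $f-{}^{y}f$. You instead isolate the single identity $[\mathbbm{1}_{x^{-1}\CK},\mathbbm{1}_{xY}]_{\th,\omega}=\mu_G(\CK)\bigl(\mathbbm{1}_Y-{}^x\mathbbm{1}_Y\bigr)$ and read off both inclusions from it; this is cleaner and makes the structure of the argument more transparent. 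The convolution calculations underlying your identity are essentially the same ones the paper carries out, just packaged differently.

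One point in your final paragraph needs tightening. You write $X=\bigsqcup_i x_i\CK$ and then choose $\CK$ inside $\bigcap_i \th(x_i)^{-1}\CK_0\th(x_i)$, but the coset representatives $x_i$ depend on $\CK$, so as written this is circular. The easy fix is to replace the finite intersection over representatives by the intersection $\bigcap_{x\in X}\th(x)^{-1}\CK_0\th(x)$ over the whole compact set $X$; since $\th(X)$ is compact and lies in finitely many left $\CK_0$-cosets, this intersection still contains an open subgroup (take the $\CK_0$-normal core of $\bigcap_j \th(y_j)^{-1}\CK_0\th(y_j)$ where the $y_j$ are $\CK_0$-coset representatives for $X$). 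With $\CK$ chosen inside that, the right $\CK$-invariance of $X'\th(x_i)$ holds for \emph{every} $x_i\in X$, and the rest of your argument goes through verbatim.
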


\begin{remark}
The untwisted case is stated in \cite[Proof of Lemma 3.1]{Ka}. 
\end{remark}

\begin{proof}
We first show that $f-{}^x f \in [H_R, H_R]_{\th, \omega}$. 

Let $\CK$ be a $\th$-stable open compact pro-$p$ subgroup $\CK$ of $G$ such that $f$ is left $\CK$-invariant and $\omega(\CK)=1$. We may write $f$ as a linear combination of $\mathbbm{1}_{\CK g}$ for $g \in G$. We have ${}^x \mathbbm{1}_{\CK g}=\omega(x) \mathbbm{1}_{x \CK g \th(x) \i}$. By definition,
\begin{align*}
\omega(x) \mathbbm{1}_{x \CK g \th(x) \i} &=\frac{\omega(x)}{\mu_G(\CK)} \mathbbm{1}_{x \CK} \mathbbm{1}_{\CK g \th(x) \i} \equiv \frac{1}{\mu_G(\CK)} \mathbbm{1}_{\CK g \th(x) \i} \mathbbm{1}_{\th(x) \CK}  \\ 
&=\frac{\mu_G(\CK)}{\mu_G(\CK g \CK)} \mathbbm{1}_{\CK g \CK} \mod [H_R, H_R]_{\th, \omega}.
\end{align*}

In particular, by taking $x=1$, we have $\mathbbm{1}_{\CK g} \equiv \frac{\mu_G(\CK)}{\mu_G(\CK g \CK)} \mathbbm{1}_{\CK g \CK} \mod [H_R, H_R]_{\th, \omega}.$ Hence $\mathbbm{1}_{\CK g} \equiv {}^x \mathbbm{1}_{\CK g} \mod [H_R, H_R]_{\th, \omega}$. 

Now let $X, X'$ be open subsets of $G$ such that $\omega \mid_X$ is constant. We show that $[\mathbbm{1}_X, \mathbbm{1}_{X'}]_{\th, \omega}$ lies in the span of $f-{}^x f$. 

We choose a $\th$-stable open compact pro-$p$ subgroup $\CK$ such that $X$ is left $\CK$-invariant, $X'$ is right $\CK$-invariant and $\omega(\CK)=1$. We may write $\mathbbm{1}_X$ as a sum of $\mathbbm{1}_{\CK g}$ and write $\mathbbm{1}_{X'}$ as a sum of $\mathbbm{1}_{g' \CK}$. Then $[\mathbbm{1}_X, \mathbbm{1}_{X'}]_{\th, \omega}=\sum_{g, g'} [\mathbbm{1}_{\CK g}, \mathbbm{1}_{g' \CK}]_{\th, \omega}$. We have 
\begin{align*}
[\mathbbm{1}_{\CK g}, \mathbbm{1}_{g' \CK}]_{\th, \omega} &=\mathbbm{1}_{\CK g} \mathbbm{1}_{g' \CK}-\omega(g) \i \mathbbm{1}_{g' \CK} \mathbbm{1}_{\CK \th(g)} \\&=\frac{\mu_G(\CK)^2}{\mu_G(\CK g g' \CK)} \mathbbm{1}_{\CK g g' \CK}-\omega(g) \i \mu_G(\CK) \mathbbm{1}_{g' \CK \th(g)} \\ &=\frac{\mu_G(\CK)^2}{\mu_G(\CK g g' \CK)} (\mathbbm{1}_{\CK g g' \CK}-\frac{\omega(g) \i \mu_G(\CK g g' \CK)}{\mu_G(\CK)} \mathbbm{1}_{g' \CK \th(g)}). 
\end{align*}

Set $n=\frac{\mu_G(\CK g g' \CK)}{\mu_G(\CK)}$. We have that $\mathbbm{1}_{\CK g g' \CK}=\sum_{i=1}^n \mathbbm{1}_{k_i g g' \CK}$ for some $k_i \in \CK$. Note that $(k_i g) \i (k_i g g' \CK) \th(k_i g)=g' \CK \th(k_i) \th(g)=g' \CK \th(g)$ and $\omega(k_i g)=\omega(g)$. Thus $\omega(g) \i \mathbbm{1}_{g' \CK \th(g)}={}^{(k_i g) \i} \mathbbm{1}_{k_i g g' \CK}$ and $$[\mathbbm{1}_{\CK g}, \mathbbm{1}_{g' \CK}]_{\th, \omega}=\frac{\mu_G(\CK)^2}{\mu_G(\CK g g' \CK)} \sum_{i=1}^{n} (\mathbbm{1}_{k_i g g' \CK}-{}^{(k_i g) \i} \mathbbm{1}_{k_i g g' \CK}).$$

The proposition is proved.
\end{proof}

\subsection{} Let $\bar H_R=H_R/[H_R, H_R]_{\th, \omega}$. This is the $(\th, \omega)$-twisted cocenter of $H_R$. The distributions on $G$ are the $R$-valued linear functions on $H_R$. We say that a distribution is $(\th, \omega)$-invariant if it vanishes on $f-{}^x f$ for all $f \in H_R$ and $x \in G$. In other words, the twisted action of $G$ on $H_R$ induces a twisted action of $G$ on the set of distributions via ${}^x j(f)=j({}^{x \i} f)$ for any distribution $j$, $f \in H_R$ and $x \in G$. A distribution $j$ is $(\th, \omega)$-invariant if and only if $j={}^x j$ for any $x \in G$. We denote by $J(G)=\bar H_R^*$ the set of all $(\th, \omega)$-invariant distributions on $G$. 

\section{Newton decomposition of $G$}

\subsection{}\label{k-n} The $\th$-twisted conjugation action on $\tW$ is defined by $w \cdot_\th w'=w w' \th(w) \i$. Let $\text{cl}_{\th}(\tW)$ be the set of $\th$-twisted conjugacy classes of $\tW$. Since the action of $\th$ on $V$ is of finite order. Each $\th$-orbit on $\Omega$ is a finite set. Therefore

(a) Each $\th$-twisted conjugacy class of $\tilde W$ intersects only finitely many $W_a$ cosets. 

Following \cite{HN1}, we define two arithmetic invariants on $\text{cl}_{\th}(\tW)$. 

Note that each $\th$-twisted conjugacy class of $\tW$ lies in a single $\th$-orbit on the cosets $\tW/W_a \cong \Omega$. Let $\Omega_{\th}$ be the set of $\th$-coinvariants of $\Omega$. The projection map $\tW \to \Omega_\th$ factors through $\text{cl}_{\th}(\tW)$. The induced map $\k: \text{cl}_{\th}(\tW) \to \Omega_\th$ gives one invariant. 

Recall that $\tW=X_*(Z)_{\Gal(\bar F/F)} \rtimes W_0$. We regard $\th$ as an element in the group $\tW \rtimes \<\th\>$ and we extend the length function $\ell$ on $\tW$ to $\tW \rtimes \<\th\>$ by requiring that $\ell(\th)=0$. For any $w \in \tW$, $(w \th)^{m |W_0|} \in X_*(Z)_{\Gal(\bar F/F)},$ where $m$ is the order of the automorphism $\th$ on $\tW$ and $|W_0|$ is the order of the relative Weyl group. 

For $w \in \tW$, we set $\nu_w=\l/n \in V$, where $n$ is a positive integer and $\l \in X_*(Z)_{\Gal(\bar F/F)}$ with $(w \th)^n=t^\l$. It is easy to see that $\nu_w$ is independent of the choice of the power $n$. We $\nu_w$ the Newton point of $w$. Let $\bar \nu_w$ be the unique dominant element in the $W_0$-orbit of $\nu_w$. The map $w \mapsto \bar \nu_w$ is constant on each conjugacy class of $\tW$. This gives another invariant. 

Let $V_+$ be the set of dominant elements in $V$. Set $\aleph=\Omega_\th \times V_+$. We have a map $$\pi=(\k, \bar \nu): \text{cl}_{\th}(\tW) \to \aleph.$$

\subsection{} Let $\tW_{\min}$ be the subset of $\tW$ consisting of elements of minimal length in their $\th$-twisted conjugacy classes of $\tW$. For any $\nu=(\t, v) \in \aleph$, we set $$X_\nu=\cup_{w \in \tW_{\min}; \pi(w)=\nu} \CI \dot w \CI \quad \text{ and } \quad G(\nu)=G \cdot_\th X_\nu.$$ Here $\cdot_\th$ means the $\th$-twisted conjugation action of $G$ defined by $g \cdot_\th g'=g g' \th(g) \i$. We call $G(\nu)$ the {\it Newton stratum} of $G$ corresponding to $\nu$. 

The main result of this section is 

\begin{theorem}\label{newton-gf}
We have the Newton decomposition $$G=\bigsqcup_{\nu \in \aleph} G(\nu).$$
\end{theorem}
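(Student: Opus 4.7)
The statement separates into two parts: covering, $G = \bigcup_\nu G(\nu)$; and disjointness, $G(\nu) \cap G(\nu') = \emptyset$ whenever $\nu \ne \nu'$. I would treat them separately.

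For the covering, starting from the Iwahori--Bruhat decomposition $G = \bigsqcup_{w \in \tW} \CI \dot w \CI$, it suffices to show every $g \in \CI \dot w \CI$ is $\th$-twisted $G$-conjugate to an element of some $\CI \dot w' \CI$ with $w' \in \tW_{\min}$ lying in the $\th$-twisted conjugacy class of $w$. The tool is the group-theoretic reduction method of \cite{He-14} in its $\th$-twisted form: for each simple reflection $s \in \tilde \BS$ the combinatorial step $w \mapsto sw\th(s)$ on $\tW$ has a geometric counterpart realized by $\th$-twisted conjugation by a lift of $s$. When $\ell(sw\th(s)) = \ell(w)$ this sends $\CI \dot w \CI$ bijectively onto $\CI\,\dot s\,\dot w\,\th(\dot s)^{-1}\CI$; when $\ell(sw\th(s)) < \ell(w)$ each element of $\CI \dot w \CI$ is $\th$-twisted $G$-conjugate into a strictly shorter Bruhat cell (namely $\CI \dot{sw} \CI$ or $\CI \dot{w\th(s)} \CI$). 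Combined with the reduction theorem of \cite{HN1} --- any $w \in \tW$ can be carried to a minimal-length element of its $\th$-twisted class by a sequence of such length-non-increasing moves --- iteration produces the required $h \in G$ with $h \cdot_\th g \in \CI \dot w' \CI \subset X_{\pi(w)}$.

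For the disjointness, I would show that the pair $(\k, \bar\nu)$ extends to a well-defined invariant on $\th$-twisted $G$-conjugacy classes whose value on $\CI \dot w \CI$ with $w \in \tW_{\min}$ equals $\pi(w)$. The $\k$-component is essentially formal: the composition $G \to G/G_0 \cong \Omega \to \Omega_\th$ lands in an abelian group and factors through $\th$-coinvariants, so it is $\th$-twisted conjugation invariant on $G$, and on each cell $\CI \dot w \CI$ it manifestly recovers $\k(w)$. The $\bar\nu$-component will be handled through the loop-group realization $G = \BG(\breve F)^\s$: the $\th$-twisted $G$-conjugacy class of $g$ sits inside a single $\s\th$-twisted $\BG(\breve F)$-conjugacy class, and Kottwitz's classification \cite{Ko1, Ko2} then ensures that the dominant Newton point is an invariant of these classes.

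The hard part, exactly as flagged in the introduction, is that the Newton map defined combinatorially on $\tW$ need not match the ``absolute'' Newton map coming from $\BG(\breve F)$ when $\BG$ is not quasi-split, so one cannot naively pull $\bar\nu$ back along $G \hookrightarrow \BG(\breve F)$. My plan to overcome this is to restrict to the Iwahori--Matsumoto representatives $\dot w$ with $w \in \tW_{\min}$: for these distinguished lifts one can verify directly that the two recipes for the Newton point agree, which licenses the transfer of $\bar\nu$ from $\tW_{\min}$ to $G$ and thereby closes the disjointness argument.
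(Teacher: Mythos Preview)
Your covering argument is essentially the paper's: both reduce via the Iwahori--Bruhat decomposition, the reduction method of \cite{He-14} (recorded here as \S\ref{DL-red}), and the minimal-length theorem of \cite{HN1} (Theorem~\ref{min}); the paper packages this as Proposition~\ref{fini}.

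For disjointness you take a genuinely different route from the paper, and there is a real gap. The paper does \emph{not} invoke Kottwitz's classification or pass to $\BG(\breve F)$. Instead (\S\ref{2.5}) it argues directly inside $G$: given $g h_1 \th(g)^{-1} = h_2$ with $h_i \in \CI \dot u_i \dot x_i \CI$ for standard triples $(x_i,K_i,u_i)$ supplied by Theorem~\ref{str}, and with distinct $V$-factors, one iterates the relation to high $\th$-twisted powers. Straightness gives $(x_i\th)^{n_0}=t^{\l_i}$ with $\l_1 \notin W_0\cdot\l_2$, so $h_i\th(h_i)\cdots\th^{n_0 l-1}(h_i)$ lies in a bounded neighborhood of $\CI t^{l\l_i}\CI$; but these are conjugated by the \emph{fixed} element $g$, and for $l$ large this forces a length contradiction. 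No absolute Newton map is needed, and Theorem~\ref{str} (the standard-triple form of minimal elements) is the structural input you do not mention.

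Your proposed route has two unfilled steps. First, \cite{Ko1,Ko2} treat $\s$-conjugacy; the passage to $\s\th$-conjugacy with a possibly outer $\th$ is not literally covered there and needs its own argument. Second, and more seriously, the sentence ``for these distinguished lifts one can verify directly that the two recipes for the Newton point agree'' is exactly the difficulty the introduction flags as subtle in the non-quasi-split case, and you give no mechanism for carrying it out. Even if the two Newton points matched at the single element $\dot w$, you would still need the absolute invariant to be \emph{constant} on all of $\CI \dot w \CI$ for $w\in\tW_{\min}$, and you would need the resulting map from the relative $\nu$ to the absolute Newton point to be injective. Neither is addressed. The paper's elementary growth argument sidesteps all of this by never leaving $G$.
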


\

The proof is based on some remarkable combinatorial properties of the minimal length elements of $\tW$ established in \cite{HN1} and the reduction method in \cite{He-14}. 

\subsection{} We follow \cite{HN1}. For $w, w' \in \tW$ and $s \in \tilde \BS$, we write $w \xrightarrow{s}_\th w'$ if $w'=s w \th(s)$ and $\ell(w') \le \ell(w)$.  We write $w \to_\th w'$ if there is a sequence $w=w_0, w_1, \cdots, w_n=w'$ of elements in $\tW$ such that for any $k$, $w_{k-1} \xrightarrow{s}_\th w_k$ for some $s \in \tilde \BS$. We write $w \approx_\th w'$ if $w \to_\th w'$ and $w' \to_\th w$. It is easy to see that if $w \to_\th w'$ and $\ell(w)=\ell(w')$, then $w \approx_\th w'$. 

The following result is proved in \cite[Theorem A]{HN1}.

\begin{theorem}\label{min}
Let $w \in \tW$. Then there exists an element $w' \in \tW_{\min}$ with $w \to_\th w'$. 
\end{theorem}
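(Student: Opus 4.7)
The plan is to induct on the length $\ell(w)$. The base case $\ell(w)=0$ is immediate, since then $w\in\Omega$ is itself of length zero and hence of minimal length in its $\th$-twisted conjugacy class. For the inductive step, if $w\in\tW_{\min}$ we simply take $w'=w$. Otherwise, it suffices to exhibit a single chain $w\to_\th w_1$ with $\ell(w_1)<\ell(w)$, since the inductive hypothesis will then supply $w_1\to_\th w'$ with $w'\in\tW_{\min}$, and concatenating yields $w\to_\th w'$.

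The crux is therefore to show: if $w\notin\tW_{\min}$, then some element $\tilde w$ of the $\approx_\th$-equivalence class $[w]=\{w'':w\approx_\th w''\}$ admits a simple reflection $s\in\tilde{\BS}$ with $\ell(s\tilde w\th(s))<\ell(\tilde w)=\ell(w)$. I would approach this via the class of \emph{$\th$-straight} elements, i.e.\ those $w\in\tW$ satisfying $\ell((w\th)^n)=n\ell(w)$ for all $n\ge1$. A short computation, using the fact that length in $\tW$ is additive along strictly decreasing galleries, shows that every $\th$-straight element lies in $\tW_{\min}$. Thus it is enough to establish the following reduction lemma: every $w\in\tW$ admits a chain $w\to_\th w''$ with $w''$ either $\th$-straight or satisfying $\ell(w'')<\ell(w)$.

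To prove the reduction lemma I would read off, from the Newton point $\bar\nu_w$, the standard parabolic subset $J\subset\tilde{\BS}$ consisting of those simple reflections fixing a point on the ``axis'' in $\sA$ determined by $\bar\nu_w$. Using alcove-walk / gallery arguments on $\sA$ together with the elementary properties of cyclic shifts $\xrightarrow{s}_\th$, one aims to show that length-preserving twisted conjugations suffice to bring $w$ into an element $\tilde w$ which decomposes compatibly with the double-coset structure $W_J\bs\tW/W_{\th(J)}$, specifically $\tilde w=xv$ with $x$ a minimal-length double-coset representative and $v\in W_J$. The problem then reduces, via the finite Coxeter system $(W_J,J)$, to the classical theorem of Geck--Pfeiffer on minimal length elements of twisted conjugacy classes in finite Coxeter groups, combined with induction on $|J|$ (equivalently, on semisimple rank). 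If at some stage one of the cyclic shifts already decreases length we are done; otherwise the output is $\th$-straight.

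The hardest part, and the combinatorial heart of the argument, is the decomposition step itself: proving that $\approx_\th$-moves are rich enough to produce the factorization $\tilde w=xv$. This demands fine control over how the affine transformation $w\th$ permutes alcoves adjacent to the walls in $\sA$, precise bookkeeping of when a twisted conjugation by a simple reflection preserves versus decreases length, and careful handling of the twist by $\Omega_\th$ through the invariant $\k$. Once this decomposition is in hand, the induction closes: a non-minimal $w$ cannot be $\th$-straight (since $\th$-straight elements are already of minimal length), so the reduction lemma furnishes the desired shorter $w''$, and the inductive hypothesis applied to $w''$ completes the proof.
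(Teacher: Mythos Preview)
The paper does not prove this theorem; it is quoted verbatim from \cite[Theorem~A]{HN1} (He--Nie), so there is no in-paper argument to compare against. Your outline does point in the general direction of the actual proof in \cite{HN1}, but two issues deserve to be flagged.

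First, your ``reduction lemma'' is false as stated. Take any $w\in\tW_{\min}$ lying in a \emph{non-straight} $\th$-twisted conjugacy class (such classes exist whenever some finite $W_K$ is nontrivial; compare Theorem~\ref{str}). Since $\to_\th$ preserves the $\th$-conjugacy class and $w$ already has minimal length there, no chain from $w$ reaches a strictly shorter element; and no element of that class is $\th$-straight. Hence no admissible $w''$ exists. What your induction actually uses is only the restriction of the lemma to $w\notin\tW_{\min}$, but that restriction is essentially the theorem itself, so routing the argument through straightness buys nothing at this stage.

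Second, and more substantively, the step you rightly identify as ``the hardest part''---producing the factorization $\tilde w = x v$ by $\approx_\th$-moves alone---is the entire content of the result. In \cite{HN1} this is achieved through a careful analysis combining the geometry of alcoves and affine hyperplanes, a notion of \emph{partial $\th$-conjugation} by finite parabolic subgroups $W_K$, and an induction organised by the Newton point that is considerably more intricate than a direct appeal to Geck--Pfeiffer. Your sketch names plausible ingredients (Newton point, a parabolic subset $J$, reduction to the finite case) but does not supply the mechanism that makes them fit together; as written it is a strategy rather than a proof.
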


\subsection{}\label{DL-red} Now we recall the reduction method in \cite{He-14}. 

Let $w \in \tW$ and $s \in \tilde \BS$. We have an explicit formula on the multiplication of Bruhat cells \[\CI \dot s \CI \dot w \CI=\begin{cases} \CI \dot s \dot w \CI, & \text{ if } s w>w; \\ \CI \dot s \dot w \CI \sqcup \CI \dot w \CI, & \text{ if } s w<w.\end{cases}\] \[\CI \dot w \CI \dot s \CI=\begin{cases} \CI \dot w \dot s \CI, & \text{ if } w s>w; \\ \CI \dot w \dot s \CI \sqcup \CI \dot w \CI, & \text{ if } w s<w.\end{cases}\]

We have the following simple but very useful properties: 

\begin{enumerate}

\item $G \cdot_\th \CI \dot w \CI=G \cdot_\th \CI \dot w' \CI$ if $w \approx_\th w'$; 

\item $G \cdot_\th \CI \dot w \CI=G \cdot_\th \CI \dot s \dot w \CI \cup G \cdot_\th \CI \dot s \dot w \th(\dot s) \CI$ for $s \in \tilde \BS$ with $s w s<w$. 
\end{enumerate}

\begin{proposition}\label{fini}
Let $X$ be a compact subset of $G$. Then there exists a finite subset $\{\nu_1, \cdots, \nu_k\}$ of $\aleph$ such that $$X \subset \cup_i G(\nu_i).$$
\end{proposition}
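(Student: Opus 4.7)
The plan is to combine the compactness of $X$ with the Bruhat decomposition $G = \bigsqcup_{w \in \tW} \CI \dot w \CI$ and the reduction method of \S\ref{DL-red}. First I would observe that each Iwahori double coset $\CI \dot w \CI$ is open in $G$ and the cosets partition $G$, so the compact set $X$ meets only finitely many of them; hence $X \subset \bigcup_{w \in S} \CI \dot w \CI$ for some finite $S \subset \tW$. It therefore suffices to show that for each $w \in \tW$ the twisted orbit $G \cdot_\th \CI \dot w \CI$ is contained in a finite union of Newton strata $G(\nu)$.

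I would prove this by strong induction on $\ell(w)$. In the base case $w \in \tW_{\min}$ we have $\CI \dot w \CI \subset X_{\pi(w)}$ by definition of $X_{\pi(w)}$, so $G \cdot_\th \CI \dot w \CI \subset G(\pi(w))$, a single stratum. For the inductive step, suppose $w \notin \tW_{\min}$. Theorem \ref{min} supplies a chain $w = w_0 \xrightarrow{s_1}_\th w_1 \xrightarrow{s_2}_\th \cdots \xrightarrow{s_n}_\th w_n$ with $w_n \in \tW_{\min}$; since $\ell$ is non-increasing along such chains and $\ell(w_n) < \ell(w)$, there is a first index $k$ at which $\ell(w_k) < \ell(w_{k-1})$. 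Then $w_0, w_1, \ldots, w_{k-1}$ all have length $\ell(w)$, so $w \approx_\th w_{k-1}$, and property (1) of \S\ref{DL-red} gives $G \cdot_\th \CI \dot w \CI = G \cdot_\th \CI \dot w_{k-1} \CI$. Setting $s := s_k$, one has $\ell(s w_{k-1} \th(s)) = \ell(w_{k-1}) - 2$, so property (2) yields
$$G \cdot_\th \CI \dot w \CI \;=\; G \cdot_\th \CI \dot s \dot w_{k-1} \CI \;\cup\; G \cdot_\th \CI \dot s \dot w_{k-1} \th(\dot s) \CI,$$
where $s w_{k-1}$ and $s w_{k-1} \th(s)$ both have length strictly less than $\ell(w)$. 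By the inductive hypothesis each piece on the right is contained in a finite union of Newton strata, completing the inductive step.

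The main obstacle I foresee is producing an $s \in \tilde \BS$ to which property (2) applies with a strict length drop. That is precisely what the chain from Theorem \ref{min} provides: as soon as $w \notin \tW_{\min}$, any reduction chain to a minimal-length representative must decrease the length somewhere, and the first such step hands us an $s$ together with an intermediate element $w_{k-1}$ that is $\approx_\th$-equivalent to $w$. Once this is in hand, the induction closes immediately, and the proposition follows by summing the finitely many contributions from the $w \in S$.
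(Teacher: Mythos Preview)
Your proposal is correct and follows essentially the same approach as the paper's own proof: reduce to a single Iwahori double coset by compactness, then induct on $\ell(w)$ using Theorem~\ref{min} together with properties (1) and (2) of \S\ref{DL-red}. The only difference is cosmetic---you spell out explicitly how the reduction chain from Theorem~\ref{min} produces the intermediate element $w_{k-1}\approx_\th w$ and the simple reflection $s$ with $s w_{k-1}\th(s)<w_{k-1}$, whereas the paper simply asserts the existence of such $w'$ and $s$ as a consequence of that theorem.
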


\begin{proof}
Since any compact subset of $G$ is contained in a finite union of $\CI$-double cosets, it suffices to prove the statement for $\CI \dot w \CI$ for any $w \in \tW$. 

We argue by induction on $\ell(w)$. 

If $w \in \tW_{\min}$, the statement is obvious. If $w \notin \tW_{\min}$, then by Theorem \ref{min}, there exists $w' \in \tW$ and $s \in \tilde \BS$ such that $w \approx_\th w'$ and $s w' \th(s)<w'$. Then by \S \ref{DL-red}, $$G \cdot_\th \CI \dot w \CI=G \cdot_\th \CI \dot w' \CI \subset G \cdot_\th \CI \dot s \dot w' \th(\dot s) \CI \cup G \cdot_\th \CI \dot s \dot w' \CI.$$ Note that $\ell(s w'), \ell(s w' \th(s))<\ell(w)$, the statement for $w$ follows from inductive hypothesis on $s w'$ and on $s w' \th(s)$. 
\end{proof}

\subsection{} Since $G=\sqcup_{w \in \tW} \CI \dot w \CI$, by Proposition \ref{fini}, $G=\cup_{\nu \in \aleph} G(\nu)$. In order to show that $\cup_{\nu \in \aleph} G(\nu)$ is a disjoint union, we use some properties on the straight conjugacy classes of $\tW$. 

By definition, an element $w \in \tW$ is {\it $\th$-straight} if $\ell((w \th)^k)=k \ell(w)$ for all $k \in \BN$. A $\th$-twisted conjugacy class is {\it straight} if it contains a $\th$-straight element. It is easy to see that the $\th$-straight elements in a given straight $\th$-twisted conjugacy class $\CO$ are exactly the minimal length elements in $\CO$. 

Let $\text{cl}_{\th}(\tW)_{str}$ be the set of straight conjugacy classes. It is proved in \cite[Theorem 3.3]{HN1} that 

\begin{theorem}
The map $\pi: \text{cl}_{\th}(\tW) \to \aleph$ induces a bijection between $\text{cl}_{\th}(\tW)_{str}$ and $Im(\pi)$. 
\end{theorem}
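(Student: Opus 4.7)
The plan is to prove the map $\pi$ restricted to $\text{cl}_{\th}(\tW)_{str}$ is injective and surjective onto $Im(\pi)$, handling each separately.

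For \emph{surjectivity}, given $\nu = (\tau, v) \in Im(\pi)$, I would start with any $\th$-twisted conjugacy class $\CO$ with $\pi(\CO) = \nu$ and produce a straight class with the same image. Pick $w \in \CO \cap \tW_{\min}$ using Theorem \ref{min}. The dominant Newton point $v = \bar\nu_w$ lies on a well-defined face of the dominant Weyl chamber, which singles out a standard Levi-type subgroup $M = M_v$ whose Iwahori-Weyl group $\tW^M$ contains a conjugate of $w$ (after reducing, since minimal length elements support such reductions). Inside $\tW^M$, the element $w$ should become ``basic'' in the sense that $(w\th)^n$ is central modulo length-zero, and I expect to show that any such basic element is $\th$-straight. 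The resulting straight element has the same $\kappa$-image and Newton point as $w$, proving surjectivity.

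For \emph{injectivity}, suppose $x, x' \in \tW$ are both $\th$-straight with $\pi(x) = \pi(x') = (\tau, v)$. Since both have the same dominant Newton point $v$, both lie (up to $\th$-conjugation) in the Iwahori-Weyl group $\tW^M$ of the Levi $M = M_v$. Moreover, straightness forces their action on $V$ to be by translation along $v$ inside $\tW^M$, so they are both length-zero modulo the translation part, i.e., they are basic elements of $\tW^M$. The set of basic elements up to $\th$-conjugacy in $\tW^M$ is controlled by the image in $\Omega_\th^M$, and the assumption $\kappa(x) = \kappa(x') = \tau$ in $\Omega_\th$ together with the fact that they live over the same Newton point should force their images in $\Omega_\th^M$ to coincide. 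Hence they are $\th$-conjugate inside $\tW^M$, and therefore inside $\tW$.

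The main obstacle will be the reduction to the Levi $\tW^M$: this requires showing that a $\th$-straight element with Newton point on the face determined by $v$ is automatically conjugate into $\tW^M$, which relies on careful analysis of the length function on $\tW \rtimes \langle \th \rangle$ and of how the minimal-length/basic condition interacts with the $\th$-twist. Properly formalizing ``basic in $\tW^M$'' in the twisted setting, and checking that two such basic elements with matching $\Omega_\th$-image are genuinely $\th$-conjugate rather than just having the same invariants, is the delicate point. The rest is then combinatorial bookkeeping using the $\approx_\th$ relation and the reduction steps from \S\ref{DL-red}.
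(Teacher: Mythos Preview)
The paper does not prove this statement; it is quoted as \cite[Theorem~3.3]{HN1} and used as a black box. There is therefore no proof in the present paper to compare your proposal against.

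Your outline is nonetheless along the lines of the argument in \cite{HN1}: pass to the Levi $M_v$ attached to the Newton point, identify $\th$-straight elements there with basic (length-zero) elements, and classify basic $\th$-twisted classes by $\Omega^{M}_\th$. The reduction step you flag as the main obstacle is indeed the heart of the matter. One point in your injectivity sketch needs more than bookkeeping: the natural map $\Omega^{M}_\th \to \Omega_\th$ is not injective in general, so equality of $\kappa$ in $\Omega_\th$ together with equality of Newton points does not \emph{formally} force equality in $\Omega^{M}_\th$; closing this gap requires an explicit argument relating the translation part of a basic element of $\tW^M$ to its image under both invariants, and this is where the substantive work in \cite{HN1} lies rather than in the $\approx_\th$ combinatorics of \S\ref{DL-red}.
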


In other words, we have a well-defined map $\text{cl}_{\th}(\tW) \to \text{cl}_{\th}(\tW)_{str}$ which sends a conjugacy class $\CO$ of $\tW$ to the unique straight conjugacy class in $\pi \i(\pi(\CO))$. It is proved in \cite[Proposition 2.7]{HN1} that this map is ``compatible'' with the length function in the following sense. 

\begin{theorem}\label{str}
Let $\CO \in \text{cl}_{\th}(\tW)$ and $\CO'$ be the associated straight conjugacy class. Then for any $w \in \CO$, there exists a triple $(x, K, u)$ with $w \to_\th u x$, where $x$ is a straight element in $\CO'$, $K$ is a subset of $\tilde \BS$ such that $W_K$ is finite, $x \in {}^K \tilde W$ and $\Ad(x)\th(K)=K$, and $u \in W_K$. 
\end{theorem}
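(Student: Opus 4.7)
The plan is to argue by induction on $\ell(w)$, first reducing to a minimal length representative in $\CO$ and then extracting the triple $(x, K, u)$ from a structural decomposition of that representative.

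By Theorem \ref{min}, there exists $w' \in \tW_{\min} \cap \CO$ with $w \to_\th w'$. Since the relation $\to_\th$ is transitive, it suffices to establish the conclusion for $w'$; hence I may assume from the outset that $w$ is of minimal length in $\CO$. The natural candidate for $K$ is the set of simple reflections in $\tilde \BS$ that fix the dominant Newton vector $\bar\nu_w$; equivalently, $W_K$ is the stabilizer of $\bar\nu_w$ inside a finite Weyl group at a suitable special vertex, and $W_K$ is finite precisely because $\bar\nu_w \in V_+$. The standard minimal-coset theory of Coxeter groups then lets me write $w = u\,x$ uniquely with $u \in W_K$ and $x \in {}^K\tW$. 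A careful analysis of length-preserving cyclic shifts $\xrightarrow{s}_\th$ within the $\approx_\th$-class of $w$ among length-minimal representatives in $\CO$ allows one to arrange that $\Ad(x)\th(K) = K$: the key point is that the cyclic shift sending $u\,x$ to $x\,\th(u)$ preserves length and keeps the element inside $\CO$, and iterating this observation inside $W_K$ forces the compatibility $x\,\th(W_K)\,x^{-1} = W_K$.

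The main obstacle is verifying that $x$ is $\th$-straight, after which $x \in \CO'$ follows at once: the invariant $\pi(x)$ equals $\pi(w) = \pi(\CO)$ because $u \in W_K$ has trivial translation part, and the bijection stated just before the theorem identifies the unique straight class in $\pi^{-1}(\pi(\CO))$ as $\CO'$. For straightness one must rule out any length drop $\ell((x\th)^n) < n\,\ell(x)$, which amounts to showing that no cyclic shift of $x$ under the $\th$-twisted action reduces its length. Such a length drop, if it existed, could be propagated to a length-reducing cyclic shift of $w = u\,x$ inside $\CO$, contradicting the minimality $w \in \tW_{\min}$; the compatibility $\Ad(x)\th(K) = K$ together with $x \in {}^K\tW$ is precisely what guarantees that the $W_K$-factor $u$ cannot absorb this contradiction. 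Packaging these compatibility conditions across all iterates $(x\th)^n$ is the delicate part, since $\CO$ is non-straight in general and the factorization $w = u\,x$ must be shown to be genuinely canonical for every length-minimal representative.
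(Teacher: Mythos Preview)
First, note that the paper does not supply its own proof of this statement: it is quoted from \cite[Proposition~2.7]{HN1}. So there is no in-paper argument to compare against; I assess your proposal on its own terms.

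Your proposed definition of $K$ is where the argument breaks. You take $K$ to be the set of $s \in \tilde\BS$ that fix the dominant Newton vector $\bar\nu_w$. But the elements of $\tilde\BS$ are \emph{affine} simple reflections, and ``fix $\bar\nu_w$'' is ambiguous. If you mean the induced linear action via $\tW \to W_0$, then whenever $\bar\nu_w$ is central (for instance $\bar\nu_w = 0$) every $s \in \tilde\BS$ fixes it, so $K = \tilde\BS$ and $W_K = W_a$ is infinite, violating the conclusion. Already in affine type $\tilde A_1$ this kills your construction for every conjugacy class with central Newton point---precisely the classes that matter for the rigid cocenter in \S\ref{6}. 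In \cite{HN1} the subset $K$ is not read off from $\bar\nu_w$ alone; one first produces a specific straight element $x$ and then takes $K$ adapted to $x$ so that $x \in {}^K\tW$ and $\Ad(x)\th(K)=K$. The existence of such a compatible pair $(x,K)$ is the substantive content of the theorem, not a by-product of a canonical factorization.

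Even granting a workable $K$, the remaining steps are assertions rather than arguments. The claim that length-preserving cyclic shifts inside $W_K$ force $\Ad(x)\th(K)=K$ is not justified: the shift $u x \mapsto x\,\th(u)$ need not stay in $W_K \cdot {}^K\tW$ in the required form unless you already know $x\,\th(W_K)\,x^{-1} \subset W_K$. Likewise, ``propagating'' a hypothetical length drop in $(x\th)^n$ back to a length-reducing cyclic shift of $w = u x$ requires controlling how the $W_K$-factor interacts with iterated $\th$-twisted powers, and this is exactly where the conditions $\Ad(x)\th(K)=K$ and $x \in {}^K\tW$ do real work (compare the computation \S\ref{2.5}~(a)). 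Without that mechanism the straightness of $x$ remains unproven.
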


\begin{remark}
We call $(x, K, u)$ a {\it standard triple} associated to $w$. By Theorem \ref{str} and \S\ref{DL-red} (1), we have the following alternative definition of Newton stratum \[G(\nu)=\cup_{(x, K, u) \text{ is a standard triple}; u x \in \tW_{\min}, \pi(x)=\nu} G \cdot \CI \dot u \dot x \CI.\]

\end{remark}

\begin{corollary}\label{finite-w}
For any $\nu \in \aleph$, there are only finitely many $w \in \tW_{\min}$ with $\pi(w)=\nu$. In particular, each fiber of the map $\pi: \text{cl}_{\th}(\tW) \to \aleph$ is finite. 
\end{corollary}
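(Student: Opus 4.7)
The plan is to prove the first claim---that $\{w \in \tW_{\min} : \pi(w) = \nu\}$ is finite for every $\nu \in \aleph$---from which the ``in particular'' part is immediate: every $\th$-twisted conjugacy class in $\pi \i(\nu)$ contains at least one minimal-length representative, and distinct classes contribute distinct elements, so $|\pi \i(\nu)| \le |\{w \in \tW_{\min} : \pi(w) = \nu\}|$.

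If $\nu \notin \Im(\pi)$ the set is empty, so I fix $\nu \in \Im(\pi)$. By the bijection part of Theorem \ref{str} (between $\text{cl}_\th(\tW)_{str}$ and $\Im(\pi)$), there is a unique straight class $\CO'$ with $\pi(\CO') = \nu$; write $\ell_0$ for the common length of its straight elements. For any $w \in \tW_{\min}$ with $\pi(w) = \nu$, lying in a conjugacy class $\CO$ mapping to $\CO'$, Theorem \ref{str} supplies a standard triple $(x, K, u)$ with $w \to_\th u x$. Since $\to_\th$ is length non-increasing and $w$ is already minimal in $\CO$, one obtains $\ell(w) = \ell(u x) = \ell(u) + \ell(x) = \ell(u) + \ell_0$, using $u \in W_K$ and $x \in {}^K \tW$. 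As $K$ ranges over the finitely many subsets of $\tilde \BS$ with $W_K$ finite, $\ell(u)$ is bounded by a constant $N$, giving the uniform bound $\ell(w) \le \ell_0 + N =: M$.

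Next I would bound the $W_a$-cosets in which such $w$ can lie. Each elementary step $w \mapsto s w \th(s)$ multiplies by elements $s, \th(s) \in \tilde \BS \subset W_a$ (using $\th$-stability of the alcove, hence of $\tilde \BS$), so $\to_\th$ preserves the image in $\tW / W_a \cong \Omega$. Consequently $w$, $u x$ and $x$ all lie in the same $W_a$-coset. By \S\ref{k-n}(a), the class $\CO'$ meets only finitely many $W_a$-cosets, and inside each $W_a$-coset there are only finitely many elements of any fixed length (since $W_a$ is a Coxeter system of finite rank, so its length balls are finite). Thus only finitely many $W_a$-cosets can host such a $w$, and within each such coset only finitely many elements have length $\le M$; the two bounds together finish the proof.

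The only delicate point is extracting from Theorem \ref{str} the three simultaneous features that I need: $u x$ is $\th$-conjugate to $w$ (same conjugacy class), lies in the same $W_a$-coset (because the passage is via $\to_\th$, not merely via $\th$-conjugation in $\tW$), and satisfies $\ell(u x) = \ell(u) + \ell_0$ (because $x$ is straight of length $\ell_0$ and $x \in {}^K\tW$). Once those features are in hand, the rest is a routine assembly of the coset-intersection finiteness from \S\ref{k-n}(a) with the finite-ball property of the Coxeter system $W_a$.
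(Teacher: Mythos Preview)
Your proof is correct and follows essentially the same route as the paper's: bound $\ell(w)$ using the standard triple from Theorem~\ref{str}, restrict the possible $W_a$-cosets via \S\ref{k-n}(a), and then invoke finiteness of length balls in each coset. One minor point: the bijection $\text{cl}_\th(\tW)_{str} \leftrightarrow \Im(\pi)$ is the content of the theorem immediately \emph{preceding} Theorem~\ref{str}, not part of Theorem~\ref{str} itself, so your citation should be adjusted; otherwise your argument is slightly more explicit than the paper's (you justify the coset-preservation under $\to_\th$ and the equality $\ell(w)=\ell(u)+\ell_0$, which the paper leaves implicit).
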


\begin{proof}
Let $\nu \in Im(\pi)$ and $\CO'$ be the associated straight conjugacy class. Let $l$ be the length of any straight element in $\CO'$. Note that there are only finitely many $K \subset \tilde \BS$. In particular, 

(a) $\max\{\ell(u); u \in W_K \text{ for some } K \subset \tilde \BS \text{ with } W_K \text{ finite}\}$ is finite. 

We denote this number by $n$. Then by Theorem \ref{str}, for any $w \in \tW_{\min}$ with $\pi(w)=\nu$, $\ell(w) \le l+n$. By \ref{k-n} (a), $\CO'$ intersects only finitely many $W_a$ cosets and hence any element $w \in \tW_{\min}$ with $\pi(w)=\nu$ is contained in one of those cosets. Since in a given coset of $W_a$, there are only finitely many elements of a given length, there are only finitely many such $w$. 
\end{proof}

\subsection{Proof of Theorem \ref{newton-gf}}\label{2.5} We have shown that $G=\cup_{\nu \in \aleph} G(\nu)$. It remains to show that it is a disjoint union. 

Let $\nu_1 \neq \nu_2$ be elements in $\aleph$. It is easy to see that if $\nu_1$ and $\nu_2$ have different $\Omega_\th$-factor, then $G(\nu_1) \cap G(\nu_2)=\emptyset$. Now assume that $\nu_1$ and $\nu_2$ have the same $\Omega_\th$-factor. Then they have different $V$-factor. 

The remaining part of the proof is a bit technical and we first explain the main idea. Suppose that $G(\nu_1) \cap G(\nu_2) \neq \emptyset$. Then there exists an element $g_1 \in G(\nu_1)$ and $g_2 \in G(\nu_2)$, and an element $g \in G$ that (twisted) conjugate $g_1$ to $g_2$. Then $g$ also (twisted) conjugate a (twisted) power of $g_1$ to a (twisted) power of $g_2$. But as the Newton factor of $\nu_1$ and $\nu_2$ are different, the ``difference'' between the (twisted) $n$th powers of $g_1$ and $g_2$ goes to ``infinity'' as $n$ goes to infinity, and thus can not be (twisted) conjugated by a fixed element $g \in G$.

Now we go back to the proof. Suppose that $G(\nu_1) \cap G(\nu_2) \neq \emptyset$. By definition, there exists $w_1, w_2 \in \tW_{\min}$ with $\pi(w_i)=\nu_i$ and that $G \cdot_\th \CI \dot w_1 \CI \cap G \cdot_\th \CI \dot w_2 \CI \neq \emptyset$. By Theorem \ref{str}, there exists standard triples $(x_i, K_i, u_i)$ associated to $w_i$ for $i=1, 2$. Since $w_i \in \tW_{\min}$, $w_i \approx_\th u_i x_i$. By \S\ref{DL-red} (1), $G \cdot_\th \CI \dot w_i \CI=G \cdot_\th \CI \dot u_i \dot x_i \CI$. Hence
$$G \cdot_\th \CI \dot u_1 \dot x_1 \CI \cap G \cdot_\th \CI \dot u_2 \dot x_2 \CI \neq \emptyset.$$

Let $h_i \in \CI \dot u_i \dot x_i \CI$ and $g \in G$ with $g h_1 \th(g) \i=h_2$. By our assumption on $x_i$ and $K_i$, we have for any $n \in \BN$, 
\begin{align*}\tag{a}(\CI \dot u_i \dot x_i \CI) \th(\CI \dot u_i \dot x_i \CI) \cdots \th^{n-1}(\CI \dot u_i \dot x_i \CI) & \subset (\cup_{w \in W_{K_i}} \CI \dot w \CI) (\CI \dot x_i \th(\dot x_i) \cdots \th^{n-1}(\dot x_i) \CI) \\ &=(\CI \dot x_i \th(\dot x_i) \cdots \th^{n-1}(\dot x_i) \CI) (\cup_{w \in W_{K_i}} \CI \dot w \CI).\end{align*}

Thus $h_i \th(h_i) \cdots \th^{n-1}(h_i) \in  (\CI \dot x_i \th(\dot x_i) \cdots \th^{n-1}(\dot x_i) \CI) (\cup_{w \in W_{K_i}} \CI \dot w \CI)$. 

Let $n_0$ be a positive integer such that $(x_i \th)^{n_0}=t^{\l_i} \in \tW \rtimes \<\th\>$ for some $\l_i \in X_*(Z)_{\Gal(\bar F/F)}$. Since the $V$-factor of $\nu_1$ and $\nu_2$ are different, $\l_1$ is not in the $W_0$-orbit of $\l_2$. For any $l \in \BN$, we have $$g h_1 \th(h_1) \cdots \th^{n_0 l-1}(h_1) \th^{n_0 l}(g) \i=h_2 \th(h_2) \cdots \th^{n_0 l-1}(h_2).$$ Hence $$g (\CI t^{l \l_1} \CI) (\cup_{w \in W_{K_1}} \CI \dot w \CI) \th^{n_0 l}(g) \i \cap (\CI t^{l \l_2} \CI) (\cup_{w \in W_{K_2}} \CI \dot w \CI) \neq \emptyset$$ and thus $$g (\CI t^{l \l_1} \CI) (\cup_{w \in W_{K_1}} \CI \dot w \CI) \th^{n_0 l}(g) \i (\cup_{w \in W_{K_2}} \CI \dot w \CI) \cap (\CI t^{l \l_2} \CI) \neq \emptyset.$$

We have that $g \in \CI \dot z \CI$ for some $z \in \tW$. Then for any $l \in \BN$, $$(\CI \dot z \CI) (\CI t^{l \l_1} \CI) (\cup_{w \in W_{K_1}} \CI \dot w \CI) (\CI \th^{n_0 l}(\dot z) \i \CI) (\cup_{w \in W_{K_2}} \CI \dot w \CI) \cap \CI t^{l \l_2} \CI  \neq \emptyset.$$

Let $N_0=\max_{w \in W_{K_1}} \ell(w)+\ell(z)+\max_{w \in W_{K_2}} \ell(w)$. Then $$\CI \dot z \CI, (\cup_{w \in W_{K_1}} \CI \dot w \CI) (\CI \th^{nl}(\dot z) \i \CI) (\cup_{w \in W_{K_2}} \CI \dot w \CI) \subset \cup_{y \in W_a; \ell(y) \le N_0} \CI \dot y \CI$$ and \begin{align*} (\CI \dot z \CI) (\CI t^{l \l_1} \CI) & (\cup_{w \in W_{K_1}} \CI \dot w \CI) (\CI \th^{nl}(\dot z) \i \CI) (\cup_{w \in W_{K_2}} \CI \dot w \CI)  \\ & \subset \cup_{y, y' \in W_a; \ell(y), \ell(y') \le N_0} \CI \dot y t^{l \l_1} \dot y' \CI.\end{align*} In particular, for $l=2 N_0+2 \sharp(W_0)+1$, there exists $y, y' \in W_a$ with $\ell(y), \ell(y') \le N_0$ such that $t^{l \l_2}=y t^{l \l_1} y'$. Assume that $y=y_0 t^{\chi}$ and $y'=t^{\chi'} y'_0$ for $y_0, y'_0 \in W_0$ and $\chi, \chi' \in X_*(Z)_{\Gal(\bar F/F)}$. Then $y_0 t^{\chi+l \l_1+\chi'} y'_0=t^{l \l_2}$. Hence $l \l_2=y_0(\chi+l \l_1+\chi')$ and $l(\l_2-y_0 \l_1)=y_0(\chi+\chi')$. Notice that $\l_2-y_0 \l_1 \neq 0$. Thus \begin{align*}
\ell(t^{y_0(\chi+\chi')}) &  =\ell(t^{\chi+\chi'}) \le \ell(t^{\chi})+\ell(t^{\chi'}) \le \ell(y)+\ell(y_0)+\ell(y')+\ell(y'_0) \\ & \le 2 N_0+2 \sharp(W_0)<l \le \ell(t^{l(\l_2-y_0 \l_1)}).\end{align*} That is a contradiction. \qed

\section{Newton decompositions of $H_R$ and $\bar H_R$}

\subsection{} Recall that $G=\sqcup_{\nu \in \aleph} G(\nu)$. For $\nu \in \aleph$, let $H_R(\nu)$ be the $R$-submodule of $H_R$ consisting of functions supported in $G(\nu)$ and let $\bar H_R(\nu)$ be the image of $H_R(\nu)$ in the cocenter $\bar H_R$. We first establish the Newton decompositions of $H_R$ and $\bar H_R$. 

\begin{theorem}\label{newton-h}
We have that 

(1) $H_R=\bigoplus_{\nu \in \aleph} H_R(\nu)$. 

(2) $\bar H_R=\bigoplus_{\nu \in \aleph} \bar H_R(\nu)$. 
\end{theorem}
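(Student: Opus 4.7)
The plan is to deduce both statements directly from the Newton decomposition of $G$ (Theorem \ref{newton-gf}), together with Proposition \ref{fini} (finiteness of strata meeting a compact set) and Proposition \ref{comm} (generators of the twisted commutator).

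For part (1), I would first observe that each Newton stratum $G(\nu)$ is \emph{open} in $G$: by definition $G(\nu)=\bigcup_{g\in G} g\cdot_\theta X_\nu$, and $X_\nu$ is a union of Bruhat cells $\CI\dot w\CI$, which are open since $\CI$ is open; twisted conjugation by any fixed $g$ is a homeomorphism, so each $g\cdot_\theta X_\nu$ is open and hence so is their union. Given any $f\in H_R$, its support is compact, so by Proposition \ref{fini} it meets only finitely many strata $G(\nu_1),\dots,G(\nu_k)$. Because the strata are open and (by Theorem \ref{newton-gf}) pairwise disjoint, each product $f\cdot\mathbbm{1}_{G(\nu)}$ is again locally constant with compact support, hence lies in $H_R(\nu)$, and $f=\sum_i f\cdot\mathbbm{1}_{G(\nu_i)}$ is a finite decomposition. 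Disjointness of the strata gives uniqueness, so $H_R=\bigoplus_\nu H_R(\nu)$.

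For part (2), the key observation is that the $(\theta,\omega)$-twisted action of $G$ preserves each $H_R(\nu)$. Indeed, for $f\in H_R$ and $x\in G$, one has $\operatorname{supp}({}^x f)=x\cdot_\theta\operatorname{supp}(f)$, and each $G(\nu)$ is by construction stable under $\cdot_\theta$. Combined with Proposition \ref{comm}, which tells us that $[H_R,H_R]_{\theta,\omega}$ is spanned by elements of the form $f-{}^x f$, this shows that every such generator decomposes componentwise with respect to the direct sum of part (1): writing $f=\sum_\nu f_\nu$ with $f_\nu\in H_R(\nu)$, we get $f-{}^x f=\sum_\nu(f_\nu-{}^x f_\nu)$ with each summand lying in $H_R(\nu)$. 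Therefore
\[
[H_R,H_R]_{\theta,\omega}\;=\;\bigoplus_\nu\bigl([H_R,H_R]_{\theta,\omega}\cap H_R(\nu)\bigr),
\]
and passing to the quotient yields $\bar H_R=\bigoplus_\nu \bar H_R(\nu)$.

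There is no serious obstacle: the substantive work has already been done in establishing Theorem \ref{newton-gf} and Propositions \ref{fini} and \ref{comm}. The only points requiring care are the openness of each $G(\nu)$ and the stability of $G(\nu)$ under the twisted conjugation action, both of which are immediate from the definition $G(\nu)=G\cdot_\theta X_\nu$.
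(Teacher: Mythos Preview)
Your proposal is correct and follows essentially the same approach as the paper. For part~(1) the paper routes the argument through the \emph{admissibility} of Newton strata (\S\ref{adm}), but it also records the direct open-and-closed argument you give (each $G(\nu)$ is open, and by Theorem~\ref{newton-gf} the strata partition $G$, hence each is clopen) as a simpler alternative; your part~(2) is identical to the paper's.
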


\subsection{}\label{adm} A key ingredient in the proof is the admissibility of Newton strata.

Following Grothendieck, a subset $X$ of $G$ is called {\it admissible} if for any open compact subset $C$ of $G$, $X \cap C$ is stable under the right multiplication of an open compact subgroup of $G$. We show that 

(a) For any $\nu \in \aleph$, $G(\nu)$ is admissible. 

In \cite[Theorem A.1]{He-KR}, we show that each Frobenius-twisted conjugacy class of a loop group is admissible. The argument works for the Newton strata of $G$ as well. 

Another, and probably simpler argument to prove (a) is pointed out to me by Julee Kim. Note that each $G(\nu)$ is open and closed. Thus for any open compact subset $C$, $G(\nu) \cap C$ is open and compact. As $G(\nu) \cap C$ is open, for any $g \in G(\nu) \cap C$, there exists an open compact subgroup $\CK$ such that $g \CK \subset G(\nu) \cap C$. As $G(\nu) \cap C$ is compact, there exists finitely many elements $g_i$ and open compact subgroups $\CK_i$ such that $G(\nu) \cap C=\cup_{i} g_i \CK_i$. Set $\CK=\cap_i \CK_i$. Then $G(\nu) \cap C$ is stable under the right multiplication of $\CK$. 

\subsection{Proof of Theorem \ref{newton-h}}
(1) Let $\CK$ be an open compact subgroup of $G$ and $f \in H_R(G, \CK)$. Let $X$ be the support of $f$. Then $X$ is a compact subset of $G$. By Proposition \ref{fini}, there exists a finite subset $\{\nu_1, \cdots, \nu_k\}$ of $\aleph$ such that $X=\sqcup_i (X \cap G(\nu_i))$. By \S \ref{adm} (a), there exists an open compact subgroup $\CK'$ of $\CK$ such that for $1 \le i \le k$, $X \cap G(\nu_i)$ is stable under the right multiplication of $\CK'$. Set $f_i=f \mid_{X \cap G(\nu_i)}$. Since $f$ is invariant under the right action of $\CK$, $f_i$ is invariant under the right action of $\CK'$. Moreover, the support of $f_i$ is $X \cap G(\nu_i)$, which is a compact subset of $G$. Thus $f_i \in H_R(\nu_i)$. So $H_R=\sum_{\nu \in \aleph} H_R(\nu)$. 

On the other hand, suppose that $f=\sum_\nu f_\nu$, where $f_\nu \in H_R(\nu)$ and only finitely many $f_\nu$'s are nonzero. Since $G=\sqcup_\nu G(\nu)$, $f_\nu=f \mid_{G(\nu)}$. In particular, $f_\nu$ is determined by $f$. Thus the decomposition $H_R=\sum_{\nu \in \aleph} H_R(\nu)$ is a direct sum decomposition. 

(2) Let $f \in H_R$ and $x \in G$. By (1), we may write $f$ as $f=\sum_{\nu} f_{\nu}$, where $f_{\nu} \in H_R(\nu)$. By definition, ${}^x f_{\nu} \in H_R(\nu)$. Thus $f-{}^x f=\sum_{\nu} (f_{\nu}-{}^x f_{\nu}) \in \oplus_{\nu} ([H_R, H_R]_{\th, \omega} \cap H_R(\nu))$. The direct sum decomposition $\bar H_R=\oplus_{\nu} \bar H_R(\nu)$ follows from $H_R=\oplus_{\nu} H_R(\nu)$ and $[H_R, H_R]_{\th, \omega}=\oplus_{\nu} ([H_R, H_R]_{\th, \omega} \cap H_R(\nu))$. \qed

\section{Newton decomposition and IM type generators of $\bar H_R(G, \CI_n)$}\label{4}

\subsection{} Let $\CK$ be an open compact subgroup of $G$. Recall that $H_R(G, \CK)$ is the Hecke algebra of compactly supported, $\CK \times \CK$-invariant functions on $G$. For any $\nu \in \aleph$, we denote by $H_R(G, \CK; \nu)$ the $R$-submodule of $H_R(G, \CK)$ consisting of functions supported in $G(\nu)$. 

Note that $G(\nu)$ is not stable under the right action of $\CK$. In other words, there exists a $\CK\times\CK$-orbit $X$ that intersects at least two Newton strata. By Theorem \ref{newton-h} (1), $\mathbbm{1}_X=\sum_{\nu; G(\nu) \cap X \neq \emptyset} \mathbbm{1}_{X \cap G(\nu)}$ with $\mathbbm{1}_{X \cap G(\nu)} \in H_R(\nu)$. As $X \cap G(\nu) \neq X$ for any $\nu$, we see that $\mathbbm{1}_{X \cap G(\nu)} \notin H_R(G, \CK; \nu)$. Thus $$H_R(G, \CK) \supsetneqq \oplus_{\nu} H_R(G, \CK; \nu).$$

\subsection{}\label{4.2} Let $\CI_n$ be the $n$-th Moy-Prasad subgroup associated to the barycenter of $\fka_C$. Since the $(\CI_n)_n$ form a fundamental system of open compact subgroups of $G$, we have $H_R=\varinjlim H_R(G, \CI_n)$. 

For any $w \in \tW$, let $H_w$ be the $R$-submodule of $H_R$ consisting of locally constant functions supported in $\CI \dot w \CI$. For any $w \in \tW$ and $n \in \BN$, set $H_R(G, \CI_n)_w=H_R(G, \CI_n) \cap H_w$. 

Let $\bar H_R(G, \CI_n)$, $\bar H_R(G, \CI_n; \nu)$ and $\bar H_R(G, \CI_n)_w$ be the image of $H_R(G, \CI_n)$, $H_R(G, \CI_n; \nu)$ and $H_R(G, \CI_n)_w$ in $\bar H_R$, respectively. The main results of this section are the Newton decomposition and the Iwahori-Matsumoto type generators of $\bar H_R(G, \CI_n)$.  

\begin{theorem}\label{newton-hn}
Let $n \in \BN$ with $\omega(\CI_{n-1})=1$. Then 

(1) We have $\bar H_R(G, \CI_n)=\bigoplus_{\nu \in \aleph} \bar H_R(G, \CI_n; \nu)$. 

(2) For any $\nu \in \aleph$, we have $\bar H_R(G, \CI_n; \nu)=\sum_{w \in \tW_{\min}; \pi(w)=\nu} \bar H_R(G, \CI_n)_w$. 
\end{theorem}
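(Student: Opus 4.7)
The plan is to prove both parts of Theorem \ref{newton-hn} together by induction on length, mirroring at the Hecke algebra level the reduction method of \S\ref{DL-red} that underlies Proposition \ref{fini}. The key assertion to prove is that for every $w \in \tW$ and every $f \in H_R(G, \CI_n)_w$,
\[
\bar f \in \sum_{v \in \tW_{\min},\ \pi(v) = \pi(w)} \bar H_R(G, \CI_n)_v. \qquad (\star)
\]
This implies the theorem as follows. Restriction to Iwahori double cosets yields a direct-sum decomposition $H_R(G, \CI_n) = \bigoplus_{w \in \tW} H_R(G, \CI_n)_w$ (each $\CI \dot w \CI$ being $\CI$-, hence $\CI_n$-, biinvariant). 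Applying $(\star)$ componentwise gives $\bar H_R(G, \CI_n) = \sum_{\nu} \bar H_R(G, \CI_n; \nu)$, and the directness of this sum is inherited from $\bar H_R = \bigoplus_\nu \bar H_R(\nu)$ of Theorem \ref{newton-h}(2), since $\bar H_R(G, \CI_n; \nu) \subset \bar H_R(\nu)$. For part (2), given $f \in H_R(G, \CI_n; \nu)$ with Iwahori decomposition $f = \sum_w f_w$, the assertion $(\star)$ places each $\bar f_w$ in $\bar H_R(\pi(w))$; the directness just noted then forces $\bar f = \sum_{w : \pi(w) = \nu} \bar f_w$, which sits in the desired right-hand side.

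\emph{Proof of $(\star)$, by induction on $\ell(w)$.} The base case $w \in \tW_{\min}$ is immediate. For the inductive step, Theorem \ref{min} supplies $w \approx_\th w'$ and $s \in \tilde\BS$ with $sw'\th(s) < w'$; as in the proof of Proposition \ref{fini}, both $\ell(sw')$ and $\ell(sw'\th(s))$ are strictly less than $\ell(w') = \ell(w)$, and all three of $w$, $sw'$, $sw'\th(s)$ share the Newton invariant $\pi(w)$. It therefore suffices to prove two Hecke-algebra analogues of \S\ref{DL-red}: \emph{(I)} if $w' \approx_\th w$, then every $f \in H_R(G, \CI_n)_w$ admits $f' \in H_R(G, \CI_n)_{w'}$ with $\bar f = \ov{f'}$ in the cocenter; and \emph{(II)} if $sw\th(s) < w$, then every $f \in H_R(G, \CI_n)_w$ satisfies $\bar f \in \bar H_R(G, \CI_n)_{sw} + \bar H_R(G, \CI_n)_{sw\th(s)}$. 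Iterating (I) along the $\approx_\th$-chain and then applying (II) lands $\bar f$ in a sum of subspaces $\bar H_R(G, \CI_n)_v$ with $\ell(v) < \ell(w)$ and $\pi(v) = \pi(w)$, closing the induction.

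The engine powering both reductions is Proposition \ref{comm}: in the cocenter, $\mathbbm{1}_X \mathbbm{1}_{X'} \equiv \omega(X)^{-1} \mathbbm{1}_{X'} \mathbbm{1}_{\th(X)}$ whenever $\omega|_X$ is constant. The strategy is to decompose $f$ into characteristic functions of $\CI_n$-double cosets inside $\CI \dot w \CI$, convolve on the left with the characteristic function of a suitable $\CI_n$-piece of $\CI \dot s \CI$, cycle that factor to the right (producing a factor attached to $\th(\dot s)$), and recontract using the Iwahori multiplication rules $\CI \dot s \CI \cdot \CI \dot w \CI \subset \CI \dot{sw} \CI \cup \CI \dot w \CI$ and its right-hand analogue. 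The resulting function is supported in $\CI \dot{sw\th(s)} \CI$ together with a priori remainders in lower-length Iwahori cells; in (I) the length-preservation $\ell(w') = \ell(w)$ eliminates the remainders, while in (II) they are of strictly smaller length and are absorbed into the statement.

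\textbf{The main obstacle} is tracking $\CI_n$-biinvariance through these cycling operations. Although $\CI_n$ is normalized by $\CI$, it is \emph{not} normalized by arbitrary lifts $\dot s$ of simple reflections, so the intermediate $\CI_n h \CI_n$-pieces produced when decomposing $\mathbbm{1}_{\CI \dot s \CI}$ and cycling them across $f$ belong a priori to a finer Hecke algebra than $H_R(G, \CI_n)$. The hypothesis $\omega(\CI_{n-1}) = 1$ is precisely what is needed for $\omega$ to take a well-defined constant value on each of these $\CI_n$-double cosets, so that the twisted commutator identity applies legitimately and the cycled pieces reassemble into genuine $\CI_n$-biinvariant functions supported in single Iwahori cells. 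Carrying out this bookkeeping while simultaneously controlling the supports via the Iwahori product rules is the technical heart of the proof.
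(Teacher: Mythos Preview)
Your overall strategy matches the paper's: reduce $\bar H_R(G,\CI_n)_w$ inductively via Theorem~\ref{min} and the moves you call (I) and (II), then invoke the directness of Theorem~\ref{newton-h}. Two points need repair.

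First, the claim that $sw'$ shares the Newton invariant $\pi(w)$ is false: only $sw'\th(s)$ is $\th$-conjugate to $w'$, while $\pi(sw')$ can differ. For instance in affine type $A_1$ with $\th=\id$, take $w'=s_0 s_1 s_0$ and $s=s_0$; then $sw'=s_1 s_0$ is a nontrivial translation, whereas $w'$ is conjugate to $s_1$ and has central Newton point. Thus your induction does not establish $(\star)$ as written. The fix, which is exactly what the paper does, is to drop the constraint $\pi(v)=\pi(w)$ from $(\star)$ and prove only $\bar H_R(G,\CI_n)_w \subset \sum_{v\in\tW_{\min}} \bar H_R(G,\CI_n)_v$; parts (1) and (2) then follow from the directness in Theorem~\ref{newton-h} just as you outlined.

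Second, and more substantively, your reductions (I) and (II) presuppose that for $g\in\CI\dot w\CI$ with $sw<w$ one can factor $\mathbbm{1}_{\CI_n g \CI_n}$ as a scalar times $\mathbbm{1}_{\CI_n g_1 \CI_n}\,\mathbbm{1}_{\CI_n g_2 \CI_n}$ with $g_1\in\CI\dot s\CI$ and $g_2\in\CI\dot s\dot w\CI$; only then can you cycle the left factor to the right and land in $H_R(G,\CI_n)_{sw\th(s)}$ (case (I)) or $H_R(G,\CI_n)_{sw\th(s)}+H_R(G,\CI_n)_{sw}$ (case (II)). This factorization is Proposition~\ref{mult}, and it is the genuine technical heart of the section: its proof requires the exact cardinality $\sharp(\CI_n g\CI_n/\CI_n)=q^{\breve\ell(w)}$ of Lemma~\ref{qq}, which is established by passing to $\breve G$ and using affine root subgroup decompositions. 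Your ``main obstacle'' paragraph misidentifies the difficulty. Products of $\CI_n$-biinvariant functions are automatically $\CI_n$-biinvariant (so no ``finer Hecke algebra'' arises), and the hypothesis $\omega(\CI_{n-1})=1$ enters only through the one-line observation $\CI_n g_1\CI_n\subset g_1\CI_{n-1}$, ensuring $\omega$ is constant there. The real obstacle is showing that the convolution $\mathbbm{1}_{\CI_n g_1 \CI_n}\,\mathbbm{1}_{\CI_n g_2 \CI_n}$ is supported on a \emph{single} $\CI_n$-double coset when lengths add---that is, Proposition~\ref{mult}---and this you have neither stated nor proved.
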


\smallskip

Since $H_R=\varinjlim H_R(G, \CI_n)$, as a consequence of Theorem \ref{newton-hn} (2), we have the Iwahori-Matsumoto type generators of $\bar H_R$. 

\begin{corollary}\label{IM-h}
Let $\nu \in \aleph$. Then $$\bar H_R(\nu)=\sum_{w \in \tW_{\min}; \pi(w)=\nu} \bar H_w,$$ where $\bar H_w$ is the image of $H_w$ in $\bar H_R$. 
\end{corollary}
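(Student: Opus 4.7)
The plan is to deduce the corollary from Theorem \ref{newton-hn}(2) by passing to the direct limit $H_R = \varinjlim_n H_R(G,\CI_n)$, with the appropriate compatibility of the Newton and Iwahori-Matsumoto refinements. Nothing deep is needed beyond the identity $\bar H_R(\nu) = \bigcup_n \bar H_R(G, \CI_n; \nu)$ (for $n$ large enough so that $\omega(\CI_{n-1})=1$), which follows from the fact that any $f \in H_R(\nu)$ is locally constant and compactly supported, hence lies in $H_R(G, \CI_n; \nu) = H_R(G, \CI_n) \cap H_R(\nu)$ for all sufficiently large $n$.

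For the inclusion $\supseteq$, I would observe directly from the definitions in \S\ref{k-n} that if $w \in \tW_{\min}$ and $\pi(w) = \nu$, then $\CI \dot w \CI \subset X_\nu \subset G(\nu)$, so $H_w \subset H_R(\nu)$ and consequently $\bar H_w \subset \bar H_R(\nu)$.

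For the reverse inclusion, let $\bar f \in \bar H_R(\nu)$ and choose a lift $f \in H_R(\nu)$. Picking $n$ with $f \in H_R(G, \CI_n)$ and $\omega(\CI_{n-1})=1$ (which is possible since the $\CI_n$ form a fundamental system and $\omega$ has open kernel), we have $f \in H_R(G, \CI_n; \nu)$. Now Theorem \ref{newton-hn}(2) provides a decomposition of the class of $f$ in $\bar H_R(G,\CI_n;\nu)$ as a finite sum of images of elements of $H_R(G, \CI_n)_w \subset H_w$, with $w \in \tW_{\min}$ and $\pi(w) = \nu$. Pushing this identity along the inclusion $\bar H_R(G,\CI_n) \hookrightarrow \bar H_R$ gives $\bar f \in \sum_w \bar H_w$, as required.

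There is no real obstacle here; the only mild point worth verifying is that the map $\bar H_R(G, \CI_n) \to \bar H_R$ is well-defined and that the image of $H_R(G,\CI_n)_w$ under this map really does land in $\bar H_w$, both of which are immediate from the definitions of the subspaces $H_R(G, \CI_n)_w$ and $H_w$ and the compatibility of the commutator subspaces with the direct limit.
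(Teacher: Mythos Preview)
Your proposal is correct and follows essentially the same approach as the paper: the corollary is stated immediately after Theorem~\ref{newton-hn} with the one-line justification ``Since $H_R=\varinjlim H_R(G,\CI_n)$, as a consequence of Theorem~\ref{newton-hn}(2)\ldots'', and your argument simply spells out this direct-limit passage in detail. Note that in this paper $\bar H_R(G,\CI_n)$ is \emph{defined} as the image of $H_R(G,\CI_n)$ in $\bar H_R$ (see \S\ref{4.2}), so your final ``mild point'' about the map $\bar H_R(G,\CI_n)\to\bar H_R$ is vacuous---it is an inclusion by definition.
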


We first establish the following multiplication formula.

\begin{proposition}\label{mult}
Let $w, w' \in \tW$ with $\ell(w)+\ell(w')=\ell(w w')$. Then for any $g \in \CI \dot w \CI$ and $g' \in \CI \dot w' \CI$, $$\mathbbm{1}_{\CI_n g \CI_n} \mathbbm{1}_{\CI_n g' \CI_n}=\mu_G (\CI_n) \mathbbm{1}_{\CI_n g g' \CI_n}.$$
\end{proposition}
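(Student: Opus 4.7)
The plan is to apply the multiplication formula of \S\ref{hecke}(a) with $X=\CI_n g\CI_n$, $Y=\CI_n g'\CI_n$, and $\CK=\CI_n$ (which is pro-$p$ for $n\ge 1$). Writing $p:X\times Y\to XY$ for the multiplication map, that formula expresses $\mathbbm{1}_X\mathbbm{1}_Y$ as a weighted sum over the $\CI_n$-double cosets $\CI_n h\CI_n$ contained in $XY$, with the coefficient of $\mathbbm{1}_{\CI_n h\CI_n}$ equal to $\mu_{G\times G}(p^{-1}(\CI_n h\CI_n))/\mu_G(\CI_n h\CI_n)$. Hence the proposition reduces to two claims:
\begin{enumerate}
\item[(A)] $XY=\CI_n gg'\CI_n$ is a single $\CI_n$-double coset;
\item[(B)] $\mu_G(X)\,\mu_G(Y)=\mu_G(\CI_n)\,\mu_G(\CI_n gg'\CI_n)$.
\end{enumerate}
Indeed (A) gives $p^{-1}(\CI_n gg'\CI_n)=X\times Y$, so $\mu_{G\times G}(p^{-1}(\CI_n gg'\CI_n))=\mu_G(X)\mu_G(Y)$; together with (B) the coefficient in front of $\mathbbm{1}_{\CI_n gg'\CI_n}$ then equals exactly $\mu_G(\CI_n)$, which is the desired formula.

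Write $g=i_1\dot w i_2$, $g'=i_3\dot w' i_4$ with $i_*\in\CI$. Since $\CI_n$ is a Moy--Prasad subgroup of $\CI$ it is normal in $\CI$, so $\mathbbm{1}_X$ and $\mathbbm{1}_Y$ are $\CI$-translates of $\mathbbm{1}_{\CI_n\dot w\CI_n}$ and $\mathbbm{1}_{\CI_n\dot w'\CI_n}$ from the left and right. Pushing the outer $\CI$-elements through, and setting $k=i_2i_3\in\CI$, both (A) and (B) reduce to the intrinsic ``Weyl-element'' statements
$$\CI_n\,\dot w\,k\,\CI_n\,\dot w'\,\CI_n \;=\; \CI_n\,\dot w\,k\,\dot w'\,\CI_n\quad\text{and}\quad \mu_G(\CI_n\dot w\CI_n)\,\mu_G(\CI_n\dot w'\CI_n)=\mu_G(\CI_n)\,\mu_G(\CI_n\dot w k\dot w'\CI_n),$$
for arbitrary $k\in\CI$.

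These statements in turn rest on the affine root subgroup decomposition $\CI_n=\prod_{\alpha\in\Phi_{\mathrm{aff}}^+}U_{\alpha,n}\cdot T_n$ (taken in an order compatible with the conjugation action of $\dot w'$), together with the classical length-additive combinatorial identity
$$\{\alpha>0:(ww')^{-1}\alpha<0\}\;=\;\{\alpha>0:w^{-1}\alpha<0\}\;\sqcup\;w\cdot\{\beta>0:w'^{-1}\beta<0\}.$$
Tracking the factors $U_{\alpha,n}$ through conjugation by $\dot w^{\pm 1}$ and $\dot w'^{\pm 1}$, the middle copy of $\CI_n$ in $\CI_n\dot w k\CI_n\dot w'\CI_n$ is absorbed into the outer $\CI_n$-pieces and the product collapses to $\CI_n\dot w k\dot w'\CI_n$, giving (A); a parallel counting of Haar measures of the root subgroup factors produces the $q$-analog of the above decomposition and hence (B). The main obstacle will be the level-$n$ bookkeeping: one must verify that each $\dot w^{\pm 1}$- or $\dot w'^{\pm 1}$-conjugate of a factor $U_{\alpha,n}$ lies in $U_{\cdot,n}$ at the \emph{same} Moy--Prasad level. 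The hypothesis $\ell(w)+\ell(w')=\ell(ww')$ is precisely what excludes cancellations and prevents any drop in level, so the identities remain intact at congruence level $n$; the whole argument then parallels the classical Iwahori-Hecke braid relation $T_w T_{w'}=T_{ww'}$.
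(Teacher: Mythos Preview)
Your reduction to claims (A) and (B) via \S\ref{hecke}(a) is correct and matches the paper's starting point. From there the paper takes a different route: rather than manipulating root subgroups directly, it proves the bijectivity of the multiplication map
\[
\CI_n g\CI_n \times_{\CI_n} \CI_n g'\CI_n/\CI_n \longrightarrow G/\CI_n
\]
by a pure cardinality count. The key input is Lemma~\ref{qq}, which shows $\sharp\,\CI_n g\CI_n/\CI_n = q^{\breve\ell(w)}$ for any $g\in\CI\dot w\CI$, where $\breve\ell$ is the length over $\breve F$; this is proved by passing to $\breve F$ (where the group is quasi-split), factoring through the twisted product to reduce to simple reflections, and handling those by an explicit affine-root computation. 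One then invokes Richarz's result that $\ell(ww')=\ell(w)+\ell(w')$ implies $\breve\ell(ww')=\breve\ell(w)+\breve\ell(w')$, so source and target have the same size and the map is a bijection onto $\CI_n gg'\CI_n/\CI_n$.

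Your direct root-subgroup approach over $F$ is plausible but, as written, is a plan rather than a proof; the part you label ``the main obstacle'' is indeed the whole content. Two points deserve care if you pursue it. First, the level-$n$ bookkeeping is not just a matter of excluding cancellations: $\CI_n$ is the Moy--Prasad group at the barycenter $x_0$, so $\dot w^{-1}\CI_n\dot w$ is the Moy--Prasad group at $w^{-1}x_0$, and you must check explicitly which pieces of it are absorbed by the outer $\CI_n$'s. Second, for non-split $G$ the index $[\CI_n:\CI_n\cap\dot w\CI_n\dot w^{-1}]$ equals $q^{\breve\ell(w)}$, not $q^{\ell(w)}$; different relative affine roots contribute different powers of $q$, so your ``$q$-analog'' of the inversion-set identity must track these weights, and the additivity you ultimately need is $\breve\ell(ww')=\breve\ell(w)+\breve\ell(w')$ --- precisely the input the paper imports from \cite{Ri}. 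The paper's counting argument over $\breve F$ buys a clean reduction to the rank-one case and avoids both of these bookkeeping issues.
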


\begin{remark}
This formula was known for $GL_n$ by Howe \cite{howe2} and for split groups by Ganapathy \cite{Ga}. 
\end{remark}

\subsection{} Recall that $\breve F$ is the completion of the maximal unramified extension of $F$ with valuation ring $\CO_{\breve F}$ and residue field $\bar \kk$ and $\s$ is the Frobenius morphism of $\breve F$ over $F$. Set $\breve G=\BG(\breve F)$. We denote the Frobenius morphism on $\breve G$ again by $\s$. Then $G=\breve G^\s$. In order to prove Proposition \ref{mult}, we use some facts on $\breve G$. 

Let $S$ be a maximal $\breve F$-split torus of $\BG$ which is defined over $F$ and contains $A$. Denote by $\breve \sA$ the apartment corresponding to $S$ over $\breve F$. By \cite[5.1.20]{BT2}, we have a natural isomorphism $\sA \cong \breve \sA^\s$. Denote by $\breve \fka_C$ the unique $\s$-invariant facet of $\breve \sA$ containing $\fka_C$ and denote by $\breve \CI$ the associated Iwahori subgroup over $\breve F$. Then $\CI=\breve \CI \,^\s$. Let $\breve \tW$ be the Iwahori-Weyl group over $\breve F$ and $\breve W_a$ be the associated affine Weyl group. Let $\breve \ell$ be the corresponding length function on $\breve \tW$.

We have a natural isomorphism $\tW \cong \breve \tW \,^{\s}$. It is proved in \cite[Proposition 1.11 \& sublemma 1.12]{Ri} that for $w, w' \in \tW$, $\breve \ell(w w')=\breve \ell(w)+\breve \ell(w')$ if $\ell(w w')=\ell(w)+\ell(w')$. 

\begin{lemma}\label{fix}
Let $g \in G$. Then $(\breve \CI_n g \breve \CI_n/\breve \CI_n)^\s=\CI_n g \CI_n/\CI_n$.
\end{lemma}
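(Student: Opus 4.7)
The forward inclusion $\CI_n g \CI_n / \CI_n \subseteq (\breve \CI_n g \breve \CI_n / \breve \CI_n)^\s$ is immediate from $\CI_n = \breve \CI_n^\s$ and $g \in G$: any coset $ig\CI_n$ with $i \in \CI_n$ is sent to the $\s$-stable coset $ig\breve \CI_n$ by the natural map, and this map is injective because $\CI_n = \breve \CI_n \cap G$. The substance of the lemma is the surjectivity onto $\s$-fixed cosets.

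My plan is to present $\breve \CI_n g \breve \CI_n / \breve \CI_n$ as an orbit space and then apply Lang's theorem. The group $\breve \CI_n$ acts transitively on $\breve \CI_n g \breve \CI_n/\breve \CI_n$ by left multiplication, and the stabilizer of the base point $g\breve \CI_n$ is $H := \breve \CI_n \cap g \breve \CI_n g^{-1}$. This gives a $\breve \CI_n$-equivariant bijection $\breve \CI_n / H \cong \breve \CI_n g \breve \CI_n / \breve \CI_n$ sending $iH \mapsto ig\breve \CI_n$. Since $g \in G$, the Frobenius $\s$ preserves both $\breve \CI_n$ and $g\breve \CI_n g^{-1}$, hence also $H$, and the bijection is $\s$-equivariant. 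This reduces the lemma to the identity $(\breve \CI_n/H)^\s = \CI_n / H^\s$, where $H^\s = \CI_n \cap g\CI_n g^{-1}$.

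This is the Lang-Steinberg step, which carries the only real content. For $n \ge 1$ the Moy-Prasad subgroup $\breve \CI_n$ is pro-$p$ and is the group of $\CO_{\breve F}$-points of a pro-unipotent $\CO_F$-group scheme; its closed subgroup $H$ inherits this structure, because intersections of Moy-Prasad subgroups, possibly after conjugation by an element of $G$, are again of this form. Lang's theorem applied level by level in the natural filtration yields $H^1(\s, H) = 0$, and the associated long exact sequence in nonabelian cohomology attached to $1 \to H \to \breve \CI_n \to \breve \CI_n/H \to 1$ then gives $(\breve \CI_n/H)^\s = \breve \CI_n^\s / H^\s = \CI_n/(\CI_n \cap g\CI_n g^{-1})$. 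Finally, the same orbit-stabilizer identification performed over $F$ yields $\CI_n/(\CI_n \cap g\CI_n g^{-1}) = \CI_n g \CI_n / \CI_n$, which completes the proof.

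The main technical point is verifying that $H$ is genuinely the group of integral points of a pro-unipotent $\CO_F$-group scheme, so that Lang's theorem is available; once this is granted, the remainder is formal orbit-stabilizer bookkeeping combined with standard Galois cohomology.
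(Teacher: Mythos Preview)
Your proof is correct and follows essentially the same route as the paper: both identify $\breve \CI_n g \breve \CI_n / \breve \CI_n$ with $\breve \CI_n / (\breve \CI_n \cap g \breve \CI_n g^{-1})$ via orbit--stabilizer and then lift $\sigma$-fixed cosets to $\CI_n$ by a Lang-type argument applied to the pro-$p$ group $H=\breve \CI_n \cap g \breve \CI_n g^{-1}$. The paper's version is terser, simply recording that $H$ is pro-$p$ and that this suffices for the lifting, while you make the underlying Lang--Steinberg step and its group-scheme justification explicit.
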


\begin{proof}
We identify $\breve \CI_n g \breve \CI_n/\breve \CI_n$ with $\breve \CI_n/(\breve \CI_n \cap g \breve \CI_n g \i)$ and $\CI_n g \CI_n/\CI_n$ with $\CI_n/(\CI_n \cap g \CI_n g \i)$. The natural map $\CI_n \to (\breve \CI_n g \breve \CI_n/\breve \CI_n)^\s$ has kernel $\CI_n \cap g \CI_n g \i$ and induces an injective map $$\CI_n/(\CI_n \cap g \CI_n g \i) \to (\breve \CI_n/(\breve \CI_n \cap g \breve \CI_n g \i))^\s.$$ 

Note that $\breve \CI_n \cap g \breve \CI_n g \i$ is a pro-$p$ subgroup of $\breve \CI_n$. Thus any element in $(\breve \CI_n/(\breve \CI_n \cap g \breve \CI_n g \i))^\s$ can be lifted to an element in $\breve \CI_n^\s=\CI_n$. So the map $\CI_n/(\CI_n \cap g \CI_n g \i) \to (\breve \CI_n/(\breve \CI_n \cap g \breve \CI_n g \i))^\s$ is also surjective.
\end{proof}

\begin{lemma}\label{qq}
Let $w \in \tW$ and $g \in \CI \dot w \CI$. Then $\sharp \CI_n g \CI_n/\CI_n=q^{\breve \ell(w)}$. 
\end{lemma}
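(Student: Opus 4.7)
The plan is to reduce to the case $g = \dot w$ with $\dot w$ a $\s$-fixed lift of $w$, then pass to $\breve G$ via Lemma \ref{fix}, and finally compute the cardinality using the affine root group factorization of $\breve\CI_n$.

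Any $g \in \CI \dot w \CI$ may be written as $g = h_1 \dot w h_2$ with $h_1, h_2 \in \CI$. Since $\CI_n$ is normal in $\CI$ (it is a Moy-Prasad subgroup associated to the barycenter of the alcove, which is fixed by every element of $\CI$), we have $\CI_n g \CI_n = h_1(\CI_n \dot w \CI_n) h_2$, and the resulting bijection $\CI_n g \CI_n / \CI_n \to \CI_n \dot w \CI_n/\CI_n$ reduces us to the case $g = \dot w$. Using $\tW = \breve\tW^\s$ we may further choose $\dot w$ to be $\s$-fixed. The map $h \mapsto h\dot w\breve\CI_n$ then yields a $\s$-equivariant bijection
\[
\breve\CI_n / (\breve\CI_n \cap \dot w \breve\CI_n \dot w\i) \;\isoarrow\; \breve\CI_n \dot w \breve\CI_n / \breve\CI_n,
\]
so by Lemma \ref{fix} it suffices to compute the number of $\s$-fixed points of the source.

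Next I would invoke the affine root subgroup factorization of $\breve\CI_n$ from Bruhat-Tits and Moy-Prasad theory: for a suitable ordering of the positive affine roots, the product map $\prod_\alpha \breve U_\alpha^{(n)} \cdot (\breve T \cap \breve\CI_n) \to \breve\CI_n$ is a bijection, and conjugation by $\dot w$ permutes the factors according to $\alpha \mapsto w\alpha$. Consequently $\breve\CI_n / (\breve\CI_n \cap \dot w \breve\CI_n \dot w\i)$ is in $\s$-equivariant bijection with $\prod_{\alpha \in \Phi_w} \breve U_\alpha^{(n)}$, where $\Phi_w$ is the set of positive affine roots $\alpha$ with $w\i \alpha$ negative; its cardinality equals $\breve\ell(w)$ by definition of the length function on $\breve\tW$.

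Finally, since $\dot w$ is $\s$-fixed, the set $\Phi_w$ is $\s$-stable, and each $\s$-orbit of size $d$ contributes a factor which, as a $\s$-variety, is isomorphic to $\Res_{\BF_{q^d}/\BF_q} \mathbb{G}_a$, whose $\s$-fixed points form an $\BF_q$-vector space of dimension $d$, hence have cardinality $q^d$. Multiplying over all orbits gives $q^{\sharp \Phi_w} = q^{\breve\ell(w)}$, as claimed. The main obstacle is making the affine root group factorization of $\breve\CI_n$ and its compatibility with conjugation by $\dot w$ precise in the generality of an arbitrary (not necessarily quasi-split) connected reductive $p$-adic group; once this is in place, the Lang-Steinberg style orbit count is routine.
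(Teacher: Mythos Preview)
Your outline is sound and arrives at the same result, but the route differs from the paper's in one structural respect. The paper first fixes a reduced expression $w = s_1 \cdots s_k$ in $\tW$ and uses the Bott--Samelson type bijection
\[
\breve\CI_n \dot s_1 \breve\CI_n \times_{\breve\CI_n} \cdots \times_{\breve\CI_n} \breve\CI_n \dot s_k \breve\CI_n / \breve\CI_n \;\isoarrow\; \breve\CI_n g' \breve\CI_n / \breve\CI_n
\]
(valid by the Tits system, together with Richarz's result that $\ell$-additivity in $\tW$ implies $\breve\ell$-additivity in $\breve\tW$) to reduce multiplicatively to a single simple reflection $s \in \tilde\BS$. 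The affine root factorization is then invoked only for $s$, where it is short: one identifies $\breve\CI_n \dot s \breve\CI_n / \breve\CI_n$ with $\breve\CI \dot s \breve\CI / \breve\CI$, an affine space over $\bar\kk$ of dimension $\breve\ell(s)$, and takes $\s$-fixed points via Lemma~\ref{fix}. Your route bypasses the reduced expression and applies the root factorization of $\breve\CI_n$ to the full $w$ at once, then counts $\s$-fixed points by a Lang--Steinberg orbit argument. This is legitimate but front-loads the Bruhat--Tits work you flag at the end: you must check that $\breve\CI_n / (\breve\CI_n \cap \dot w \breve\CI_n \dot w^{-1})$ is $\s$-equivariantly a product of affine cells indexed by $\Phi_w$ (your $\breve U_\alpha^{(n)}$ are infinite as written; you mean successive filtration quotients), that each cell is one-dimensional over $\bar\kk$ (delicate when $\breve\Phi$ is non-reduced---the paper works with non-divisible roots $\breve\Phi'$ and avoids this by citing Richarz for $\dim \breve\CI \dot s \breve\CI/\breve\CI = \breve\ell(s)$), and that $\s$ really acts on each orbit as the Weil-restriction Frobenius. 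The paper's reduction localizes all of this to the simple reflection case, where the verification is transparent; your approach trades that reduction for a more global root-combinatorics argument.
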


\begin{proof}
Note that for any $\tau \in \Omega$ and $w \in \tW$, $\CI_n \dot \tau \dot w \CI_n=\dot \tau \CI_n \dot w \CI_n$ and $\sharp \CI_n \dot \tau \dot w \CI_n/\CI_n=\sharp \CI_n \dot w \CI_n/\CI_n$. Thus it suffices to consider the case where $w \in W_a$. 

We fix a reduced expression $w=s_1 s_2 \cdots s_k$ with $s_i \in \tilde \BS$ for all $i$. Set $g'=\dot s_1 \dot s_2 \cdots \dot s_k$. Suppose that $g=h g' h'$ for $h, h' \in \CI$. Then $\CI_n g \CI_n=h \CI_n g' \CI_n h'$. It suffices to prove the statement for $g'$. 

Let $\breve \sR$ be the set of affine roots of $\breve G$. We define $\breve \sR(w)=\{\a \in \breve \sR; \a>0, w (\a)<0\}$. By \cite[Sublemma 1.12]{Ri}, $\breve \sR(w)=\breve \sR(s) \sqcup s \breve \sR(w s)$ for any $s \in \tilde \BS$ with $w s<w$. Define the action of $\breve \CI^k$ on $\breve \CI \dot s_1 \breve \CI \times \cdots \times \breve \CI \dot s_k \breve \CI$ by $(i_1, \cdots, i_k) \cdot (g_1, \cdots g_k)=(g_1 i_1^{-1}, i_1 g_1 i_2^{-1}, \cdots, i_{k-1} g_k i_k \i)$. Let $\breve \CI \dot s_1 \breve \CI \times_{\breve \CI} \cdots \times_{\breve \CI} \breve \CI \dot s_k \breve \CI/\breve \CI$ be the quotient space. By standard facts on Tits systems (see \cite[Theorem 5.1.3 (i)]{Ku}), the multiplication map $$\breve \CI \dot s_1 \breve \CI \times_{\breve \CI} \cdots \times_{\breve \CI} \breve \CI \dot s_k \breve \CI/\breve \CI \to \breve \CI g' \breve \CI/\breve \CI$$ is bijective. 

Similarly, the map $$\breve \CI_n \dot s_1 \breve \CI_n \times_{\breve \CI_n} \cdots \times_{\breve \CI_n} \breve \CI_n \dot s_k \breve \CI_n/\breve \CI_n \to \breve \CI_n g' \breve \CI_n/\breve \CI_n$$ is bijective. 

Note that the multiplication map is $\s$-equivariant. Thus by Lemma \ref{fix}, \begin{align*} \CI_n g' \CI_n/\CI_n & \cong (\breve \CI_n \dot s_1 \breve \CI_n \times_{\breve \CI_n} \cdots \times_{\breve \CI_n} \breve \CI_n \dot s_k \breve \CI_n/\breve \CI_n)^\s \\ & \cong \CI_n \dot s_1 \CI_n \times_{\CI_n} \cdots \times_{\CI_n} \CI_n \dot s_k \CI_n/\CI_n.
\end{align*}
Therefore $\sharp \CI_n g' \CI_n/\CI_n=\sharp(\CI_n \dot s_1 \CI_n/\CI_n) \sharp(\CI_n \dot s_2 \CI_n/\CI_n) \cdots \sharp (\CI_n \dot s_k \CI_n/\CI_n)$. 

It remains to show that for any $s \in \tilde \BS$, $\sharp \CI_n \dot s \CI_n/\CI_n=q^{\breve \ell(s)}$. 

Let $\breve \Phi$ be the set of roots of $\breve G$ relative to $S$.  and $\breve \Phi'$ be the set of non-divisible roots in $\breve \Phi$. Let $T$ be the centralizer of $S$. As in \cite[\S3.1]{Tits}, for any $a \in \breve \Phi'$, there exist $\a_a, \b_a \in \breve \sR$ whose vector parts are $a$ and such that the product mappings
\begin{gather*} 
\Pi_{a \in \breve \Phi'} X_{\a_a} \times T_0 \to \breve \CI, \\
\Pi_{a \in \breve \Phi'} X_{\b_a} \times T_0 \to \breve \CI \cap \dot s\breve \CI \dot s \i 
\end{gather*} 
are bijective (for any ordering of the factors of the product), where $X_{\a}$ is the affine root subgroup corresponding to the affine root $\a$ as in \cite[\S1.2]{Tits} and $T_0$ is the unique parahoric subgroup of $T(\breve F)$. 

By the definition of $\breve \CI_n$, the product mappings 
\begin{gather*} 
\Pi_{a \in \breve \Phi'} X_{\a_a+n} \times T_n \to \breve \CI_n, \\
\Pi_{a \in \breve \Phi'} X_{\b_a+n} \times T_n \to \breve \CI_n \cap \dot s\breve \CI_n \dot s \i 
\end{gather*} 
are also bijective.

Hence $$\breve \CI_n \dot s \breve \CI_n/\breve \CI_n \cong \Pi_{a \in \breve \Phi'} X_{\a_a+n}/\Pi_{a \in \breve \Phi'} X_{\b_a+n} \cong \Pi_{a \in \breve \Phi'} X_{\a_a}/\Pi_{a \in \breve \Phi'} X_{\b_a} \cong \breve \CI \dot s \breve \CI/\breve \CI.$$ 

By \cite[Proof of Proposition 1.11]{Ri}, $\breve \CI \dot s \breve \CI/\breve \CI$ is an affine space over $\bar \kk$ of dimension $\breve \ell(s)$. By Lemma \ref{fix}, $\CI \dot s \CI/\CI=(\breve \CI_n \dot s \breve \CI_n/\breve \CI_n)^\s$ is the set of $\kk$-valued points of an affine space of dimension $\breve \ell(s)$. Therefore, $\sharp \CI_n \dot s \CI_n/\CI_n=q^{\breve \ell(s)}$. 
\end{proof}

\subsection{Proof of Proposition \ref{mult}}
Define the action of $\CI_n$ on $\CI_n g \CI_n \times \CI_n g' \CI_n$ by $h \cdot (z, z')=(z h \i, h z')$. Let $\CI_n g \CI_n \times_{\CI_n} \CI_n g' \CI_n$ be the quotient. We consider the map induced from the multiplication $$\CI_n g \CI_n \times_{\CI_n} \CI_n g' \CI_n/\CI_n \to G/\CI_n.$$ It is obvious that $\CI_n g g' \CI_n/\CI_n$ is contained in the image. By Lemma \ref{qq}, \begin{gather*} \sharp \CI_n g g' \CI_n/\CI_n=q^{\breve \ell(w w')}, \\ \sharp\CI_n g \CI_n \times_{\CI_n} \CI_n g' \CI_n/\CI_n=\sharp\CI_n g \CI_n/\CI_n \cdot \sharp \CI_n g' \CI_n/\CI_n=q^{\breve \ell(w)} q^{\breve \ell(w')}.
\end{gather*} Since $\ell(w w')=\ell(w)+\ell(w')$, $\breve \ell(w w')=\breve \ell(w)+\breve \ell(w')$. Therefore $$\sharp\CI_n g \CI_n \times_{\CI_n} \CI_n g' \CI_n/\CI_n=\sharp \CI_n g g' \CI_n/\CI_n.$$ Thus the image of the map $\CI_n g \CI_n \times_{\CI_n} \CI_n g' \CI_n/\CI_n \to G/\CI_n$ is $\CI_n g g' \CI_n/\CI_n$ and the map is bijective. Now the statement follows from \S\ref{hecke}(a). \qed

\smallskip

Similar to \S\ref{DL-red}, we have the following inductive result.

\begin{lemma}\label{bar-h-w}
Let $n \in \BN$ with $\omega(\CI_{n-1})=1$. Let $w \in \tW$ and $s \in \BS$. 

(1) If $\ell(s w \th(s))=\ell(w)$, then $\bar H_R(G, \CI_n)_w=\bar H_R(G, \CI_n)_{s w \th(s)}$. 

(2) If $s w \th(s)<w$, then $\bar H_R(G, \CI_n)_w \subset \bar H_R(G, \CI_n)_{s w \th(s)}+\bar H_R(G, \CI_n)_{s w}$.  
\end{lemma}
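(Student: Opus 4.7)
The engine of the proof is the commutator identity
\[
\mathbbm{1}_A\,\mathbbm{1}_B \equiv \omega(A)^{-1}\,\mathbbm{1}_B\,\mathbbm{1}_{\th(A)} \pmod{[H_R,H_R]_{\th,\omega}},
\]
valid whenever $\omega|_A$ is constant. Since $\omega(\CI_{n-1})=1$ forces $\omega(\CI_n)=1$, it applies to $A=\CI_n\dot s\CI_n$ with $\omega(A)=\omega(\dot s)$. An essential combinatorial ingredient, a direct consequence of Lemma \ref{qq}: every Iwahori cell $\CI\dot w\CI$ contains exactly $|\CI/\CI_n|$ double cosets under $\CI_n\times\CI_n$, a count independent of $w$ (since by Lemma \ref{qq} each such double coset has $q^{\breve\ell(w)}$ left $\CI_n$-cosets, while the total number of left $\CI_n$-cosets in $\CI\dot w\CI$ is $|\CI/\CI_n|\cdot q^{\breve\ell(w)}$).

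For part (1), a short comparison of the two associative reductions of $T_sT_wT_{\th(s)}$ in the Iwahori--Hecke algebra shows that the sub-cases where $sw,w\th(s)$ are both $>w$ or both $<w$ force $T_{sw}=T_{w\th(s)}$, hence $sw\th(s)=w$ and the claim is trivial. In the remaining non-trivial case $sw>w$, $w\th(s)<w$ (and its symmetric partner $sw<w$, $w\th(s)>w$), setting $v:=w\th(s)$ yields the length-additive reduced expressions $sw\th(s)=s\cdot v$ and $w=v\cdot\th(s)$. For $h\in\CI\dot v\CI$, Proposition \ref{mult} gives
\[
\mathbbm{1}_{\CI_n\dot s\CI_n}\,\mathbbm{1}_{\CI_n h\CI_n}=\mu_G(\CI_n)\,\mathbbm{1}_{\CI_n\dot s h\CI_n}, \qquad \mathbbm{1}_{\CI_n h\CI_n}\,\mathbbm{1}_{\CI_n\th(\dot s)\CI_n}=\mu_G(\CI_n)\,\mathbbm{1}_{\CI_n h\,\th(\dot s)\CI_n},
\]
supported respectively in $\CI\dot{sw\th(s)}\CI$ and $\CI\dot w\CI$. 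Cancelling the unit $\mu_G(\CI_n)\in R^\times$ and applying the commutator identity yields the cocenter relation $\overline{\mathbbm{1}_{\CI_n\dot s h\CI_n}}=\omega(\dot s)^{-1}\,\overline{\mathbbm{1}_{\CI_n h\,\th(\dot s)\CI_n}}$. Combined with the uniform $\CI_n$-double coset count ($|\CI/\CI_n|$ on each side), this identifies the $R$-submodules $\bar H_R(G,\CI_n)_{sw\th(s)}=\omega(\dot s)^{-1}\bar H_R(G,\CI_n)_w$, i.e.\ equality as subsets of $\bar H_R$, establishing (1).

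For part (2), the hypothesis $sw\th(s)<w$ forces $\ell(sw\th(s))=\ell(w)-2$ and the unique sub-case $sw<w$, $w\th(s)<w$, $sw\th(s)<sw$ (the value $\ell(w)-1$ is excluded by parity, as single $s$-multiplications change length by exactly $\pm 1$). The decomposition $w=s\cdot sw$ remains length-additive, so for $h\in\CI\dot{sw}\CI$, Proposition \ref{mult} gives $\mathbbm{1}_{\CI_n\dot s\CI_n}\mathbbm{1}_{\CI_n h\CI_n}=\mu_G(\CI_n)\,\mathbbm{1}_{\CI_n\dot s h\CI_n}$ supported in $\CI\dot w\CI$. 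In contrast, $sw\cdot\th(s)=sw\th(s)$ is length-decreasing, so only the non-reduced Bruhat product $\CI\dot{sw}\CI\cdot\CI\th(\dot s)\CI=\CI\dot{sw\th(s)}\CI\sqcup\CI\dot{sw}\CI$ is available (cf.\ \S\ref{DL-red}), and $\mathbbm{1}_{\CI_n h\CI_n}\,\mathbbm{1}_{\CI_n\th(\dot s)\CI_n}$ is correspondingly supported in $\CI\dot{sw\th(s)}\CI\sqcup\CI\dot{sw}\CI$. The commutator identity then places $\overline{\mathbbm{1}_{\CI_n\dot s h\CI_n}}$ in $\bar H_R(G,\CI_n)_{sw\th(s)}+\bar H_R(G,\CI_n)_{sw}$, and the dimension-matching argument from (1) again promotes this pointwise statement to the full containment of $\bar H_R(G,\CI_n)_w$. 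The most delicate step I anticipate is verifying that the induced cocenter map $\bar H_R(G,\CI_n)_v\to\bar H_R(G,\CI_n)_w$ is indeed surjective onto the target submodule; this hinges on tying the uniform $\CI_n$-double coset count to the invertibility (after inverting $p$, which is a unit in $R$) of right-convolution by $\mathbbm{1}_{\CI_n\th(\dot s)\CI_n}$ via the quadratic-type relation for $\mathbbm{1}_{\CI_n\th(\dot s)\CI_n}^2$.
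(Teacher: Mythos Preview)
Your core mechanism---Proposition~\ref{mult} plus the twisted commutator identity---is exactly the paper's, but your parametrization creates a genuine gap. You run over $h$ in the ``middle'' cell $\CI\dot v\CI$ with a \emph{fixed} lift $\dot s$, obtaining elements $\mathbbm{1}_{\CI_n\dot s h\CI_n}$ and $\mathbbm{1}_{\CI_n h\,\th(\dot s)\CI_n}$; to conclude, you must show these exhaust the $\CI_n$-double cosets in $\CI\dot w\CI$ (and in $\CI\dot{sw\th(s)}\CI$). The uniform count $|\CI/\CI_n|$ does not give this: equality of cardinalities of source and target for the map $\CI_n h\CI_n\mapsto\CI_n h\,\th(\dot s)\CI_n$ would need injectivity, and $\th(\dot s)\CI_n\th(\dot s)^{-1}\not\subset\CI_n$ in general, so cancellation fails. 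Your suggested repair via a quadratic relation for $\mathbbm{1}_{\CI_n\th(\dot s)\CI_n}^2$ is not available: at level $\CI_n$ there is no clean Iwahori--Matsumoto quadratic identity, so right convolution by $\mathbbm{1}_{\CI_n\th(\dot s)\CI_n}$ is not visibly invertible.

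The paper sidesteps the issue by reversing the direction of attack. Assume without loss of generality $sw<w$; then for an \emph{arbitrary} $g\in\CI\dot w\CI$ one factorizes $g=g_1g_2$ with $g_1\in\CI\dot s\CI$ and $g_2\in\CI\dot{sw}\CI$ (Bruhat). Proposition~\ref{mult} gives $\mathbbm{1}_{\CI_n g\CI_n}=\mu_G(\CI_n)^{-1}\mathbbm{1}_{\CI_n g_1\CI_n}\mathbbm{1}_{\CI_n g_2\CI_n}$, and the commutator identity carries this to $\omega(g_1)^{-1}\mu_G(\CI_n)^{-1}\mathbbm{1}_{\CI_n g_2\CI_n}\mathbbm{1}_{\CI_n\th(g_1)\CI_n}$. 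Since $g$ was arbitrary and these functions span $H_R(G,\CI_n)_w$, the inclusion is immediate---no surjectivity step is needed. Allowing $g_1$ to vary over all of $\CI\dot s\CI$, rather than fixing it at $\dot s$, is precisely what makes the factorization land on every target double coset.

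A minor point: your claim that $\omega|_{\CI_n\dot s\CI_n}$ is constant is correct but not for the reason given. Triviality of $\omega$ on $\CI_n$ is not enough, since $\CI_n\dot s\CI_n$ is a union of $q^{\breve\ell(s)}$ left $\CI_n$-cosets. The paper uses instead that $\CI_n g_1\CI_n\subset g_1\CI_{n-1}$ for $g_1\in\CI\dot s\CI$, whence constancy follows from $\omega(\CI_{n-1})=1$.
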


\begin{proof}
Without loss of generality, we may assume that $s w<w$. By definition, $H_R(G, \CI_n)_w$ is spanned by $\mathbbm{1}_{\CI_n g \CI_n}$ with $g \in \CI \dot w \CI$. Since $s w<w$, for any $g \in \CI \dot w \CI$, there exists $g_1 \in \CI \dot s \CI$ and $g_2 \in \CI \dot s \dot w \CI$ with $g=g_1 g_2$. Since $\CI_n g_1 \CI_n \subset g_1 \CI_{n-1}$, we have $\omega \mid_{\CI_n g_1 \CI_n}$ is constant. By Proposition \ref{mult}, 
\begin{align*}
\mathbbm{1}_{\CI_n g \CI_n} &=\frac{1}{\mu_G(\CI_n)} \mathbbm{1}_{\CI_n g_1 \CI_n} \mathbbm{1}_{\CI_n g_2 \CI_n} \\ & \equiv \frac{\omega(g_1) \i}{\mu_G(\CI_n)} \mathbbm{1}_{\CI_n g_2 \CI_n} \mathbbm{1}_{\CI_n \th(g_1) \CI_n}  \mod [H_R(G, \CI_n), H_R(G, \CI_n)]_{\th, \omega}.
\end{align*}

If $\ell(s w \th(s))=\ell(w)$, then $s w \th(s)>s w$ and $\mathbbm{1}_{\CI_n g_2 \CI_n} \mathbbm{1}_{\CI_n g_1 \CI_n} \in H_R(G, \CI_n)_{s w \th(s)}$. Thus $\bar H_R(G, \CI_n)_w \subset \bar H_R(G, \CI_n)_{s w \th(s)}$. Similarly, $\bar H_R(G, \CI_n)_{s w \th(s)} \subset \bar H_R(G, \CI_n)_w$. Part (1) is proved. 

If $s w \th(s)<w$, then $\mathbbm{1}_{\CI_n g_2 \CI_n} \mathbbm{1}_{\CI_n g_1 \CI_n} \subset H_R(G, \CI_n)_{s w \th(s)}+H_R(G, \CI_n)_{s w}$. Part (2) is proved. 
\end{proof}

\subsection{Proof of Theorem \ref{newton-hn}}
By Theorem \ref{newton-h}, $\sum_{\nu} \bar H_R(G, \CI_n; \nu)$ is a direct sum. By definition, for any $w \in \tW_{\min}$, $\bar H_R(G, \CI_n)_w \subset \bar H_R(G, \CI_n; \pi(w))$. Thus it suffices to show that $$\bar H_R(G, \CI_n)=\sum_{w \in \tW_{\min}} \bar H_R(G, \CI_n)_w.$$

Since $H_R(G, \CI_n)=\oplus_{x \in \tW} H_R(G, \CI_n)_{x}$, we argue by induction on $\ell(x)$ that $$\bar H_R(G, \CI_n)_{x} \subset \sum_{w \in \tW_{\min}} \bar H_R(G, \CI_n)_w.$$

If $x \in \tW_{\min}$, then the statement is obvious. 

If $x \notin \tW_{\min}$, then by Theorem \ref{min}, there exists $x' \in \tW$ and $s \in \tilde \BS$ such that $x \approx_\th x'$ and $s x' \th(s)<x'$. Then by Lemma \ref{bar-h-w}, $\bar H_R(G, \CI_n)_{x}=\bar H_R(G, \CI_n)_{x'} \subset \bar H_R(G, \CI_n)_{s x'}+\bar H_R(G, \CI_n)_{s x' \th(s)}$. Since $\ell(s x'), \ell(s x' \th(s))<\ell(x)$, the statement follows from inductive hypothesis on $s x'$ and $s x' \th(s)$. \qed

\subsection{} By Theorem \ref{str} and Lemma \ref{bar-h-w} (1), the Iwahori-Matsumoto type generators of $\bar H_R(G, \CI_n; \nu)$ can be formulated as follows:

(a) Any element in $\bar H_R(G, \CI_n; \nu)$ can be represented by an element in $H_R(G, \CI_n)$ with support in $\cup_{(x, K, u) \text{ is a standard triple}; u x \in \tW_{\min}, \pi(x)=\nu} \CI \dot u \dot x \CI$.

\section{Howe's conjecture}\label{5} 

\subsection{} Let $X$ be a compact subset of $G$. Recall that $$G \cdot_\th X=\{g x \th(g) \i; g \in G, x \in X\}.$$ We denote by $J(G \cdot_\th X)$ the set of $(\th, \omega)$-invariant distributions of $G$ supported in $G \cdot_\th X$. Let $\CK$ be an open compact subgroup of $G$. Howe's conjecture asserts that 

\begin{theorem}\label{howe-1}
The restriction $J(G \cdot_\th X) \mid_{H_R(G, \CK)}$ is finite dimensional.
\end{theorem}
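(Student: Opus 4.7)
\textbf{Proof proposal for Theorem \ref{howe-1}.} The plan follows the strategy sketched in the introduction: combine the Newton decomposition of the cocenter with the finiteness of minimal-length elements in each Newton class. First I would reduce to the case $\CK = \CI_n$. Since the subgroups $\CI_n$ form a fundamental system of neighborhoods of the identity, one may choose $n$ large enough that $\CI_n \subset \CK$ and also $\omega(\CI_{n-1}) = 1$. Then $H_R(G,\CK) \subset H_R(G, \CI_n)$, so it suffices to prove $\dim J(G \cdot_\th X) \mid_{H_R(G, \CI_n)} < \infty$.

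Next, I would localize to the finite set of Newton strata that meet $G \cdot_\th X$. Since $X$ is compact, Proposition \ref{fini} provides a finite collection $\nu_1, \ldots, \nu_k \in \aleph$ with $X \subset G(\nu_1) \cup \cdots \cup G(\nu_k)$; each stratum is stable under $\cdot_\th$ by construction, so $G \cdot_\th X$ is also contained in this union. Any $j \in J(G \cdot_\th X)$ factors through $\bar H_R$, and vanishes on every function supported off $G \cdot_\th X$; combining this with the disjointness of the Newton strata from Theorem \ref{newton-gf} shows that $j$ kills $H_R(\nu)$ for $\nu \notin \{\nu_1, \ldots, \nu_k\}$. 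Via Theorem \ref{newton-hn}(1) and its associated direct-sum decomposition $\bar H_R(G, \CI_n) = \bigoplus_\nu \bar H_R(G, \CI_n; \nu)$, the restriction $j|_{H_R(G, \CI_n)}$ is therefore entirely determined by the induced functional on the finite direct sum $\bigoplus_{i=1}^k \bar H_R(G, \CI_n; \nu_i)$.

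Finally, I would bound the dimension of this finite direct sum. The Iwahori--Matsumoto type generators of Theorem \ref{newton-hn}(2) present each summand as $\bar H_R(G, \CI_n; \nu_i) = \sum_{w \in \tW_{\min},\, \pi(w) = \nu_i} \bar H_R(G, \CI_n)_w$. By Corollary \ref{finite-w} the indexing set of $w$'s is finite, and for each $w$ the space $H_R(G, \CI_n)_w$ consists of $\CI_n$-biinvariant functions supported on the single Iwahori double coset $\CI \dot w \CI$, which breaks up into finitely many $\CI_n$-double cosets as $[\CI : \CI_n] < \infty$; hence $H_R(G, \CI_n)_w$ and its image $\bar H_R(G, \CI_n)_w$ in the cocenter are finitely generated over $R$. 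Assembling these finiteness statements yields the desired bound and proves Howe's conjecture in the generality of the $(\th,\omega)$-twisted setting.

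I do not anticipate a genuine obstacle in this argument: essentially all the hard work has been carried out in Theorems \ref{newton-gf}, \ref{newton-h}, \ref{newton-hn}, and Corollary \ref{finite-w}. The only point requiring a little care is ensuring that the $(\th,\omega)$-invariance of $j$ is used cleanly to reduce the restriction $j|_{H_R(G,\CI_n)}$ to a functional on the \emph{cocenter} $\bar H_R(G, \CI_n)$ before applying Theorem \ref{newton-hn}; once this is done, the Iwahori--Matsumoto type presentation immediately yields finite dimension.
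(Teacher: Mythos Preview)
Your proposal is correct and follows essentially the same route as the paper: reduce to $\CK=\CI_n$, use Proposition~\ref{fini} to confine $G\cdot_\th X$ to finitely many Newton strata, invoke Theorem~\ref{newton-hn} to pass to the finite direct sum $\bigoplus_i \bar H_R(G,\CI_n;\nu_i)$, and then bound each summand via Corollary~\ref{finite-w} and the finiteness of $\CI_n$-double cosets in $\CI\dot w\CI$. The only difference is that the paper isolates the per-stratum estimate as a separate statement (Theorem~\ref{howe}) and uses Lemma~\ref{qq} to obtain the explicit bound $N_\nu[\CI:\CI_n]$ rather than just qualitative finiteness, but this is a matter of presentation, not substance.
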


\begin{remark} For ordinary invariant distributions, this is proved by Clozel \cite{Cl}, Barbasch and Moy \cite{BM}. For twisted invariant distributions, this is a new result. Our approach here is different from both \cite{Cl} and \cite{BM}. 
\end{remark}

\subsection{} Let $n \in \BN$ with $\omega(\CI_{n-1})=1$ and $\nu \in \aleph$. Then $\CI_n$ is an open compact subgroup of $G$ and $X_\nu$ is a compact subset of $G$. By definition, the Newton stratum $G(\nu)$ is $G \cdot_\th X_\nu$. 

Recall that $R$ is a commutative $\BZ[\frac{1}{p}]$-algebra. For any $R$-module $M$, we set $M^*=\Hom_R(M, R)$. By the Newton decomposition $G=\sqcup_{\nu \in \aleph} G(\nu)$, we have that $$J(G)=\oplus_{\nu \in \aleph} J(G(\nu)) \text{ and } J(G(\nu))=\bar H_R(\nu)^*.$$We first consider $J(G(\nu)) \mid_{H_R(G, \CI_n)}$ and give an upper bound of its rank. We will show in \S\ref{proof-howe} how the general case in the Howe's conjecture can be reduced to this case. 

\begin{theorem}\label{howe}
Let $\nu \in \aleph$. Then there exists a constant $N_{\nu} \in \BN$ such that for any $n \in \BN$ with $\omega(\CI_{n-1})=1$, $J(G(\nu)) \mid_{H_R(G, \CI_n)}$ is generated by $N_{\nu} [\CI: \CI_n]$ elements. 
\end{theorem}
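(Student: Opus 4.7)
The plan is to identify $J(G(\nu))\mid_{H_R(G,\CI_n)}$ with a submodule of the $R$-dual of $\bar H_R(G,\CI_n;\nu)$ and then bound the number of $R$-generators of the latter using the Iwahori--Matsumoto type generators of Theorem \ref{newton-hn}.

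First I would set up the duality. From $J(G)=\bar H_R^*$ and the Newton decomposition of $\bar H_R$ (Theorem \ref{newton-h}(2)) one obtains $J(G(\nu))=\bar H_R(\nu)^*$. By Theorem \ref{newton-hn}(1), the image of $H_R(G,\CI_n)$ under the composition $H_R\to\bar H_R\twoheadrightarrow\bar H_R(\nu)$ is exactly $\bar H_R(G,\CI_n;\nu)$, since $\bar H_R(G,\CI_n)=\oplus_\mu \bar H_R(G,\CI_n;\mu)$ is compatible with the ambient Newton decomposition of $\bar H_R$. Therefore the restriction map factors as
$$\bar H_R(\nu)^*\longrightarrow\bar H_R(G,\CI_n;\nu)^*\hookrightarrow H_R(G,\CI_n)^*,$$
so $J(G(\nu))\mid_{H_R(G,\CI_n)}$ is a submodule of $\bar H_R(G,\CI_n;\nu)^*$, and it suffices to exhibit a generating set of $\bar H_R(G,\CI_n;\nu)$ of size at most $N_\nu[\CI:\CI_n]$ for some $N_\nu$ depending only on $\nu$.

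For this, by Theorem \ref{newton-hn}(2) we have $\bar H_R(G,\CI_n;\nu)=\sum_w \bar H_R(G,\CI_n)_w$, with $w$ running over $\tW_{\min}$ subject to $\pi(w)=\nu$; by Corollary \ref{finite-w} this index set is finite, and we let $N_\nu$ be its cardinality. Each $\bar H_R(G,\CI_n)_w$ is a quotient of $H_R(G,\CI_n)_w$, which has an $R$-basis of characteristic functions of $\CI_n$-double cosets in $\CI\dot w\CI$. A direct count finishes the job: by Lemma \ref{qq} each $\CI_n$-double coset in $\CI\dot w\CI$ contains $q^{\breve\ell(w)}$ left $\CI_n$-cosets, whereas
$$|\CI\dot w\CI/\CI_n|\;=\;|\CI\dot w\CI/\CI|\cdot[\CI:\CI_n]\;=\;q^{\breve\ell(w)}\cdot[\CI:\CI_n],$$
where the factor $|\CI\dot w\CI/\CI|=q^{\breve\ell(w)}$ comes from the same Bruhat--Tits argument as in Lemma \ref{qq} applied at the Iwahori level (or directly from the BN-pair structure). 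Dividing gives exactly $[\CI:\CI_n]$ such double cosets, so $H_R(G,\CI_n)_w$ is generated by $[\CI:\CI_n]$ elements. Summing over the $N_\nu$ minimal-length classes yields the desired $N_\nu[\CI:\CI_n]$-element generating set of $\bar H_R(G,\CI_n;\nu)$, and hence the same bound for $J(G(\nu))\mid_{H_R(G,\CI_n)}$.

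There is no real obstacle in this argument: the theorem is essentially a bookkeeping/counting consequence of the Newton decomposition for $\bar H_R(G,\CI_n)$ (Theorem \ref{newton-hn}) and the finiteness in Corollary \ref{finite-w}. The mildly delicate point is the uniformity in $n$ of the constant $N_\nu$, but this is automatic since $N_\nu$ depends only on $\nu$ and the combinatorics of $\tW_{\min}$ under $\pi$.
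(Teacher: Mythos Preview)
Your proposal is correct and follows essentially the same route as the paper: identify $J(G(\nu))\mid_{H_R(G,\CI_n)}$ with (a submodule of) $\bar H_R(G,\CI_n;\nu)^*$ via the Newton decomposition of $\bar H_R(G,\CI_n)$, then use the Iwahori--Matsumoto generators (Theorem~\ref{newton-hn}(2)), the finiteness of $\{w\in\tW_{\min}:\pi(w)=\nu\}$ (Corollary~\ref{finite-w}), and the double-coset count from Lemma~\ref{qq}. The paper in fact proves the equality $J(G(\nu))\mid_{H_R(G,\CI_n)}=\bar H_R(G,\CI_n;\nu)^*$ rather than just the inclusion you state, but the counting argument and the conclusion are identical.
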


\begin{proof}
We have that \begin{align*} J(G(\nu)) \mid_{H_R(G, \CI_n)} &=J(G(\nu)) \mid_{\bar H_R(G, \CI_n)}=J(G(\nu)) \mid_{\oplus_{v' \in \aleph} \bar H_R(G, \CI_n; v')} \\ &=J(G(\nu)) \mid_{\bar H_R(G, \CI_n; \nu)}=\bar H_R(G, \CI_n; \nu)^*.\end{align*} Here the first and last equality follow from the definition of invariant distributions, the second equality follows from the Newton decomposition $\bar H_R(G, \CI_n)=\oplus_{\nu \in \aleph} \bar H_R(G, \CI_n; \nu)$ (Theorem \ref{newton-hn} (1)) and the third  equality follows from the fact that $G(\nu) \cap G(\nu')=\emptyset$ for $\nu \neq \nu'$ (Theorem \ref{newton-gf}).

By the Iwahori-Matsumoto type generators of $\bar H_R(G, \CI_n; \nu)$ (Theorem \ref{newton-hn} (2)), we have a surjection $$\oplus_{w \in \tW_{\min}; \pi(w)=\nu} H_R(G, \CI_n)_w \twoheadrightarrow \bar H_R(G, \CI_n; \nu).$$

By Corollary \ref{finite-w}, there are only finitely many $w \in \tW_{\min}$ with $\pi(w)=\nu$. We denote this number by $N_{\nu}$. For each such $w$ and for any $g \in \CI\dot w \CI$, by Lemma \ref{qq}, we have $$\sharp(\CI \dot w \CI/\CI_n)=\sharp(\CI \dot w \CI/\CI) [\CI: \CI_n]=q^{\breve \ell(w)} [\CI: \CI_n], \quad \text{ and }\sharp(\CI_n g \CI_n/\CI_n)=q^{\breve \ell(w)}.$$ Thus $\CI \dot w \CI$ is a union of $[\CI: \CI_n]$ double cosets of $\CI_n$. In particular, $\rank H_R(G, \CI_n)_w=[\CI: \CI_n]$. Therefore $\bar H_R(G, \CI_n; \nu) $ is generated by $\sum_{w \in \tW_{\min}; \pi(w)=\nu} \rank H_R(G, \CI_n)_w=N_{\nu} [\CI: \CI_n]$ elements.
\end{proof}

\subsection{Proof of Theorem \ref{howe-1}}\label{proof-howe} By Proposition \ref{fini}, $X$ is contained in a finite union of Newton strata $G(\nu)$. Therefore $J(G \cdot_\th X)$ is a subset of a finite union of $J(G(\nu))$. For any open compact subgroup $\CK$ of $G$, there exists $n \in \BN$ such that $\omega(\CI_{n-1})=1$ and $\CI_n \subset \CK$. Hence $H_R(G, \CK) \subset H_R(G, \CI_n)$ and $J(G(\nu)) \mid_{H_R(G, \CK)} \subset J(G(\nu)) \mid_{H_R(G, \CI_n)}$. Now the statement follows from Theorem \ref{howe}. \qed

\section{Rigid cocenter}\label{6}

\subsection{} For any $\nu=(\t, v) \in \aleph$, we denote by $M_\nu$ the centralizer of $v$ in $G$, i.e., the subgroup of $G$ generated by $Z(F)$ and the root subgroups $U_a(F)$ for all roots $a$ with $\<v, a\>=0$ (cf. \cite[\S 6.1]{Ko1}). This is a Levi subgroup of $G$. 

We define the rigid and non-rigid part of $G$ by $$G_\rig=\sqcup_{\nu \in \aleph; M_\nu=G} G(\nu), \qquad G_{\text{nrig}}=\sqcup_{\nu \in \aleph; M_\nu \neq G} G(\nu).$$

Let $H_R^\rig$ and $H_R^{\text{nrig}}$ be the subset of $H_R$ consisting of functions supported in $G_\rig$ and $G_{\text{nrig}}$, respectively and let $\bar H_R^\rig$ and $\bar H_R^{\text{nrig}}$ be their images in $\bar H_R$, respectively. We call $\bar H_R^\rig$ the {\it rigid cocenter} and $\bar H_R^{\text{nrig}}$ the {\it non-rigid part of cocenter}. We have $$\bar H_R^\rig=\oplus_{\nu \in \aleph; M_\nu=G} \bar H_R(\nu), \qquad \bar H_R^{\text{nrig}}=\oplus_{\nu \in \aleph; M_\nu \neq G} \bar H_R(\nu).$$

By the Newton decomposition on $\bar H_R$ (Theorem \ref{newton-h}), we have $$\bar H_R=\bar H_R^\rig \oplus \bar H_R^{\text{nrig}}.$$

We denote by $J(G)_\rig$ the set of $(\th, \omega)$-invariant distributions supported in $G_\rig$ and $J(G)_{\text{nrig}}$ the set of $(\th, \omega)$-invariant distributions supported in $G_{\text{nrig}}$. We have $$J(G)_\rig=(\bar H_R^\rig)^*, \qquad J(G)_{\text{nrig}}=(\bar H_R^{\text{nrig}})^* \quad \text{ and } \quad J(G)=J(G)_\rig \oplus J(G)_{\text{nrig}}.$$

The main purpose of this section is to give an explicit description of the rigid cocenter. In a future work \cite{hecke-2}, we will establish the Bernstein-Lusztig type generators of the cocenter, and realize the non-rigid cocenter as a direct sum of $+$-rigid parts of cocenters of proper Levi subgroups.

\subsection{} We first study $J(G)_\rig$. 

Let $\CP$ be a standard parahoric subgroup of $G$, i.e. a parahoric subgroup of $G$ containing $\CI$. Let $\t \in \Omega$. We say that $(\CP, \t)$ is a {\it standard pair} if $\dot \t \th(\CP) \dot \t \i=\CP$. Let $W_{\CP}$ be the (finite) Weyl group of $\CP$. Then the conjugation action of $\t$ induces a length-preserving automorphism on $W_{\CP}$. We denote by $\text{StP}$ the set of all standard pairs.

We have that 

\begin{proposition}\label{rig-g}
The rigid part of $G$ equals $\cup_{(\CP, \t) \in \text{StP}} \, G \cdot_\th \CP \dot \t$. 
\end{proposition}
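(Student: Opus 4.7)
The plan is to prove the two inclusions of Proposition~\ref{rig-g} separately, combining the description of Newton strata via standard triples (the Remark after Theorem~\ref{str}) with a fixed-point argument in the reduced apartment $\sA/V^{W_0}$.

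First I would establish $G_\rig\subseteq\bigcup_{(\CP,\tau)\in\text{StP}}G\cdot_\theta\CP\dot\tau$. Pick $\nu\in\aleph$ with $M_\nu=G$, so the $V$-factor $v$ of $\nu$ lies in $V^{W_0}$. Any point of $G(\nu)$ lies in $G\cdot_\theta\CI\dot u\dot x\CI$ for some standard triple $(x,K,u)$ with $\pi(x)=\nu$ and $ux\in\tW_{\min}$. The crucial step is to deduce $x\in\Omega$ from rigidity: since $x$ is $\theta$-straight, for any $n$ with $(x\theta)^n=t^\lambda$ we have $n\ell(x)=\ell(t^\lambda)$, and as $\bar\lambda=nv\in V^{W_0}$ the translation $t^\lambda$ preserves $\fka_C$ (every root vanishes on $V^{W_0}$), giving $\ell(t^\lambda)=0$ and hence $\ell(x)=0$. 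The standard-triple requirement $\Ad(x)\theta(K)=K$ then reads $\dot x\theta(\CP_K)\dot x^{-1}=\CP_K$, so $(\CP_K,x)$ is a standard pair; as $x\in\Omega$ normalizes $\CI$ and $u\in W_K$, one obtains $\CI\dot u\dot x\CI=(\CI\dot u\CI)\dot x\subseteq\CP_K\dot x$.

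For the reverse inclusion, fix a standard pair $(\CP_K,\tau)$ and $w\in W_K$. Using the decomposition $\CP_K\dot\tau=\bigsqcup_{w\in W_K}\CI\dot w\dot\tau\CI$, it suffices to verify $\bar\nu_{w\tau}\in V^{W_0}$. Consider the affine action of $w\tau\theta$ on $\sA$: because $w\in W_K$ fixes the facet $F_K$ (the facet of $\fka_C$ corresponding to $\CP_K$) pointwise, and $\tau\theta$ preserves $F_K$ as a set (the standard-pair condition says $\tau\theta$ normalizes $\CP_K$), the composite $w\tau\theta$ preserves $F_K$. Passing to the quotient $\sA/V^{W_0}$, in which $\fka_C$ (and hence $F_K$) has bounded image, Brouwer's fixed point theorem supplies a fixed point of $w\tau\theta$ on the image of $F_K$. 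Consequently, for any $N$ making $(w\tau\theta)^N=t^\lambda$ a pure translation (which exists because the linear part of $w\tau\theta$ belongs to the finite group $W_0\rtimes\langle\theta\rangle$), the image of $\bar\lambda$ in $V/V^{W_0}$ vanishes. Hence $\bar\lambda\in V^{W_0}$ and $\bar\nu_{w\tau}=\bar\lambda/N$ is central, so $G\cdot_\theta\CI\dot w\dot\tau\CI\subset G_\rig$.

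The main obstacle I anticipate is the affine-geometric step in the second direction: one must align the existence of a fixed point in $\sA/V^{W_0}$ with the algebraic definition of the Newton point via $(w\theta)^n=t^\lambda$, and justify that $F_K$ has bounded image modulo $V^{W_0}$ (this uses the finiteness of $W_K$, equivalently that $K$ omits a simple reflection in each connected component of the affine Dynkin diagram). The first direction then reduces to the clean length-theoretic identity $\ell(t^\lambda)=0\iff\bar\lambda\in V^{W_0}$.
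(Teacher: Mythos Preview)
Your first inclusion is correct and essentially matches the paper: the paper cites \cite[Corollary~2.8]{HN1} to place any $w\in\tW_{\min}$ with central Newton point inside some $W_{\CP}\tau$, and you recover this from the standard-triple description together with the observation that a straight element with central Newton point has length zero.

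The second inclusion has a genuine gap. You assert that ``it suffices to verify $\bar\nu_{w\tau}\in V^{W_0}$'', but the Newton point $\bar\nu_{w\tau}$ of a non-minimal element does \emph{not} control which Newton strata the cell $\CI\dot w\dot\tau\CI$ meets. Already for $SL_2$ with $\th=\id$, the reflection $w=s_0s_1s_0$ satisfies $\bar\nu_w=0$, yet \S\ref{DL-red}(2) gives $G\cdot\CI\dot w\CI\supset G\cdot\CI\dot s_1\dot s_0\CI$, and $s_1s_0=t^{-\alpha^\vee}$ is a straight translation with non-central Newton point; hence $\CI\dot w\CI\not\subset G_\rig$. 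So your sufficiency claim is not a formal consequence of the definitions. The paper supplies two steps you omit: (a) run the reduction of \S\ref{DL-red} inside $\CP_K\dot\tau$ using only $s\in K$ (possible because $\tau\th$ normalizes $K$), reaching elements $w'\tau$ that are minimal in their $W_K$-twisted class; (b) show by a direct length estimate, writing an arbitrary $x\in\tW$ as $x_1x_2$ with $x_1\in{}^K\tW$ and $x_2\in W_K$, that every such $w'\tau$ actually lies in $\tW_{\min}$. Only after (b) does centrality of $\bar\nu_{w'\tau}$ place $\CI\dot{w'}\dot\tau\CI$ inside some $X_\nu$ with $\nu$ rigid. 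Your Brouwer argument is a valid alternative to the paper's bare assertion that $\pi(w\tau)=\pi(\tau)$ for $W_K$-minimal $w$, but it cannot replace steps (a) and (b).
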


\begin{proof}
Let $(\CP, \t)$ be a standard pair. Let $n$ be a positive integer such that $(\t \th)^n=t^\l$. Since $\t \in \Omega$, $t^\l \in \Omega$ and thus $\<\l, a\>=0$ for any root $a$ of $G$. Therefore $M_\l=G$ and $M_{\pi(\t)}=G$. Note that $\dot \t \th(\CP) \dot \t \i=\CP$. Similar to the proof of Proposition \ref{fini}, we have $$\CP \dot \t=\cup_w \CP \cdot_\th \CI \dot w \dot \t \CI,$$ where $w \in W_{\CP}$ such that $w \t$ is of minimal length in $\{x w \t \th(x) \i; x \in W_{\CP}\}$. Let $w$ be such an element. Then it is easy to see that $\pi(w \t)=\pi(\t)$. Moreover, for any $x' \in \tW$, we may write $x'$ as $x'=x_1 x_2$ for $x_1 \in \tW^{\CP}$ and $x_2 \in W_{\CP}$. Then $x_2 w \t \th(x_2) \i \in W_{\CP} \t$ and \begin{align*} \ell(x_1 x_2 w \t \th(x_2) \i \th(x_1) \i) & \ge \ell(x_1 x_2 w \t \th(x_2) \i)-\ell(x_1) \\ &=\ell(x_1)+\ell(x_2 w \t \th(x_2) \i)-\ell(x_1) \\ &=\ell(x_2 w \t \th(x_2) \i) \ge \ell(w \t). \end{align*} Hence $w \t \in \tW_{\min}$. Thus we have $$\CP \dot \t \subset G(\pi(\t)).$$ On the other hand, let $w \in \tW_{\min}$ with central Newton point, i.e. $\<\nu_w, a\>=0$ for all roots $a$ of $G$. Then by \cite[Corollary 2.8]{HN1}, we have $w \in W_{\CP} \t$ for some standard pair $(\CP, \t)$, where $W_{\CP}$ is the Weyl group of $\CP$. Then $\CI \dot w \CI \subset \CP \dot \t$ and $G \cdot_\th \CI \dot w \CI \subset G \cdot_\th \CP \dot \t$. Thus \[G_\rig=\cup_{(\CP, \t) \in \text{StP}} \, G \cdot_\th \CP \dot \t.\qedhere\]
\end{proof}

\subsection{} For any $(\CP, \t) \in \text{StP}$, we denote by $H_R(\CP \dot \t) \subset H_R$ the $R$-submodule consisting of functions supported in $\CP \dot \t$. Note that $\CP \dot \t$ is stable under the $\th$-twisted conjugation action of $\CP$. We denote by $J_{\CP}(\CP \t)$ the set of $(\CP, \th, \omega)$-invariant distributions on $\CP \dot \t$, i.e., the set of distributions $j$ on $\CP \dot \t$ such that $j(f)=j({}^p f)$ for any $p \in \CP$ and $f \in H_R(\CP \dot \t)$. Then it is easy to see that the restriction of any $(\th, \omega)$-invariant distribution on $G$ to $\CP \dot \t$ is $(\CP, \th, \omega)$-invariant. 

\begin{theorem}\label{j-str}
The restriction map $J(G) \to \oplus_{(\CP, \t) \in \text{StP}} J_{\CP}(\CP \dot \t)$ gives a bijection from $J(G)_\rig$ to the $R$-submodule of $\oplus_{(\CP, \t) \in \text{StP}} J_{\CP}(\CP \dot \t)$ consisting of the elements $(j_{(\CP, \t)})_{(\CP, \t) \in \text{StP}} \in \oplus J_{\CP} (\CP \dot \t)$ satisfying the condition 
\[\tag{*} \forall (\CP, \t), (\CQ, \g), x \in {}^\CP \tW^\CQ, (j_{\CP, \t} \mid_{\CP \dot \t \cap \dot x \CQ \dot \g \th(\dot x) \i})={}^{\dot x}(j_{\CQ, \g} \mid_{\CQ \dot \g\cap \dot x \i \CP \dot \t \th(\dot x)}).\]
\end{theorem}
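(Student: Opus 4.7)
My plan is to prove Theorem \ref{j-str} by dualizing an explicit presentation of the rigid cocenter $\bar H_R^\rig$ in terms of parahoric pieces. By Proposition \ref{rig-g}, $G_\rig = \bigcup_{(\CP,\t)\in\text{StP}} G \cdot_\th (\CP \dot \t)$, and by Corollary \ref{IM-h} together with \cite[Corollary 2.8]{HN1} (which places every minimal length element of $\tW$ with central Newton point inside some $W_\CP \t$), every class in $\bar H_R^\rig$ is represented by a finite sum of functions each supported in some $\CP \dot \t$. Dually, $J(G)_\rig$ should be computable as the equalizer of the $J_\CP(\CP\dot\t)$ modulo overlap relations between $G$-orbits of different parahoric pieces, with the overlaps organized by the double coset decomposition $G = \bigsqcup_{x \in {}^\CP \tW^\CQ} \CP \dot x \CQ$; this is what produces the indexing in condition (*).

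First I would verify that restriction lands in the claimed subspace. For $j \in J(G)_\rig$ and a standard pair $(\CP,\t)$, the defining identity $\dot \t \th(\CP) \dot \t^{-1} = \CP$ implies that $\CP \dot \t$ is stable under $\th$-twisted $\CP$-conjugation, so the restriction $j_{\CP,\t}$ lies automatically in $J_\CP(\CP\dot\t)$. For (*), any $f \in H_R$ supported in $\CP \dot \t \cap \dot x \CQ \dot \g \th(\dot x)^{-1}$ has ${}^{\dot x^{-1}} f$ supported in $\CQ \dot \g \cap \dot x^{-1} \CP \dot \t \th(\dot x)$, and then global $(\th,\omega)$-invariance of $j$ gives
$j_{\CP,\t}(f) = j(f) = j({}^{\dot x^{-1}} f) = j_{\CQ,\g}({}^{\dot x^{-1}} f) = ({}^{\dot x} j_{\CQ,\g})(f)$, which is (*). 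Injectivity of the restriction map then follows from Proposition \ref{rig-g}: a $G$-invariant distribution on $G_\rig$ is determined by its restrictions to any family of subsets whose $G$-orbits cover $G_\rig$.

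For surjectivity, given a compatible family $(j_{\CP,\t})$, I would build $j$ by setting $j(\varphi) = \sum_\a j_{\CP_\a,\t_\a}({}^{g_\a^{-1}} \varphi_\a)$ for a decomposition $\varphi = \sum_\a \varphi_\a$ in which each $\varphi_\a$ is supported in $g_\a \cdot_\th (\CP_\a \dot \t_\a)$; such decompositions exist by the admissibility of the Newton strata (\S\ref{adm}) together with Proposition \ref{fini}. The two things to check are independence of the decomposition and $(\th,\omega)$-invariance of the resulting $j$; both reduce, after sliding pieces back into actual $\CP \dot \t$'s via the $G$-action, to applying (*) double coset by double coset.

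The main obstacle will be this well-definedness step. The key combinatorial fact is that for standard pairs $(\CP,\t),(\CQ,\g)$, the intersections $\CP \dot \t \cap \dot x \CQ \dot \g \th(\dot x)^{-1}$ as $x$ ranges over ${}^\CP \tW^\CQ$, together with $\CP$- and $\CQ$-conjugation, account for all overlap data between the $G$-orbits $G \cdot_\th (\CP \dot \t)$ and $G \cdot_\th (\CQ \dot \g)$. Once this is established via $G = \bigsqcup_{x\in{}^\CP\tW^\CQ} \CP \dot x \CQ$ and the finiteness controls of Corollary \ref{finite-w}, the relations generated by these $x$ span the kernel of the natural surjection $\bigoplus_{(\CP,\t)} H_R(\CP \dot \t)_{\CP,\th,\omega} \twoheadrightarrow \bar H_R^\rig$, and dualizing completes the proof.
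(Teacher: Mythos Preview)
Your direct construction in the middle paragraph---decompose $\varphi$ into pieces supported in single conjugates $g_\a\cdot_\th(\CP_\a\dot\t_\a)$ and set $j(\varphi)=\sum_\a j_{\CP_\a,\t_\a}({}^{g_\a^{-1}}\varphi_\a)$---is essentially the paper's approach, which phrases the same thing sheaf-theoretically (define $j\mid_U$ for small open $U$ via a choice of $h$ with $hU\th(h)^{-1}\subset\CP\dot\t$, then glue). The well-definedness check is also the same: write the comparison element as $p\dot x q$ using $G=\bigsqcup_{x\in{}^\CP\tW^\CQ}\CP\dot x\CQ$, then apply $\CP$-invariance of $j_{\CP,\t}$, condition (*), and $\CQ$-invariance of $j_{\CQ,\g}$ in turn. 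So the core of your argument is correct and matches the paper.

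Where you should be careful is the framing as ``dualizing an explicit presentation of $\bar H_R^\rig$'' and the final claim that the relations generated by the $x\in{}^\CP\tW^\CQ$ \emph{span the kernel} of $\bigoplus_{(\CP,\t)} H_R(\CP\dot\t)_{\CP,\th,\omega}\twoheadrightarrow\bar H_R^\rig$. That kernel statement is precisely Theorem~\ref{relation}, which in the paper is proved \emph{after} Theorem~\ref{j-str}, using Theorem~\ref{j-str} together with Howe's finiteness (Theorem~\ref{howe-1}) to reduce to a finite-dimensional situation where one can double-dualize. For Theorem~\ref{j-str} itself you do not need to know the whole kernel: the pointwise well-definedness check on a single small piece suffices and is an elementary three-line computation. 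Your appeal to Corollary~\ref{finite-w} is a red herring here; no finiteness enters the proof of Theorem~\ref{j-str}. If you actually tried to establish the presentation first and then dualize, you would either need a genuinely new argument for $K=K'$ over a general $\BZ[\frac{1}{p}]$-algebra $R$, or you would be implicitly reproving Howe's conjecture. Drop the kernel claim from this proof and keep it for Theorem~\ref{relation}.
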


\begin{proof}
For any standard pair $(\CP, \t)$, we denote by $H_R(\CP \dot \t)$ the subset of $H_R$ consisting of functions supported in $\CP \dot \t$. By Proposition \ref{rig-g}, we have a surjection \[\tag{a} p_\rig: \oplus_{(\CP, \t) \in \text{StP}} H_R(\CP \dot \t) \twoheadrightarrow \bar H_R^\rig.\]
Therefore the restriction map $j \mapsto (j \mid_{\CP \dot \t})_{(\CP, \t) \in \text{StP}}$ from $J(G)_\rig$ to the direct sum of distributions on $\CP \dot \t$ is injective. It is also easy to see that $j \mid_{\CP \dot \t}$ is $(\CP, \th, \omega)$-invariant. Moreover, for all standard pairs $(\CP, \t), (\CQ, \g)$ and $x \in {}^{\CP} \tW^{\CQ}$, the $\th$-twisted conjugation action by $\dot x$ sends $\CQ \g \cap \dot x \i \CP \dot \t \th(\dot x)$ to $\CP \dot \t \cap \dot x \CQ \dot \g \th(\dot x) \i$. Thus $j \mid_{\CP \dot \t \cap \dot x \CQ \dot \g \th(\dot x) \i}={}^{\dot x}(j \mid_{\CQ \dot \g\cap \dot x \i \CP \dot \t \th(\dot x)})$.

On the other hand, given $(j_{(\CP, \t)})_{(\CP, \t) \in \text{StP}} \in \oplus J_{\CP} (\CP \dot \t)$ satisfying the condition $(*)$, we construct a distribution $j \in J(G)_\rig$ using the sheaf-theoretic description of distributions.  

First, we set $j \mid_{G_{\text{nrig}}}=0$. For any $g \in G_\rig$, by Proposition \ref{rig-g} we may choose a small neighborhood $U$ of $g$ such that $U \subset G_\rig$ and that there exists $h \in G$ with $h U \th(h) \i \subset \CP \dot \t$ for some standard pair $(\CP, \t)$. We define \[j \mid_U={}^{h \i} (j_{(\CP, \t)} \mid_{h U \th(h) \i}).\]

It remains to show that 

(b) The family $(j \mid_U)$ we obtained above is independent of the choice of $h$ and $(\CP, \t)$. 

Once (b) is proved, we automatically have $(j \mid_U) \mid_{U \cap U'}=(j \mid_{U'}) \mid_{U \cap U'}$ for any open subsets $U, U'$ of $G_\rig$, and thus the family $(j \mid_U)$ defines a distribution $j$ of $G$ supported in $G_\rig$. The $(\th, \omega)$-invariant condition for $j$ follows from the fact that $(j \mid_U)$ is independent of the choice of $h$. 

Now we prove (b). Let $U, U'$ be small neighborhoods of $g$ and $h, h' \in G$ with $h U \th(h) \i \subset \CP \dot \t$ and $h' U' \th(h') \i \subset \CQ \dot \g$. After $\th$-twisted conjugation, we may and do assume that $h'=1$. Then $U' \subset \CQ \dot \g$ and $\CQ \dot \g \cap h \i \CP \dot \t \th(h) \neq \emptyset$. 

We write $h$ as $h=p \dot x q$ for $p \in \CP$, $x \in {}^{\CP} \tW^{\CQ}$ and $q \in \CQ$. Then \begin{align*}
{}^{h \i} (j_{(\CP, \t)} \mid_{h U \th(h) \i}) &={}^{q \i \dot x \i}  (j_{(\CP, \t)} \mid_{\dot x q U \th(\dot x q) \i}) \quad \text{ as $j_{(\CP, \t)}$ is $(\CP, \th, \omega)$-invariant} \\ &={}^{q \i} (j_{(\CQ, \g)} \mid_{q U \th(q) \i}) \quad \text{ by the condition (*)} \\ &=j_{(\CQ, \g)} \mid_U \qquad \text{ as $j_{(\CQ, \g)}$ is $(\CQ, \th, \omega)$-invariant}.
\end{align*} 
This finishes the proof.
\end{proof}

\subsection{}\label{pqx} Let $(\CP, \t), (\CQ, \g)$ be standard pairs and $x \in {}^{\CP} \tW^{\CQ}$. We define $$H_{(\CP, \t), (\CQ, \g), x}=\{(f, -{}^{\dot x} f); f \in H_R(\CP \dot \t) \text{ with support in } \CP \dot \t \cap \dot x \CQ \dot \g \th(\dot x) \i\}.$$
We call it the $(\CP, \CQ, x)$-graph in $H_R(\CP \dot \t) \oplus H_R(\CQ \dot \g)$. 

We have seen in the proof of Theorem \ref{j-str} that $\bar H_R^\rig$ is generated by $H_R(\CP \dot \t)$. Since $\dot \t \th(\CP) \dot \t \i=\CP$, we have $[H_R(\CP), H_R(\CP \dot \t)]_{\th, \omega} \subset H_R(\CP \dot \t)$. This gives some relations in $\bar H_R^\rig$. Now we show that the remaining relations in $\bar H_R^\rig$ are given by the $(\CP, \CQ, x)$-graphs.  

\begin{theorem}\label{relation}
The kernel of the surjective map $$p_{\rig}: \oplus_{(\CP, \t) \in \text{StP}} H_R(\CP \dot \t) \twoheadrightarrow \bar H_R^\rig$$ is spanned by $[H_R(\CP), H_R(\CP \dot \t)]_{\th, \omega} \subset H_R(\CP \dot \t)$ and $H_{(\CP, \t), (\CQ, \g), x} \subset H_R(\CP \dot \t) \oplus H_R(\CQ \dot \g)$.
\end{theorem}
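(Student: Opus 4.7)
I split the proof into the two containments $N\subset\ker p_\rig$ and $\ker p_\rig\subset N$, where $N$ denotes the span of the two families of relations in the statement.

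The first containment is essentially definitional. The twisted commutators $[H_R(\CP), H_R(\CP\dot\t)]_{\th,\omega}$ are honest twisted commutators in $H_R$; the standard-pair condition $\dot\t\,\th(\CP)\dot\t\i=\CP$ keeps their support inside $\CP\dot\t$, so they vanish in $\bar H_R$. For a graph element $(f, -{}^{\dot x}f)\in H_{(\CP,\t),(\CQ,\g),x}$ the image under $p_\rig$ is $f-{}^{\dot x}f$, which lies in $[H_R,H_R]_{\th,\omega}$ by Proposition \ref{comm} and so vanishes as well.

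For the reverse containment, set $Q:=\bigl(\bigoplus_{(\CP,\t)\in\text{StP}}H_R(\CP\dot\t)\bigr)/N$, so that $p_\rig$ factors as a surjection $\pi\colon Q\twoheadrightarrow\bar H_R^\rig$. The crucial step is the duality claim that the transposed map $\pi^*\colon\Hom_R(\bar H_R^\rig,R)\to\Hom_R(Q,R)$ is an isomorphism, pivoting on Theorem \ref{j-str}. The left-hand side equals $J(G)_\rig$ by definition, while the right-hand side consists of tuples $(\phi_{\CP,\t})\in\bigoplus_{(\CP,\t)}\Hom_R(H_R(\CP\dot\t),R)$ that annihilate $N$. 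Killing $[H_R(\CP), H_R(\CP\dot\t)]_{\th,\omega}$ is precisely $(\CP,\th,\omega)$-invariance of $\phi_{\CP,\t}$, i.e., $\phi_{\CP,\t}\in J_\CP(\CP\dot\t)$; killing the $(\CP,\CQ,x)$-graphs unwinds, after unpacking the definition of ${}^{\dot x}f$ and tracking supports, into precisely the compatibility condition $(*)$ of Theorem \ref{j-str}. Thus the two submodules of $\bigoplus_{(\CP,\t)}J_\CP(\CP\dot\t)$ coincide, and $\pi^*$ is a bijection.

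The remaining step is to upgrade the isomorphism of duals to an isomorphism of modules, and this is where the main technical work lies: over an integral coefficient ring $R$ this is not automatic. My plan is to restrict everything to a fixed open compact subgroup $\CI_n$ and one Newton point $\nu$ with $M_\nu=G$ at a time. By Theorem \ref{newton-hn} combined with Corollary \ref{finite-w}, the $(\CI_n,\nu)$-component of $\bar H_R^\rig$ is finitely generated by Iwahori--Matsumoto elements, and its dual is finitely generated by Howe's conjecture (Theorem \ref{howe-1}); by Proposition \ref{rig-g} only finitely many standard pairs $(\CP,\t)$ contribute at any given $(n,\nu)$, so the corresponding component of $Q$ is also finitely generated. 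In this finite-rank setting the duality isomorphism forces $\pi$ to be injective at each $(n,\nu)$; taking the direct limit in $n$ and the direct sum over central Newton points yields the global result. The main obstacle is the finite-level bookkeeping: one must match the finitely many IM generators on the $\bar H_R^\rig$ side with the finitely many $(\CP,\t)$-contributions on the $Q$ side so that their interplay faithfully reflects the finite-level geometry of $G_\rig$, and Proposition \ref{rig-g} is the essential bridge that makes this matching transparent.
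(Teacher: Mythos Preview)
Your proposal follows essentially the paper's strategy: the easy containment is definitional, and the hard containment dualizes via Theorem~\ref{j-str} and then passes to a finitely generated situation in order to lift the isomorphism of duals back to an isomorphism of modules. The paper organizes the finiteness reduction slightly differently---fixing a finite $\th$-stable set $\Omega_0\subset\Omega$ and a level $n$, then cutting out the image of $J(G)_\rig$ by a \emph{finite} set $A$ of graph constraints rather than decomposing by Newton point---but the substance is the same, and your explicit flagging of the duality step as ``not automatic'' over a general ring $R$ is if anything more careful than the paper's appeal to ``finite dimensional'' spaces.
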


\begin{proof}
By definition, $[H_R(\CP), H_R(\CP \dot \t)]_{\th, \omega}$ and $H_{(\CP, \t), (\CQ, \g), x}$ are contained in the kernel of $p_{\rig}$. 

Now we prove the other direction. Let $(f_{(\CP, \t)}) \in \oplus H_R(\CP \dot \t)$ with $\sum f_{(\CP, \t)}=0$ in $\bar H_R$. Let $\Omega'=\{\t \in \Omega; f_{(\CP, \t)} \neq 0 \text{ for some } \CP\}$ and let $\Omega_0=\<\th\> \cdot \Omega'$ be the smallest $\th$-stable subset of $\Omega$ that contains $\Omega'$. Since the action of $\th$ on $\Omega$ is of finite order, $\Omega_0$ is a finite subset of $\Omega$. Let $\text{StP}_0=\{(\CP, \t) \in \text{StP}; \t \in \Omega_0\}$. As there are only finitely many standard parahoric subgroups, $\text{StP}_0$ is a finite subset of $\text{StP}$. By definition, there exists $n \in \BN$ such that $f_{(\CP, \t)} \in H_R(\CP \dot \t, \CI_n)$ for any $(\CP, \t) \in \text{StP}_0$. For any standard pair $(\CP, \t)$, $H_R(\CP \dot \t, \CI_n)$ is finite dimensional. In particular, 

(a) The restriction $\oplus_{(\CP, \t) \in \text{StP}_0} J_{\CP}(\CP \dot \t) \mid_{H_R(\CP \dot \t, \CI_n)}$ is finite dimensional. 

Let $G_\rig^0=\{g \in G_\rig; \k(g) \in \Omega_0\}$ and $G_\rig^1=G_\rig-G_\rig^0$. Let $J(G)_\rig^0$ and $J(G)_\rig^1$ be the set of $(\th, \omega)$-invariant distributions supported in $G_\rig^0$ and $G_\rig^1$, respectively. We define $(\bar H_R^\rig)_0$ and $(\bar H_R^\rig)_1$ in a similar way. Then $$J(G)_\rig=J(G)_\rig^0 \oplus J(G)_\rig^1, \bar H_R^\rig=(\bar H_R^\rig)_0 \oplus (\bar H_R^\rig)_1, \text{ and } J(G)_\rig^0=(\bar H_R^\rig)_0^*.$$

Since $\Omega_0$ is $\th$-stable, for any standard pair $(\CP, \t)$, $\CP \dot \t \subset G_\rig^0$ if $(P, \t) \in \text{StP}_0$ and $P \dot \t \subset G_\rig^1$ if $(\CP, \t) \notin \text{StP}_0$. Thus the image of $\oplus_{(\CP, \t) \in \text{StP}_0} H_R(\CP \dot \t) \to \bar H_R^\rig$ equals $(\bar H_R^\rig)_0$ and the restriction map $J(G)_\rig^0 \to J(\CP \dot \t)$ equals $0$ for any $(\CP, \t) \notin \text{StP}_0$. 

By Theorem \ref{j-str} and (a) above, the map $J(G)_\rig^0 \to \oplus_{(\CP, \t) \in \text{StP}_0} J_{\CP}(\CP \dot \t)$ is injective and there exists a finite subset $A$ of the $5$-tuples $(\CP, \t, \CQ, \g, x)$ with $(\CP, \t), (\CQ, \g) \in \text{StP}_0$ and $x \in {}^{\CP} \tW^{\CQ}$ such that 
$$\Im(J(G)_\rig \to \oplus_{(\CP, \t) \in \text{StP}_0}  J_{\CP}(\CP \dot \t) \mid_{H_R(\CP \dot \t, \CI_n)})=V \mid_{\oplus_{(\CP, \t) \in \text{StP}_0}  H_R(\CP \dot \t, \CI_n)},$$ where $V$ is the subspace of $\oplus_{(\CP, \t) \in \text{StP}_0} J_{\CP}(\CP \dot \t)$ is defined by the equations \[\tag{b} (j_{\CP, \t} \mid_{\CP \dot \t \cap \dot x \CQ \dot \g \th(\dot x) \i})={}^{\dot x}(j_{\CQ, \g} \mid_{\CQ \dot \g\cap \dot x \i \CP \dot \t \th(\dot x)}) \text{ for } (\CP, \t, \CQ, \g, x) \in A.\]

Therefore $V \mid_{\oplus_{(\CP, \t) \in \text{StP}_0}  H_R(\CP \dot \t, \CI_n)}=\Im(\oplus_{(\CP, \t) \in \text{StP}_0}  H_R(\CP \dot \t, \CI_n) \to \bar H_R^\rig)^*$. Since both spaces are finite dimensional, the kernel of the map $\oplus_{(\CP, \t) \in \text{StP}_0} H_R(\CP \dot \t, \CI_n) \to \bar H_R^\rig)$ consist precisely of the elements vanishing on $V$. 

Notice that for any standard pair $(\CP, \t)$, $$J_{\CP}(\CP \dot \t)=(H_R(\CP \dot \t)/[H_R(\CP), H_R(\CP \dot \t)]_{\th, \omega})^*.$$  So the elements of $\oplus_{(\CP, \t) \in \text{StP}_0} H_R(\CP \dot \t, \CI_n)$ vanishing on  $\oplus_{(\CP, \t) \in \text{StP}_0} J_{\CP}(\CP \dot \t)$ are exactly $\oplus_{(\CP, \t) \in \text{StP}_0} [H_R(\CP), H_R(\CP \dot \t)]_{\th, \omega} \cap H_R(\CP \dot \t, \CI_n)$.

The subspace $V$ of $\oplus_{(\CP, \t) \in \text{StP}_0} J_{\CP}(\CP \dot \t)$ is defined by the equations in (b). Taking the dual, we see that the elements vanishing on $V$ are spanned by $$\oplus_{(\CP, \t) \in \text{StP}_0}[H_R(\CP), H_R(\CP \dot \t)]_{\th, \omega} \cap H_R(\CP \dot \t, \CI_n) \text{ and } \sum_{(\CP, \t, \CQ, \g, x) \in A} H_{(\CP, \t), (\CQ, \g), x}.\qedhere$$ \end{proof}

\end{document}